\newtheorem{Theorem}{Theorem}[section]
\newtheorem{Lemma}{Lemma}[section]
\newtheorem{Definition}{Definition}[section]
\newtheorem{Remark}{Remark}[section]
\newtheorem{Proposition}{Proposition}[section]
\DeclareMathOperator{\divv}{div}
\providecommand{\norm}[1]{\left\Vert#1\right\Vert}
\newcounter{RomanNumber}
\def\be{\begin{equation}}
\def\en{\end{equation}}
\def\bs{\begin{split}}
\def\es{\end{split}}
\title{Long-time behavior to the 3D isentropic compressible Navier--Stokes equations
\thanks{Wu's research was partially supported by Natural Science Foundation of
Fujian Province and Fujian Alliance of Mathematics (Nos. 2022J01304 and 2023SXLMMS08). Zhong's research was partially supported by Fundamental Research Funds for the Central Universities (No. SWU--KU24001) and National Natural Science Foundation of China (No. 12371227).}
}
\author{Guochun Wu$\,^{\rm 1}\,$,\
Xin Zhong$\,^{\rm 2}\,$ {\thanks{E-mail addresses:
guochunwu@126.com (G. Wu), xzhong1014@amss.ac.cn (X. Zhong).}}\date{}\\
\footnotesize $^{\rm 1}\,$
School of Mathematics and Statistics, Xiamen University of Technology, Xiamen 361024, P. R. China\\
\footnotesize $^{\rm 2}\,$ School of Mathematics and Statistics, Southwest University, Chongqing 400715, P. R. China}
\begin{document}
\maketitle

\begin{abstract}
We are concerned with the long-time behavior of classical solutions to the isentropic compressible Navier--Stokes equations in $\mathbb R^3$. Our main results and innovations can be stated as follows:
\begin{itemize}
\item Under the assumption that the density $\rho({\bf{x}}, t)$ verifies $\rho({\bf{x}},0)\geq c>0$ and $\sup_{t\geq 0}\|\rho(\cdot,t)\|_{L^\infty}\leq M$, we establish the optimal decay rates of the solutions. This greatly improves the previous result (Arch. Ration. Mech. Anal. 234 (2019), 1167--1222), where the authors require an extra hypothesis $\sup_{t\geq 0}\|\rho(\cdot,t)\|_{C^\alpha}\leq M$ with $\alpha$ arbitrarily small.
\item We prove that the vacuum state will persist for any time provided that the initial density contains vacuum and the far-field density is away from vacuum, which extends the torus case obtained in (SIAM J. Math. Anal. 55 (2023), 882--899) to the whole space.
\item We derive the decay properties of the solutions with vacuum as far-field density. This in particular gives the first result concerning the $L^\infty$-decay with a rate $(1+t)^{-1}$ for the pressure to the 3D compressible Navier--Stokes equations in the presence of vacuum.
\end{itemize}
The main ingredient of the proof relies on the techniques involving blow-up criterion, a key time-independent positive upper and lower bounds of the density, and a regularity interpolation trick.
\end{abstract}

\textit{Key words and phrases}. Compressible Navier--Stokes equations; long-time behavior; vacuum.

2020 \textit{Mathematics Subject Classification}. 76N06; 76N10.

\tableofcontents

\section{Introduction}

\subsection{Background and motivation}

We study the Cauchy problem of the isentropic compressible Navier--Stokes equations in $\mathbb{R}^3$:
\begin{align}\label{1.1}
\begin{cases}
\rho_{t}+\divv(\rho\mathbf{u})=0,\\
(\rho\mathbf{u})_{t}+\divv(\rho\mathbf{u}\otimes\mathbf{u})
-\mu\Delta\mathbf{u}-(\mu+\lambda)\nabla\text{div}\mathbf{u}+\nabla P(\rho)=0,
\end{cases}
\end{align}
with the initial condition
\begin{align}\label{1.2}
(\rho,\rho\mathbf{u})(\mathbf{x},0)
=(\rho_0,\rho_0\mathbf{u}_0)({\bf x}),\ \ {\bf x}\in\mathbb{R}^3,
\end{align}
and the far-field behavior
\begin{align}\label{1.3}
(\rho(\mathbf{x},t), \mathbf{u}(\mathbf{x},t))\rightarrow(\tilde{\rho},\mathbf{0}),\ \ \text{as}\ |\mathbf{x}|\rightarrow\infty,\ t>0,
\end{align}
for a non-negative constant $\tilde{\rho}$. Here $\rho$, $\mathbf{u}=(u^1,u^2,u^3)$, and $P=\rho^\gamma\ (\gamma>1)$ denote the density, velocity, and pressure of the fluid, respectively. The constants $\mu$ and $\lambda$ are the shear viscosity and the bulk viscosity of the fluid satisfying the physical restrictions
\begin{align*}
\mu>0\ \ \text{and}\ \ 2\mu+3\lambda\geq 0.
\end{align*}

There is a huge literature on the global existence (well-posedness) of solutions to the multi-dimensional compressible isentropic Navier--Stokes equations. When the initial density is away from a vacuum, the global well-posedness of strong solutions in three-dimensional (3D) domains was first obtained by Matsumura and Nishida \cite{Matsumura3,Matsumura2}. With the help of the \textit{effective viscous flux}, Hoff \cite{Hoff1,Hoff2} proved the global existence of non-vacuum solutions with discontinuous initial data, while Danchin \cite{Dan1} investigated the global existence and uniqueness of strong solutions in the critical Besov spaces (see also \cite{Dan3,CMZ,H11} for the $L^p$ setting). It should be pointed out that the results stated above require smallness of the initial oscillation on the perturbation of initial data near the non-vacuum equilibrium state. This assumption concerning the small oscillations on the initial perturbation of a non-vacuum state is recently removed by Huang--Li--Xin \cite{HuangLi1}, where they proved the global existence and uniqueness of classical solutions with smooth initial data that are of small energy in $\mathbb{R}^3$ (see also \cite{CL23} for the case of 3D bounded domains). The major breakthrough on the solutions with large initial data and vacuum is due to Lions \cite{Lions}, where he used the renormalization skills introduced by DiPerna and Lions \cite{DiPerna1} to establish global weak solutions in $\mathbb{R}^n\ (n=2,3)$ provided that $\gamma\geq\frac{3n}{n+2}$. Later on, Feireisl--Novotn\'y--Petzeltov\'a \cite{Feireisl} improved Lions' result to the case $\gamma>\frac{n}{2}$. At the same time, Jiang and Zhang \cite{Jiang-Zhang 2001, Jiang1} derived global weak solutions for any $\gamma>1$ when the initial data are assumed to have some spherically symmetric or axisymmetric properties. Nonetheless, due to the possible concentration of finite kinetic energy in very small domains, it still seems to be a challenge to show the global existence of weak solutions with general 3D data for $\gamma \in (1,\frac{n}{2}]$. There are also very interesting investigations about global large strong solutions in some sense for multi-dimensional compressible isentropic Navier--Stokes equations, please refer to \cite{DM17,DM23,HPZ24,FZ18,ZLZ20} and references therein.

It is also of great interest to study the large-time behavior of solutions. The readers can refer to \cite{Dan4,Duan1,HZ95,KK02,KK05,LiuWang,Matsumura3,Matsumura2} and references therein for large-time behavior of global smooth solutions to the compressible Navier--Stokes system under the smallness assumption on the initial perturbation. Some important progress has been made about long time behavior of large solutions for multi-dimensional compressible Navier--Stokes equations by many authors. Feireisl and Petzeltov\'a \cite{Feireisl2} first showed that any weak solution converges to a fixed stationary state as time goes to infinity via the weak convergence method. Under the assumptions that the density is essentially bounded and has uniform-in-time positive lower bound, Padula \cite{Padula} proved that weak solutions decay exponentially to the equilibrium state in $L^2$-norm. By using the Bogovskii operator and constructing a suitable Lyapunov functional, Peng--Shi--Wu \cite{PSW} improved the result in \cite{Padula} to the case that they didn't need the time-independent upper and lower bounds of density. He--Huang--Wang \cite{He} investigated the global stability of large strong solutions to the 3D Cauchy problem. More precisely, under the hypothesis that the density $\rho({\bf x},t)$ verifies $\inf_{{\bf x}\in\mathbb R^3}\rho_0({\bf x})\ge c_0>0$ and $\sup_{t\ge0}\|\rho(t)\|_{C^\alpha}\le M$ with arbitrarily small $\alpha$, they established the convergence of the solutions to its associated equilibrium states with an explicit decay rate which is the same as that of the heat equation. It should be emphasized that the assumption $\sup_{t\ge0}\|\rho(t)\|_{C^\alpha}\le M$ plays an essential role in their analysis (see the proof of Proposition 2.3 in \cite{He} for details).

As mentioned in many papers (refer to \cite{HuangLi1,CL23,Desjardins,Sun,Hu,Hu1,Merle1,Merle2} for instance), one of the main difficulties for the stability of solutions to the compressible isentropic Navier--Stokes equations is the concentration of the density. In particular, Desjardins \cite{Desjardins} studied the regularity of weak solutions for small time under periodic boundary conditions, and particularly showed that weak solutions in $\mathbb T^2$ turn out to be smooth as long as the density remains bounded in $L^\infty(\mathbb T^2)$. Sun--Wang--Zhang \cite{Sun} proved that the concentration of the density must be responsible for the loss of the regularity of the strong solution in finite time. Hu \cite{Hu,Hu1} investigated the concentration phenomenon of weak solutions with finite energy, and showed that the concentration set has a parabolic Hausdorff dimension related to $\gamma$ by a provided formula. More recently, Merle--Rapha\"el--Rodnianski--Szeftel \cite{Merle1,Merle2} constructed a set of finite energy smooth initial data for which the corresponding solutions to the compressible 3D Navier--Stokes and Euler equations implode (with infinite density) at a later time at a point, and completely described the associated formation of singularity. Then a natural question, but of importance and interest, is that can we show the long-time behavior of global classical solutions to the system \eqref{1.1} with {\it bounded density}?
The main goal of the present paper is to give an affirmative response to this problem.

\subsection{Main results}

Before stating our main result, we first explain the notations and conventions used throughout this paper. We denote by $C$ a positive generic constant which may vary at different places. The norms in Sobolev spaces
$H^\ell(\mathbb{R}^3)$ and $W^{\ell, r}(\mathbb{R}^3)$ are denoted
respectively by $\|\cdot\|_{H^\ell}$ and $\|\cdot\|_{W^{\ell, r}}$ for $\ell\geq0$
and $r\geq1$. Particularly, for $\ell=0$, we will simply use
$\|\cdot\|_{L^r}$. For the sake of conciseness, we
do not precise in functional space names when they are concerned
with scalar--valued or vector--valued functions. $\|(f, g)\|_X$
denotes $\|f\|_X+\|g\|_X$.   We denote
$\nabla=\partial_x=(\partial_1,\partial_2,\partial_3)$, where
$\partial_i=\partial_{x_i}$, $\nabla_i=\partial_i$ and put
$\partial_x^\ell f=\nabla^\ell f=\nabla(\nabla^{\ell-1}f)$. We denote $\|u\|_{D^{k,r}}\overset{\triangle}=\|\nabla^k u\|_{L^r}$, $D^k(\mathbb R^3)=D^{k,2}(\mathbb R^3)$, and $D^1(\mathbb R^3)=\{u\in L^6\big | \|\nabla u\|_{L^2}<\infty\}$.  Let $\Lambda^s$ be
the pseudo differential operator defined by
\begin{equation}\Lambda^sf=\mathfrak{F}^{-1}(|{\bf \xi}|^s\hat f),~\hbox{for}~s\in \mathbb{R},\nonumber\end{equation}
where both $\hat f$ and $\mathfrak{F}(f)$ denote the Fourier transform of $f$. The homogenous
Sobolev space $\dot{H}^s(\mathbb{R}^3)$ is endowed with the norm
$\|f\|_{\dot{H}^s}:=\|\Lambda^s f\|_{L^2}$. We also drop the domain $\mathbb R^3$ in integrands over $\mathbb R^3$, that is,  $\int\cdot \mathrm{d}{\bf x}=\int_{\mathbb{R}^3}\cdot \mathrm{d}{\bf x}$. Introducing the Leray projector $\mathcal P\triangleq \text{Id}+\nabla(-\Delta)^{-1}\divv$ onto divergence-free vector fields, and $\mathcal Q\triangleq \text{Id}-\mathcal P$. In particular, because
$\mathcal P$ and $\mathcal Q$ are smooth homogeneous of degree $0$ Fourier multipliers, they map $L^p$ into itself for any $1<p<\infty$.

 We also recall the Littlewood--Paley decomposition. Choose a radial function $\varphi\in S(\mathbb{R}^{3})$ supported in $\mathcal {C}=\{\xi\in
\mathbb{R}^{N},\frac{3}{4}\leq|\xi|\leq\frac{8}{3}\}$ such that
$$\sum\limits_{q\in \mathbb{Z}}\varphi(2^{-q}\xi)=1,\hbox{\ \ for all\ \ } \xi\neq 0.$$
For $q\in \mathbb{Z}$, we define the following dyadic blocks:
$$\Delta_q f=\mathfrak{F}^{-1}(\varphi(2^{-q}\xi)\mathfrak{F}f).$$
We denote the space $\mathcal {D}^{'}(\mathbb{R}^{N})$ by the dual
space of $\mathcal {D}(\mathbb{R}^{N})=\{f\in
S(\mathbb{R}^{3});D^{\alpha}\hat{f}(0)=0;\forall \alpha\in
\mathbb{N}^{3} \hbox{multi-index}\}$, which also can be identified by
the quotient space of $S^{'}(\mathbb{R}^{3})/\mathcal {P}$ with
polynomials space $\mathcal {P}$. The formal equality
$$f=\sum\limits_{q\in \mathbb{Z}}\Delta_q f$$
holds true for $f\in \mathcal {D}^{'}(\mathbb{R}^{3})$ and is called
the homogeneous Littlewood--Paley decomposition.

\begin{Definition}Let $s\in \mathbb{R},1\leq p,r\leq +\infty$. The
homogeneous Besov space $\dot{B}_{p,r}^{s}$ is defined by
$$\dot{B}_{p,r}^{s}(\mathbb{R}^{3})=\{f\in D^{'}(\mathbb{R}^{3}):\|f\|_{\dot{B}_{p,r}^{s}}<+\infty\},$$
where
$$\|f\|_{\dot{B}_{p,r}^{s}}\triangleq\|2^{qs}\|\Delta_q
f\|_{L^{p}}\|_{l^{r}}.$$
\end{Definition}

As in \cite{Hoff1}, the \textit{effective viscous flux} $F$
and vorticity ${\bf w}$ are defined by
\begin{align}\label{1.4}
F\overset{\text{def}}= (2\mu+\lambda)\divv{\bf u}-(P(\rho)-P(\tilde{\rho}))\ \ \text{and} \ \ {\bf w}\overset{\text{def}}=\nabla\times {\bf u}.
\end{align}
By \eqref{1.1}$_2$, one has that
\begin{align}\label{1.5}
\Delta F=\divv(\rho\dot{{\bf u}})\ \ \text{and} \ \ \mu\Delta  {\bf w}
=\nabla\times(\rho\dot {\bf u}),
\end{align}
where $``\ \dot\ \ "$ denotes the material derivative which is defined by
$$\dot f\overset{\text{def}}=f_t+{\bf u}\cdot\nabla f.$$
The potential energy $G(\rho)$ is defined by
\begin{align}\label{1.6}
G(\rho)=\rho\int^\rho_{{\tilde\rho}}\frac{P(s)-P(\tilde\rho)}{s^2}\mathrm{d}s,
\end{align}
 it is not hard to deduce that there exists a positive constant $C$
depending only on $\tilde\rho$ and $\bar\rho$ such that
\begin{align}\label{1.7}
\begin{cases}
G(\rho)=\frac{1}{\gamma-1}P,\ & \text{if} \ \tilde\rho=0,\\
\frac{1}{C(\tilde\rho,\bar\rho)}(\rho-\tilde\rho)^2\leq G(\rho)\leq C(\tilde\rho,\bar\rho)(\rho-\tilde\rho)^2,\ &\text{if}\ \tilde\rho>0\ \text{and}\ \rho\le \bar\rho.
\end{cases}
\end{align}

Our first main novelty of this paper is to establish the optimal convergence rates of the solutions to the Cauchy problem \eqref{1.1}--\eqref{1.3}.

\begin{Theorem}\label{thm1.1}
Let $\tilde\rho=1$. Assume that the initial data $(\rho_0,{\bf u}_0)$ satisfies
\begin{align*}
\inf\rho_0\ge c_0>0,\ \ K\triangleq\|(\rho_0-1,{\bf u}_0)\|_{H^3}<\infty.
\end{align*}
If $2\mu>\lambda$ and $(\rho,{\bf u})$ is a global classical solution to the Cauchy problem \eqref{1.1}--\eqref{1.3} verifying that
\begin{align}\label{1.8}
\sup\limits_{t\ge 0}\|\rho(\cdot,t)\|_{L^\infty}\leq M
\end{align}
for some positive constant $M$, then the following properties hold.

(1) {\bf (Lower bound for the density)} There exists a positive constant $\underline{\rho}=\underline{\rho}(c_0,M,K)$ such that
\begin{align}\label{z1.8}
\rho(t)\ge \underline{\rho},\ \ \text{for all}\ \ t\ge 0.
\end{align}

(2) {\bf (Uniform-in-time bound for the solution)} For all $t\ge 0$, it holds that
\begin{align}\label{1.9}
\|(\rho-1,{\bf u})(t)\|_{H^3}^2
+\int_0^t\big(\|\nabla \rho(\tau)\|_{H^2}^2
+\|\nabla{\bf u}(\tau)\|_{H^3}^2\big)\mathrm{d}\tau
\le C(c_0,M,K).
\end{align}

(3) {\bf (Decay estimate for the solution)}  If additionally
$N_0\triangleq\|(\rho_0-1,{\bf u}_0)\|_{\dot B^{-s}_{2,\infty}}<\infty$ with $0<s\le \frac{3}{2}$, we have
\begin{align}\label{1.10}
\|\nabla^k (\rho-1,{\bf u})(t)\|_{H^{3-k}}\le C(c_0,M,K,N_0)(1+t)^{-\frac{k+s}{2}t},\ \ k\in\{0,1,2\}.
\end{align}
\end{Theorem}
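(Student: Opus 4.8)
The plan is to split the proof of Theorem \ref{thm1.1} into three coupled stages, since the three conclusions feed into one another: the time-independent lower bound on the density is what unlocks the uniform Sobolev bounds, and those in turn are the framework inside which the decay rates are propagated by a bootstrap. The key physical quantity throughout is the \textit{effective viscous flux} $F$ defined in \eqref{1.4}, together with the material-derivative identities \eqref{1.5}, which let one transfer elliptic regularity from $F$ and ${\bf w}$ to $\nabla{\bf u}$ while avoiding a direct loss of derivatives on $\rho$.

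First I would establish the lower bound \eqref{z1.8}. Writing the continuity equation along particle trajectories, $\frac{d}{dt}\log\rho = -\divv{\bf u} = -\frac{1}{2\mu+\lambda}\big(F + P(\rho)-P(1)\big)$, one integrates in time along the flow. The term $P(\rho)-P(1)$ is harmless because \eqref{1.8} already bounds it; the real issue is controlling $\int_0^t |F|\,d\tau$ along trajectories by something depending only on $c_0,M,K$. Here I would use the standard Hoff-type energy estimates: the basic energy inequality (controlled by $G(\rho)$ via \eqref{1.7} and $K$), the first-order estimate for $\sqrt{\rho}\dot{\bf u}$ and $\nabla{\bf u}$ obtained by multiplying \eqref{1.1}$_2$ by ${\bf u}_t$, and the Beale--Kato--Majda-type bound on $\|\nabla{\bf u}\|_{L^\infty}$ in terms of $\|F\|_{L^\infty}+\|{\bf w}\|_{L^\infty}$ plus a logarithmic factor. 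The condition $2\mu>\lambda$ enters precisely in the Zlotnik-lemma argument that closes the density lower bound (and keeps the upper bound consistent with $M$ without circularity). The main obstacle is arranging all of these so that the constants depend \emph{only} on $c_0,M,K$ and not on $T$ or on any higher norm of $\rho$ — this is exactly where \cite{He} had to pay the extra $C^\alpha$ price, and the point of our argument is that the a priori bound \eqref{1.8} plus the blow-up criterion (that singularities are driven only by concentration of $\rho$ in $L^\infty$) suffices.

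Next, with $0<\underline\rho\le\rho\le M$ in hand uniformly in time, I would propagate the $H^3$ regularity to get \eqref{1.9}. This is by now a routine high-order energy argument: apply $\nabla^k$ ($k=1,2,3$) to \eqref{1.1}, pair with $\nabla^k\rho$ and $\nabla^k{\bf u}$ respectively, use the elliptic estimates $\|\nabla^2{\bf u}\|_{H^{k}}\lesssim \|\rho\dot{\bf u}\|_{H^{k}}+\|\nabla P\|_{H^{k}}$, commutator estimates for $\divv(\rho{\bf u}\otimes{\bf u})$ and $\divv(\rho{\bf u})$, and absorb top-order terms using the parabolic dissipation $\mu\|\nabla^{k+1}{\bf u}\|_{L^2}^2$. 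The density estimates have no dissipation, so one must be careful to let $\|\nabla^k\rho\|_{L^2}$ grow at worst in a Grönwall-controlled way; the uniform bounds $\underline\rho\le\rho\le M$ together with the time-integrability of $\|\nabla{\bf u}\|_{H^3}$ (which itself is part of what one is proving, so this is a continuity/bootstrap argument) give \eqref{1.9}. I expect this stage to be the least delicate.

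Finally, for the decay estimate \eqref{1.10}, I would work with the perturbation $(\sigma,{\bf u})=(\rho-1,{\bf u})$, which solves a system whose linearization is the compressible Navier--Stokes semigroup with the standard $L^1$--$L^2$ / Besov decay $\|e^{tL}f\|_{\dot H^k}\lesssim (1+t)^{-(k+s)/2}\|f\|_{\dot B^{-s}_{2,\infty}}$. Using Duhamel's formula together with the uniform-in-time bound \eqref{1.9} to estimate the nonlinear terms $\divv(\sigma{\bf u})$, $\divv(\rho{\bf u}\otimes{\bf u})$, and the pressure remainder $P(\rho)-P(1)-\gamma\sigma$, one sets up a bootstrap: assume $\|\nabla^k(\sigma,{\bf u})\|_{H^{3-k}}\le C(1+t)^{-(k+s)/2}$ as an ansatz, plug into the Duhamel integral, and verify it is self-improving. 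A convenient technical device, already flagged in the abstract as a ``regularity interpolation trick,'' is to interpolate between the no-decay $H^3$ bound and a faster-decaying low-order norm to close the high-order rates without needing decay of the very top derivatives directly; the preservation of the $\dot B^{-s}_{2,\infty}$ norm under the flow (bounded by $N_0$ uniformly in time) is what feeds the bootstrap at order $k=0$. The main obstacle here is handling the quadratic nonlinearities at the borderline $s=3/2$, where the time integral $\int_0^t (1+t-\tau)^{-(k+s)/2}(1+\tau)^{-\beta}\,d\tau$ is only marginally convergent; one resolves this by splitting $[0,t]$ into $[0,t/2]\cup[t/2,t]$ and exploiting the extra spatial integrability ($L^{6/5}$ in place of $L^1$) that the Sobolev embedding of \eqref{1.9} provides, so that no logarithmic loss appears. (I note the exponent in \eqref{1.10} is presumably the typo $(1+t)^{-\frac{k+s}{2}}$, not $(1+t)^{-\frac{k+s}{2}t}$.)
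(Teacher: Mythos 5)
Your overall architecture---lower bound on $\rho$ first, then uniform $H^3$ bounds, then decay---matches the paper, and the Stage 1 opening move (integrate $\frac{d}{dt}\log\rho=-\divv{\bf u}=-\frac{1}{2\mu+\lambda}(F+P-P(1))$ along trajectories) is exactly right. But there are two genuine gaps and one genuine difference of route.

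First, the mechanism you cite to close the lower bound is not quite right. You invoke ``the Zlotnik-lemma argument that closes the density lower bound,'' but the paper does not use a Zlotnik-type comparison lemma. Instead, after obtaining the crude time-dependent bound $\rho\ge c_0 e^{-C(1+t)}$ from the $\log\rho$ identity and \eqref{3.3}, it argues by contradiction: it supposes there are times $T_2<T_3$ (with $T_2$ large) over which $\rho$ along a trajectory drops from $\tilde c\le(1/2)^{1/\gamma}$ to $\tilde c/2$, integrates \eqref{3.8} over $[T_2,T_3]$, and then matches magnitudes. The reaction term $\rho(\rho^\gamma-1)$ contributes a \emph{linear-in-$(T_3-T_2)$} drift of the correct sign, while the $F$-term is bounded by $C\tilde c\,\delta^{1/2}(T_3-T_2)^{1/2}$ owing to the small tail $\int_{T_1}^\infty\|F\|_{W^{1,6}}^2\,d\tau\le\delta$ extracted from the global dissipation inequality \eqref{3.3}. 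The linear-vs-square-root competition is what produces the contradiction; this is precisely where the a priori global dissipation is used and no H\"older control of $\rho$ is needed. Your sketch would need to supply an alternative quantitative argument, and Zlotnik's lemma as usually stated (an upper bound mechanism) does not directly give it.

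Second, and more seriously, you dismiss Stage 2 as ``routine high-order energy estimates'' and even ``the least delicate'' stage. In fact it contains the heart of the improvement over \cite{He}. The obstacle is that the Gronwall factor for $\|\nabla\rho\|_{L^6}$ involves $\int_0^T\|\nabla{\bf u}\|_{L^\infty}\,d\tau$, and the BKM inequality \eqref{2.9} only converts this to a logarithmic Gronwall that produces $T$-dependent bounds; nothing in \eqref{3.3} directly controls $\int_0^\infty\|\nabla{\bf u}\|_{L^\infty}^2\,d\tau$ because $\|\nabla{\bf u}\|_{L^\infty}$ involves the nonlocal piece $\nabla^2(-\Delta)^{-1}\sigma$. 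The paper resolves this in Lemma \ref{lemma3.2}: after a second contradiction argument that assumes $\|\nabla\varrho\|_{L^6}$ lies in a band $[\delta^{-\kappa},2\delta^{-\kappa}]$ on some $[T_6,T_7]$, it performs a Littlewood--Paley/Bernstein decomposition of $\nabla^2(-\Delta)^{-1}\sigma$ (low modes estimated in $L^p$ with $p=3N$ via the $L^\infty$-smallness \eqref{3.20}, high modes paid for by $\|\nabla\sigma\|_{L^6}$) to show $\|\nabla^2(-\Delta)^{-1}\sigma\|_{L^\infty}\le C\delta$, hence $\|\nabla{\bf u}\|_{L^\infty}\le C(\|{\bf w}\|_{W^{1,6}}+\|F\|_{W^{1,6}}+\delta)$, and then contradicts \eqref{3.7}. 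This is the ``regularity interpolation trick'' the abstract refers to, and it is what removes the $C^\alpha$ hypothesis of \cite{He}. Your proposal does not contain this step, and without it the $T$-independent $\|\nabla\rho\|_{L^6}$ bound that underpins \eqref{1.9} cannot be closed: the ``time-integrability of $\|\nabla{\bf u}\|_{H^3}$'' you invoke is itself part of \eqref{1.9}, so used directly it is circular, whereas the paper breaks the circle by first securing $\|\nabla{\bf u}\|_{L^2_t L^\infty_x}$ from \eqref{3.3} via the Besov split.

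Third, for Stage 3 you propose a Duhamel/semigroup bootstrap, whereas the paper establishes uniform-in-time control of $\|(\varrho,{\bf u})\|_{\dot B^{-s}_{2,\infty}}$ in Lemma \ref{lemma3.4} via a frequency-localized energy estimate and then invokes the Guo--Wang scaled-energy method \cite{GuoY} (a pure weighted energy argument in negative homogeneous Sobolev spaces, matching via \eqref{2.8}). These two roads can both reach the same destination, but they are genuinely different: the Duhamel route has to confront the fact that the $\varrho$-equation is transport, not parabolic, so the linear semigroup only decays along the low-frequency wave/heat part, and one must be careful that the quadratic terms $-\frac{\mu\varrho}{\rho}\Delta{\bf u}$ etc.\ do not lose derivatives on $\rho$ in the bootstrap; the Guo--Wang energy method bypasses this entirely by never inverting the linear operator. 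Your interpolation heuristic at the end is the right spirit, but as written it is an outline rather than a closed argument, and it is not the route the paper takes.

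(Your observation that the exponent in \eqref{1.10} is a typo for $(1+t)^{-\frac{k+s}{2}}$ is correct.)
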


\begin{Remark}
Theorem \ref{thm1.1} gives the global dynamics of the system \eqref{1.1}
when the initial data is far away from the equilibrium in the whole space.
This is in sharp contrast to the results of \cite{Dan4,KK02,KK05,Matsumura3,Matsumura2} where the small initial perturbation near the non-vacuum equilibrium state is used to get the long-time behavior of the solution.
\end{Remark}

\begin{Remark}
It should be noticed that in Theorem \ref{thm1.1}, the density is only assumed to be bounded from above, while the global dynamics of large solutions in \cite[Theorem 1.1]{He} requires an extra hypothesis $\sup_{t\ge0}\|\rho(t)\|_{C^\alpha}\le M$ with arbitrarily small $\alpha$, which plays an essential role in establishing the uniform positive lower bound of the density (see the proof of Proposition 2.3 in \cite{He} for details). Thus, we greatly improve the main result of \cite[Theorem 1.1]{He}.
\end{Remark}

Next, we shall prove that the vacuum states will not vanish for any time provided that the vacuum states are present initially and the far-field density is away from a vacuum.

\begin{Theorem}\label{thm1.2}
Let $\tilde\rho=1$. Suppose that the initial data $(\rho_0,{\bf u}_0)$ satisfies
\begin{align}
(\rho_0|{\bf u}_0|^2+G(\rho_0))\in L^1(\mathbb R^3),\ \ {\bf u}_0\in D^1(\mathbb R^3)\cap D^3(\mathbb R^3),\label{1.11}\\
(\rho_0-1,P(\rho_0)-P(1))\in H^3, \ \ 0=\inf\rho_0\le \sup\rho_0\le \bar\rho,\label{1.12}
\end{align}
and the compatibility condition
\begin{align}\label{1.13}
-\mu\Delta{\bf u}_0-(\mu+\lambda)\nabla\divv {\bf u}_0+\nabla P(\rho_0)=\rho_0 g
\end{align}
for some $g\in D^1(\mathbb R^3)$ with $\sqrt{\rho_0}g\in L^2(\mathbb R^3)$.
Let $2\mu>\lambda$ and $(\rho,{\bf u})$ be a global classical solution to the Cauchy problem \eqref{1.1}--\eqref{1.3} verifying \eqref{1.8}, then it holds that
\begin{align}\label{1.14}
\lim\limits_{t\rightarrow\infty}\|\rho-1\|_{L^q}=0,\ \ \text{for any}\ \ q>2,
\end{align}
and
\begin{align}\label{1.15}
\inf\limits_{{\bf x}\in\mathbb R^3}\rho({\bf
x},t)=0,\ \ \text{for any}\ \ t\ge 0.
\end{align}
\end{Theorem}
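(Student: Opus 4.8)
The plan is to split the statement into the two assertions \eqref{1.14} and \eqref{1.15}, and to handle the second one — persistence of vacuum — by a contradiction/propagation argument built on the continuity equation, while the first one follows from an energy-dissipation estimate combined with the uniform bounds. I would begin with the vacuum persistence \eqref{1.15}, since it is the structural core. Fix $t_0>0$ and suppose, for contradiction, that $\inf_{\bf x}\rho({\bf x},t_0)=\delta>0$. Because $(\rho,{\bf u})$ is a global classical solution satisfying \eqref{1.8}, the blow-up criterion and the interpolation machinery quoted in the introduction give that on $[0,t_0]$ the velocity field ${\bf u}$ lies in $L^1(0,t_0;W^{1,\infty})$ (or at least $L^1(0,t_0;\Lip)$), so that the flow map ${\bf X}(t;{\bf y})$ defined by $\partial_t{\bf X}=\mathbf{u}({\bf X},t)$, ${\bf X}(0;{\bf y})={\bf y}$ is a well-defined bi-Lipschitz homeomorphism of $\mathbb R^3$ for each $t\le t_0$. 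Along characteristics the continuity equation integrates to $\rho({\bf X}(t;{\bf y}),t)=\rho_0({\bf y})\exp\!\big(-\int_0^t\divv{\bf u}({\bf X}(\tau;{\bf y}),\tau)\,\mathrm{d}\tau\big)$, and since $\divv{\bf u}\in L^1(0,t_0;L^\infty)$ the exponential factor is bounded above and below by positive constants depending only on $t_0$ and the uniform bounds. Hence $\rho_0({\bf y})=0$ forces $\rho({\bf X}(t_0;{\bf y}),t_0)=0$; as $\inf\rho_0=0$ there is a sequence ${\bf y}_n$ with $\rho_0({\bf y}_n)\to 0$, and evaluating the (continuous) density $\rho(\cdot,t_0)$ at ${\bf X}(t_0;{\bf y}_n)$ gives $\inf_{\bf x}\rho({\bf x},t_0)=0$, contradicting $\delta>0$. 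This yields \eqref{1.15} for all $t\ge 0$ (the case $t=0$ being the hypothesis).

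For \eqref{1.14}, I would argue by the standard energy method adapted to the bounded-density setting. The basic energy identity for \eqref{1.1} gives $\frac{\mathrm d}{\mathrm dt}\int\big(\tfrac12\rho|{\bf u}|^2+G(\rho)\big)\,\mathrm{d}{\bf x}+\int\big(\mu|\nabla{\bf u}|^2+(\mu+\lambda)|\divv{\bf u}|^2\big)\,\mathrm{d}{\bf x}=0$, so $\int_0^\infty\|\nabla{\bf u}(\tau)\|_{L^2}^2\,\mathrm{d}\tau<\infty$ and $\int(\tfrac12\rho|{\bf u}|^2+G(\rho))\,\mathrm{d}{\bf x}$ is nonincreasing, hence convergent; in particular $\int G(\rho(t))\,\mathrm{d}{\bf x}$ has a limit as $t\to\infty$. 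The point is to show that limit is zero. Using the momentum equation tested against a suitable corrector — the Bogovskii-type field ${\bf v}$ with $\divv{\bf v}=P(\rho)-\overline{P(\rho)}$ (or directly against ${\bf u}$ plus the time derivative control) — one obtains a differential inequality of the form $\frac{\mathrm d}{\mathrm dt}\Phi(t)+c\|P(\rho)-P(1)\|_{L^2}^2\le C\|\nabla{\bf u}\|_{L^2}^2$ for a functional $\Phi$ equivalent (up to the already-controlled kinetic term) to a nonnegative quantity; integrating in time and using $\int_0^\infty\|\nabla{\bf u}\|_{L^2}^2<\infty$ forces $\int_0^\infty\|P(\rho)-P(1)\|_{L^2}^2\,\mathrm{d}\tau<\infty$, which together with the uniform $H^3$-type bounds on $P(\rho)-P(1)$ and uniform continuity in time of $\|P(\rho)-P(1)\|_{L^2}$ gives $\|P(\rho)-P(1)\|_{L^2}\to 0$. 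Since $0\le\rho\le\bar\rho$, the function $\rho\mapsto P(\rho)-P(1)=\rho^\gamma-1$ is comparable to $\rho-1$ on $[0,\bar\rho]$, so $\|\rho-1\|_{L^2}\to 0$; interpolating $\|\rho-1\|_{L^q}\le\|\rho-1\|_{L^2}^{2/q}\|\rho-1\|_{L^\infty}^{1-2/q}$ with the uniform $L^\infty$ bound $\|\rho-1\|_{L^\infty}\le\bar\rho+1$ yields \eqref{1.14} for every $q>2$.

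The main obstacle I anticipate is twofold. First, in the vacuum-persistence part one must be sure the flow map is genuinely bi-Lipschitz on all of $\mathbb R^3$ up to time $t_0$ despite the far-field value $\tilde\rho=1$ and the possibly non-integrable behavior of ${\bf u}$ at infinity; this is where one invokes the $D^1\cap D^3$ regularity of ${\bf u}_0$ together with the blow-up criterion and the interpolation trick advertised in the abstract to secure ${\bf u}\in L^1(0,t_0;W^{1,\infty})$, and one should be careful that the compatibility condition \eqref{1.13} is what makes the classical solution well-defined near $t=0$ in the presence of vacuum. Second, in the decay part the delicate step is producing the correct corrector/Lyapunov functional $\Phi$ in a setting where the density touches zero: the Bogovskii operator must be applied to a mean-zero function on the whole space, so one works with $P(\rho)-P(1)\in L^2\cap L^6$ and uses its decay at infinity rather than a torus-type average, and one must absorb the terms $\int\rho{\bf u}\cdot\partial_t{\bf v}$ and $\int\rho({\bf u}\otimes{\bf u}):\nabla{\bf v}$ using the uniform bounds — this is the technically heaviest computation but is by now standard given the a priori estimates. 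Everything downstream (uniform continuity in time of the relevant norms, hence upgrading $L^2$-in-time integrability to pointwise decay) is routine interpolation using the uniform-in-time regularity that the hypotheses and the blow-up criterion provide.
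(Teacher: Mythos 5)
Your treatment of \eqref{1.15} is essentially the paper's: both use the Lagrangian formula $\rho({\bf X}(t;{\bf y}),t)=\rho_0({\bf y})\exp\big(-\int_0^t\divv{\bf u}\,\mathrm{d}\tau\big)$ and the fact that $\int_0^t\|\divv{\bf u}\|_{L^\infty}\mathrm{d}\tau$ is finite (the paper bounds it by $C(1+t)$ via $F,\sigma\in L^2_t W^{1,6}$ from \eqref{3.3}), so that $\inf_{\bf y}\rho_0({\bf y})=0$ is transported to $\inf_{\bf x}\rho({\bf x},t)=0$. That part is fine.

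For \eqref{1.14}, however, your proposed route has a genuine gap, and it is precisely the one the authors warn about in their strategy discussion: the Bogovskii operator is not available on $\mathbb R^3$. On a bounded domain one has $\|{\bf v}\|_{W^{1,p}}\le C\|f\|_{L^p}$ for $\divv{\bf v}=f$ with mean zero, but in the whole space the natural candidate ${\bf v}=-\nabla(-\Delta)^{-1}(P(\rho)-P(1))$ controls only $\|\nabla{\bf v}\|_{L^2}$ by $\|P-P(1)\|_{L^2}$; the corrector itself need not lie in $L^2$ unless $P-P(1)\in\dot H^{-1}$, which is not part of the data, and there is no global mean-zero normalization to appeal to. The terms you name (``$\int\rho{\bf u}\cdot\partial_t{\bf v}$'' and the time-continuity step needed to upgrade $\int_0^\infty\|P-P(1)\|_{L^2}^2\,\mathrm{d}t<\infty$ to $\|P-P(1)\|_{L^2}\to 0$) are exactly where this bites and where ``by now standard'' does not suffice, because the standard version lives on a bounded domain. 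The paper avoids all of this: the dissipation integral is already in hand from Proposition~\ref{pro3.1} ($\int_0^\infty\|\sigma\|_{L^6}^2\,\mathrm{d}t\le C$, obtained via effective viscous flux, not Bogovskii), and the only new ingredient for \eqref{1.14} is the scalar transport equation $\sigma_t+{\bf u}\cdot\nabla\sigma+\gamma\sigma\divv{\bf u}=0$ for $\sigma=\rho^\gamma-1$. Testing with $6\sigma^5$ gives $\int_0^\infty\big|\frac{\mathrm{d}}{\mathrm{d}t}\|\sigma\|_{L^6}^6\big|\,\mathrm{d}t\le C\int_0^\infty(\|\sigma\|_{L^6}^6+\|\divv{\bf u}\|_{L^2}^2)\,\mathrm{d}t<\infty$, and together with $\|\sigma\|_{L^6}^6\in L^1(0,\infty)$ this forces $\|\sigma(t)\|_{L^6}\to 0$; then \eqref{3.1} gives $\|\rho-1\|_{L^6}\to 0$, and interpolation against the uniform $L^2$ and $L^\infty$ bounds yields $L^q$ decay for every $q>2$. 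I would recommend replacing the Bogovskii step with this $\sigma$-ODE argument; it is shorter, works in the whole space, and does not require the uniform-in-time continuity of $\|P-P(1)\|_{L^2}$ that your writeup takes for granted.
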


\begin{Remark}
\eqref{1.15} shows that the vacuum state will not vanish for any time, which together with the large-time behavior \eqref{1.14} leads to the gradient of the density has to blow up as time goes to infinity, in the sense that for any $r>3$,
\begin{align*}
\lim\limits_{t\rightarrow\infty}\|\nabla\rho(\cdot,t)\|_{L^r}=\infty.
\end{align*}
This is in sharp contrast to that in \cite{Dan1,Matsumura3} where the gradient of the density is suitably small uniformly for all time.
\end{Remark}

\begin{Remark}
Theorem \ref{thm1.2} extends the torus case \cite{WLZ} to the whole space. However, this is a non-trivial exercise since the Poincar\'e inequality is crucial in the arguments in \cite{WLZ} which does not hold in the whole space.
\end{Remark}


Finally, we have the following results concerning the decay properties of the global classical solutions when the far-field density is vacuum (i.e., $\tilde\rho=0$).

\begin{Theorem}\label{thm1.3}
Assume that $\tilde\rho=0$, $\rho_0\in L^1(\mathbb R^3)$, and the initial data $(\rho_0,{\bf u}_0)$ satisfies \eqref{1.11}--\eqref{1.13}. Let $7\mu>\lambda$ and $(\rho,{\bf u})$ be a global classical solution to the Cauchy problem \eqref{1.1}--\eqref{1.3} verifying \eqref{1.8}, then it holds that, for any $t\geq0$,
\begin{align}\label{1.16}
\norm{P(\cdot,t)}_{L^p}\le C(1+t)^{-1+\frac{1}{p\gamma}},\ \ \text{for any}\ 1\leq p\leq\infty,
\end{align}
and
\begin{align}\label{1.17}
\int\rho|{\bf u}|^2 \mathrm{d}{\bf x}
\le
C(1+t)^{-1-\frac{1}{3\gamma}}.
\end{align}
\end{Theorem}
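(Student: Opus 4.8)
The plan is to exploit the basic energy balance together with the structure of the pressure equation when $\tilde\rho=0$. Since $\tilde\rho=0$, the relation \eqref{1.7} gives $G(\rho)=\frac{1}{\gamma-1}P$, so $\int\rho|{\bf u}|^2\,\mathrm{d}{\bf x}+\frac{1}{\gamma-1}\int P\,\mathrm{d}{\bf x}$ is essentially the physical energy; the usual multiplier argument (testing \eqref{1.1}$_2$ by ${\bf u}$ and using \eqref{1.1}$_1$) yields the energy identity $\frac{\mathrm{d}}{\mathrm{d}t}\big(\frac12\int\rho|{\bf u}|^2+\frac{1}{\gamma-1}\int P\big)+\mu\int|\nabla{\bf u}|^2+(\mu+\lambda)\int(\divv{\bf u})^2=0$. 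First I would establish, under the blow-up criterion and the uniform bound \eqref{1.8}, the uniform-in-time a priori bounds on the classical solution analogous to \eqref{1.9} (but with the vacuum far field), so that all the manipulations below are justified and all integrations by parts are legitimate for a genuine classical solution decaying at infinity.

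Next, the key is to derive a closed differential inequality for $\int P\,\mathrm{d}{\bf x}$ (equivalently $\|P\|_{L^1}$). Integrating the renormalized continuity equation $P_t+\divv(P{\bf u})+(\gamma-1)P\divv{\bf u}=0$ over $\mathbb R^3$ gives $\frac{\mathrm{d}}{\mathrm{d}t}\int P\,\mathrm{d}{\bf x}=-(\gamma-1)\int P\divv{\bf u}\,\mathrm{d}{\bf x}$. The term on the right is controlled by $C\|P\|_{L^2}\|\divv{\bf u}\|_{L^2}$, and $\|P\|_{L^2}^2\le\|P\|_{L^\infty}^{2-\theta}\|P\|_{L^1}^{\theta}\le C\|P\|_{L^1}^{\theta}$ by \eqref{1.8}. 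Combined with the time-integrability of $\|\nabla{\bf u}\|_{L^2}^2$ coming from the energy identity, this should produce (after a Grönwall/ODE comparison argument, possibly first showing $\int P\to 0$ and then quantifying the rate) the algebraic decay $\|P\|_{L^1}\le C(1+t)^{-1/\gamma}$; the exponent is calibrated so that the self-consistent decay of the energy functional closes. Then \eqref{1.16} for general $1\le p\le\infty$ follows by interpolation $\|P\|_{L^p}\le\|P\|_{L^\infty}^{1-1/p}\|P\|_{L^1}^{1/p}\le C(1+t)^{-1/(p\gamma)}$, combined with the $L^\infty$ bound $\|P\|_{L^\infty}\le C(1+t)^{-1}$ which must itself be bootstrapped: this $L^\infty$ rate is the genuinely new assertion and is obtained by a De Giorgi / Moser-type iteration on the pressure equation $P_t+{\bf u}\cdot\nabla P+(\gamma-1)P\,\divv{\bf u}=0$ along particle trajectories, using that $\divv{\bf u}=\frac{1}{2\mu+\lambda}(F+P)$ and that $F$ decays because $\Delta F=\divv(\rho\dot{\bf u})$ — here the hypothesis $7\mu>\lambda$ enters to make the coefficient in the Zlotnik-type ODE argument for the upper bound of $P$ along trajectories have the right sign, forcing $P$ to decay like $(1+t)^{-1}$ pointwise.

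Finally, for \eqref{1.17} I would return to the energy identity: with $\|P\|_{L^1}\le C(1+t)^{-1/\gamma}$ now known, the energy functional $E(t)=\frac12\int\rho|{\bf u}|^2+\frac{1}{\gamma-1}\int P$ satisfies $E'(t)+\mu\int|\nabla{\bf u}|^2\le 0$, and one couples this with a Gagliardo--Nirenberg/Sobolev control $\int\rho|{\bf u}|^2\le\|\rho\|_{L^{3/2}}\|{\bf u}\|_{L^6}^2\le C\|\rho\|_{L^{3/2}}\|\nabla{\bf u}\|_{L^2}^2$, where $\|\rho\|_{L^{3/2}}^{3/2}\le\|\rho\|_{L^\infty}^{1/2}\|\rho\|_{L^1}=\|\rho\|_{L^\infty}^{1/2}\|\rho_0\|_{L^1}$ is uniformly bounded (mass conservation). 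This gives the differential inequality $E'(t)\le -\frac{c\,\mu}{\int\rho|{\bf u}|^2}\big(\int\rho|{\bf u}|^2\big)^2$... more precisely a relation of the form $E'(t)\le -c\big(\int\rho|{\bf u}|^2\big)$, which together with $\int P\,\mathrm{d}{\bf x}\le C(1+t)^{-1/\gamma}$ (so that the $\int\rho|{\bf u}|^2$ part of $E$ is the dominant one up to the explicit $(1+t)^{-1/\gamma}$ correction) yields the stated rate $(1+t)^{-1-1/(3\gamma)}$ for the kinetic energy after an ODE comparison. The main obstacle I anticipate is the pointwise $L^\infty$-decay of $P$ with the sharp rate $(1+t)^{-1}$: unlike the non-vacuum case, one cannot use the lower bound of the density, and the argument must track $P$ along characteristics with careful estimates on $\int_0^t\|\divv{\bf u}\|_{L^\infty}\,\mathrm{d}\tau$, which is where the restrictive condition $7\mu>\lambda$ is consumed; getting the constant in the trajectory ODE to cooperate (so that the negative feedback from $-(\gamma-1)P\divv{\bf u}$ dominates) is the delicate point.
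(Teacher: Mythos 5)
There are two genuine gaps in your sketch, and they are structural rather than cosmetic. First, the closed differential inequality you propose for \(\int P\,\mathrm{d}{\bf x}\) does not close: from \(\frac{\mathrm{d}}{\mathrm{d}t}\int P\,\mathrm{d}{\bf x}=-(\gamma-1)\int P\divv{\bf u}\,\mathrm{d}{\bf x}\) one only gets \(\big|\frac{\mathrm{d}}{\mathrm{d}t}\|P\|_{L^1}\big|\le C\|P\|_{L^2}\|\nabla{\bf u}\|_{L^2}\), an estimate with the wrong sign structure; using only \(\|P\|_{L^2}^2\le C\|P\|_{L^1}\) and \(\int_0^\infty\|\nabla{\bf u}\|_{L^2}^2\,\mathrm{d}t<\infty\) it yields at best a growth bound, not decay, because there is no coercive term damping \(\int P\). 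The paper closes the loop by applying \(\Delta^{-1}\divv\) to the momentum equation (so that \(P=\partial_t(-\Delta)^{-1}\divv(\rho{\bf u})+(2\mu+\lambda)\divv{\bf u}-\mathcal R_i\mathcal R_j(\rho u^iu^j)\)) and pairing with \(P\), which produces a genuine \(\|P\|_{L^2}^2\)-dissipation; combined with \eqref{5.11} this gives the Lyapunov inequality \(\frac{\mathrm{d}}{\mathrm{d}t}\mathcal M_1+C\mathcal M_1^{\frac{2\gamma-1}{\gamma-1}}\le 0\), and hence \(\int(\frac12\rho|{\bf u}|^2+G(\rho))\,\mathrm{d}{\bf x}\le C(1+t)^{-\frac{\gamma-1}{\gamma}}\). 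Relatedly, your asserted rate \(\|P\|_{L^1}\le C(1+t)^{-1/\gamma}\) is inconsistent with \eqref{1.16} at \(p=1\), which demands \(\|P\|_{L^1}\le C(1+t)^{-\frac{\gamma-1}{\gamma}}\); these coincide only for \(\gamma=2\).

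Second, the \(L^\infty\)-decay of \(P\) and the kinetic-energy rate \eqref{1.17} are not obtained by a De Giorgi/Moser iteration or by a Zlotnik-type sign argument, and cannot be reached from the plain energy identity alone: the crucial input is the set of time-weighted a priori estimates of Proposition~\ref{proposition5.2} (taken from Li--Xin \cite{LiXin}), namely \(t\|\nabla{\bf u}\|_{L^2}^2\le C\), \(t^2\|\sqrt\rho\dot{\bf u}\|_{L^2}^2\le C\), and \(\int t^2\|\nabla\dot{\bf u}\|_{L^2}^2\,\mathrm{d}t\le C\). These are precisely what allow one to bound \(\int_{T_{10}}^t\tau^2\|F\|_{L^\infty}^2\,\mathrm{d}\tau\le Ct\) (via \(\|F\|_{L^\infty}^2\le C\|\rho\dot{\bf u}\|_{L^2}\|\nabla\dot{\bf u}\|_{L^2}\)) in the trajectory ODE \(\frac{\mathrm{d}}{\mathrm{d}t}(t^2\rho^\gamma)+\frac{\gamma}{2\mu+\lambda}t^2\rho^{2\gamma}=2t\rho^\gamma-\frac{\gamma}{2\mu+\lambda}t^2\rho^\gamma F\), giving \(\|P\|_{L^\infty}\le C(1+t)^{-1}\); and what upgrades \(\|{\bf u}\|_{L^6}^2\le C\|\nabla{\bf u}\|_{L^2}^2\le C(1+t)^{-1}\), which, together with \(\|\rho\|_{L^{3/2}}\le\|\rho\|_{L^1}^{2/3}\|\rho\|_{L^\infty}^{1/3}\le C(1+t)^{-\frac{1}{3\gamma}}\), yields \eqref{1.17}. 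The hypothesis \(7\mu>\lambda\) is consumed in invoking \cite{LiXin}, not in any sign/feedback argument along characteristics. Also note that uniform \(H^3\) bounds analogous to \eqref{1.9} are not available (and are not claimed) in the vacuum far-field regime; your intention to ``first establish \eqref{1.9}'' is not a valid starting point here.
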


\begin{Remark}
To the best of our knowledge, the $L^\infty$-decay with a rate $(1+t)^{-1}$ for the pressure in \eqref{1.16} is completely new to the 3D compressible Navier--Stokes equations \eqref{1.1} in the presence of vacuum.
\end{Remark}

\subsection{Strategy of the proofs}

We shall now explain the major difficulties and techniques in the proof of our main results. It should be pointed out that the methods used in \cite{PSW,WLZ} cannot be adopted here to prove Theorems \ref{thm1.1} and \ref{thm1.2} since their arguments rely more or less on the fact that domain is bounded. For instance, the Poincar\'e inequality and the $L^p$ bound of Bogovskii operator are used which do not hold in the whole space. Moreover, as stated above, the analysis in \cite{He} relies heavily on the condition that the density is bounded uniformly in time in the H\"older space $C^\alpha$ with $\alpha$ arbitrarily small. Consequently, we cannot simply employ the frameworks of \cite{PSW,WLZ,He}, but need some new observations and ideas to establish the desired {\it a priori} estimates under the assumption \eqref{1.8}.

We first show the time-independent positive lower bound \eqref{z1.8} of the density. Motivated by \cite{Desjardins}, we rewrite the mass equation \eqref{1.1}$_1$ in terms of $\ln\rho$, which immediately yields the time-dependent positive lower bound of $\rho$ (see \eqref{wz5}). Then, arguing by contradiction, and making fully use of \eqref{1.1}$_1$ and Lagrangian coordinates, we derive a crucial bound on $\rho-1$ for large time (see \eqref{3.10}). This together with the dissipative estimate on the effective viscous flux (see \eqref{3.3}) leads to the time-independent positive lower bound of $\rho$ immediately. Such bound is important to show the large-time behavior of $\|\rho(t)-1\|_{L^\infty}$ (see \eqref{wz6}). Next, we attempt to prove the uniform-in-time bound \eqref{1.9} of the solution. It turns out that one of the key issues is to derive the time-independent estimate on $L^6$-norm of the gradient of $\rho$. Multiplying the equation of $\nabla\rho$ (see \eqref{3.14}) by $|\nabla\rho|^4\nabla\rho$ yields that the time-dependent estimates on both the $L^\infty_tL^6_x$-norm of the density gradient and the $L^2_tL^\infty_x$-norm of the velocity gradient can be obtained by solving a logarithm Gronwall inequality based on Lemma \ref{lemma2.5}. Then, after some careful analysis based on contradiction arguments, we succeed in obtaining that, for large time, the $L^6$-norm of the gradient of the density can be controlled with the help of an interpolation trick and Proposition \ref{pro3.1}. This combined with \eqref{wz6} leads to the large-time behavior of $\|\nabla\rho(t)\|_{L^6}$ and the time-independent estimate on the $L^2_tL^\infty_x$-norm of the velocity gradient. With this key bound for $L^\infty(0,\infty;W^{1,6})$-norm of the density at hand, we can deduce \eqref{1.9} (see the detailed proof of Lemma \ref{lemma3.3}). Then, using these {\it a priori} estimates, one derives the decay estimate \eqref{1.10} by applying the arguments as in \cite{GuoY}. Next, we show Theorem \ref{thm1.2}. To do so, the key step is to establish that $\|\rho-1\|_{L^6}$ converges to zero as time goes to infinity (see \eqref{wz8}), which combined with \eqref{3.3}, the upper bound of the density, and the interpolation inequality yields the desired results \eqref{1.14} and \eqref{1.15}.

To prove Theorem \ref{thm1.3}, the key ingredient is to establish an algebraic decay rate of the total energy (see \eqref{5.1}). By the basic energy estimate
and making fully use of the momentum equation \eqref{1.1}$_2$, one can obtain an energy-dissipation inequality of the form
\begin{align*}
\frac{\mathrm{d}}{\mathrm{d}t}\mathcal M_1(t)+C(\mathcal M_1(t))^\frac{2\gamma-1}{\gamma-1}\le 0,
\end{align*}
where $\mathcal M_1(t)$ is equivalent to the total energy. This implies the algebraic decay estimate in Lemma \ref{lemma5.1} immediately. Then, based on such estimate and the large-time asymptotic behavior of solutions in \cite{LiXin} (see \eqref{5.13} and \eqref{5.14}), we are able to derive the desired decay estimates \eqref{1.16} and \eqref{1.17}.

\subsection{Structure of the paper}

In Section \ref{sec2}, we collect some elementary facts and inequalities that will be used later. Section \ref{sec3} is devoted to the {\it a priori} estimates, which play important roles in the proof of Theorems \ref{thm1.1} and \ref{thm1.2} in the next section. Finally, we give the proof of Theorem \ref{thm1.3} in Section \ref{sec5}.

\section{Preliminaries}\label{sec2}

In this section, we collect some known facts and elementary inequalities that will be  used frequently later.

We start with the local existence and uniqueness of classical solutions when the initial density may vanish in an open set, whose proof can be performed by using standard procedures as in \cite{ChoKim}.

\begin{Lemma} For $\tilde\rho\ge 0$, assume that the initial data satisfy \eqref{1.11}--\eqref{1.13}. Then there exist a small time $T_*$ and a unique classical solution $(\rho,{\bf u})$ to the Cauchy problem \eqref{1.1}--\eqref{1.3} on $\mathbb R^3\times (0,T_*]$ such that
\begin{align}\label{2.1}
\begin{cases}
(\rho-\tilde\rho,P(\rho)-P(\tilde\rho))\in C([0,T_*];H^3(\mathbb R^3)),\\
{\bf u}\in C([0,T_*];D^1(\mathbb R^3)\cap D^3(\mathbb R^3))\cap L^2((0,T_*);D^4(\mathbb R^3)),\\
{\bf u}_t\in L^\infty((0,T_*);D^1(\mathbb R^3))\cap L^2((0,T_*);D^2(\mathbb R^3)),\\
 \sqrt{\rho}{\bf u}_{tt}\in L^2((0,T_*);L^2(\mathbb R^3)),\  \sqrt{t}{\bf u}\in L^\infty((0,T_*);D^4(\mathbb R^3)),\\
  \sqrt{t}{\bf u}_{t}\in L^\infty((0,T_*);D^2(\mathbb R^3)),\  \sqrt{t}{\bf u}_{tt}\in L^2((0,T_*);D^1(\mathbb R^3)),\\
    t^\frac{1}{2}\sqrt{\rho}{\bf u}_{tt}\in L^\infty((0,T_*);L^2(\mathbb R^3)),\  t{\bf u}_{t}\in L^\infty((0,T_*);D^3(\mathbb R^3)),\\
    t{\bf u}_{tt}\in L^\infty((0,T_*);D^1(\mathbb R^3))\cap L^2((0,T_*);D^2(\mathbb R^3)),\\
    t\sqrt{\rho}{\bf u}_{ttt}\in L^2((0,T_*);L^2(\mathbb R^3)),\ t^\frac{3}{2}{\bf u}_{tt}\in L^\infty((0,T_*);D^2(\mathbb R^3)),\\
  t^\frac{3}{2}{\bf u}_{ttt}\in L^2((0,T_*);D^1(\mathbb R^3)),\ t^\frac{3}{2}\sqrt{\rho}{\bf u}_{ttt}\in L^\infty((0,T_*);L^2(\mathbb R^3)).
\end{cases}
\end{align}
\end{Lemma}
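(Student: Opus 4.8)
The plan is to follow the now-standard iteration scheme of Cho--Kim \cite{ChoKim} for the degenerate (vacuum-allowing) system, adapted to the far-field state $\tilde\rho\ge 0$. First I would reduce to the case $\tilde\rho=0$ or $\tilde\rho>0$ by working with the perturbation $\rho-\tilde\rho$ and $P(\rho)-P(\tilde\rho)$, and regularize the initial density by replacing $\rho_0$ with $\rho_0^\delta=\rho_0+\delta$ so that the linearized momentum equation is uniformly parabolic; all estimates below must be made independent of $\delta$ so that one may send $\delta\to 0$ at the end. The iteration itself is the usual decoupling: given $({\bf u}^{k},\rho^{k})$, first solve the linear transport equation $\rho^{k+1}_t+\divv(\rho^{k+1}{\bf u}^{k})=0$ with initial data $\rho_0^\delta$ (which preserves the bounds $0<\delta\le\rho^{k+1}$ and $\rho^{k+1}\le\bar\rho+C\delta$ on a short interval by the method of characteristics and a Gr\"onwall argument), and then solve the linear Lam\'e system
\begin{align*}
\rho^{k+1}{\bf u}^{k+1}_t+\rho^{k+1}{\bf u}^{k}\cdot\nabla{\bf u}^{k+1}-\mu\Delta{\bf u}^{k+1}-(\mu+\lambda)\nabla\divv{\bf u}^{k+1}=-\nabla P(\rho^{k+1})
\end{align*}
with initial data ${\bf u}_0$. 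Linear parabolic theory and elliptic regularity for the Lam\'e operator (in the form $\|{\bf u}\|_{D^{j+2}}\le C\|{-}\mu\Delta{\bf u}-(\mu+\lambda)\nabla\divv{\bf u}\|_{D^{j}}$) give the existence of each iterate in the regularity class of \eqref{2.1}.

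The heart of the argument is the uniform (in $k$ and $\delta$) a priori bound on a time interval $[0,T_*]$ whose length depends only on the norms appearing in \eqref{1.11}--\eqref{1.13}. I would derive it in the following order: (i) the basic energy estimate, multiplying the momentum equation by ${\bf u}^{k+1}$; (ii) the first ``material-derivative'' estimate, multiplying by ${\bf u}^{k+1}_t$ and integrating by parts, which controls $\|\sqrt{\rho^{k+1}}{\bf u}^{k+1}_t\|_{L^2}$ and $\|\nabla{\bf u}^{k+1}\|_{L^2}$ — here the compatibility condition \eqref{1.13} is exactly what is needed to bound the initial value $\|\sqrt{\rho_0^\delta}{\bf u}^{k+1}_t(0)\|_{L^2}$ by $\|\sqrt{\rho_0}g\|_{L^2}+C\delta$; (iii) differentiating the momentum equation in time and testing with ${\bf u}^{k+1}_{tt}$, which yields $\|\sqrt{\rho^{k+1}}{\bf u}^{k+1}_{tt}\|_{L^2}$ and $\|\nabla{\bf u}^{k+1}_t\|_{L^2}$; (iv) feeding these back into the elliptic estimates for the Lam\'e system together with the transport estimates for $\nabla^j\rho^{k+1}$ (obtained by differentiating the mass equation and using $\|\nabla{\bf u}^{k}\|_{L^\infty}\lesssim\|{\bf u}^{k}\|_{D^2\cap D^3}$) to close a bound on $\|{\bf u}^{k+1}\|_{D^1\cap D^3}$, $\|{\bf u}^{k+1}\|_{L^2(0,T_*;D^4)}$, and $\|(\rho^{k+1}-\tilde\rho,P(\rho^{k+1})-P(\tilde\rho))\|_{H^3}$. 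Choosing $T_*$ small makes all these bounded by $2$ times a constant determined by the data, so the scheme is stable.

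Next I would show the sequence is Cauchy: writing the equations for the differences $\rho^{k+1}-\rho^{k}$ and ${\bf u}^{k+1}-{\bf u}^{k}$ and performing a lower-order energy estimate (in $L^2$ for the velocity difference and in $L^\infty_tL^2_x\cap L^\infty_tH^1_x$-type norms for the density difference, with a small loss of regularity that is harmless because of the uniform high-order bounds), one gets a contraction in a weaker norm on a possibly smaller time interval; this produces a limit $(\rho,{\bf u})$ which, by the uniform bounds and lower semicontinuity, lies in the class \eqref{2.1} and solves \eqref{1.1}--\eqref{1.3}. Uniqueness follows from the same difference estimate applied to two solutions. Finally, the time-weighted statements in \eqref{2.1} (the $\sqrt{t}$, $t$, $t^{3/2}$ bounds on higher derivatives and higher time-derivatives of ${\bf u}$) are recovered by a bootstrap: since the data only control ${\bf u}_0\in D^1\cap D^3$ and $\sqrt{\rho_0}g\in L^2$, quantities like ${\bf u}_{tt}$, $\nabla^4{\bf u}$, ${\bf u}_{ttt}$ need not be square-integrable up to $t=0$, but multiplying the corresponding differentiated equations by the appropriate power of $t$ before testing removes the singularity, and one estimates $\sqrt{t}\,{\bf u}_t$, $t\,{\bf u}_t$, $t\sqrt{\rho}{\bf u}_{tt}$, $t^{3/2}{\bf u}_{tt}$, $t^{3/2}\sqrt{\rho}{\bf u}_{ttt}$, etc., in cascade, each step using the previously established lower-order weighted bound and the elliptic estimate for the Lam\'e system.

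\textbf{Main obstacle.} The delicate point is the degeneracy of the momentum equation at the vacuum set: one cannot estimate ${\bf u}_t$ directly in $L^2$, only $\sqrt{\rho}{\bf u}_t$, so all the higher a priori estimates must be threaded through the compatibility condition \eqref{1.13} and closed consistently in the weighted spaces — and everything must remain uniform as the regularization parameter $\delta\to 0$ and (for $\tilde\rho=0$) uniform in the behavior at spatial infinity, where $\rho$ decays so that $\nabla P$-type source terms have to be handled using the integrability $\rho_0\in L^1$ together with interpolation rather than any Poincar\'e-type inequality. Keeping track of these dependencies, rather than any single estimate, is the technical core.
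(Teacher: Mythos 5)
Your proposal is correct and follows essentially the same route as the paper, which simply cites the ``standard procedures as in \cite{ChoKim}'' without further detail: regularize the initial density, decouple into a linear transport equation plus a linear Lam\'e system, close uniform-in-$(k,\delta)$ \emph{a priori} bounds through the compatibility condition, establish contraction in a weaker norm, and recover the $t^{1/2}$, $t$, $t^{3/2}$-weighted higher regularity by a time-weighted bootstrap. Since the paper provides no proof beyond this citation, your reconstruction is exactly the intended argument.
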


Next, the following estimates follow from \eqref{1.4} and the standard $L^p$-estimate of elliptic system, cf. \cite{HuangLi1}.
\begin{Lemma}\label{Lemm2.2} Let $(\rho,{\bf u})$ be as in Theorems \ref{thm1.1}--\ref{thm1.3}, there exists a generic positive constant $C$ such that, for $p\in[2,6]$,
\begin{align}\label{2.2}
\|\nabla F\|_{L^p}+\|\nabla{\bf w}\|_{L^p}&\leq C\|\rho\dot {\bf u}\|_{L^p},
\\ \label{2.3}
\|{F}\|_{L^p}+\|{\bf w}\|_{L^p}&\leq C\|\rho\dot {\bf u}\|_{L^2}^\frac{3p-6}{2p}\big(\|\nabla{\bf u}\|_{L^2}+\|P-P(\tilde\rho)\|_{L^2}\big)^\frac{6-p}{2p},
\\ \label{2.4}
\|\nabla {\bf u}\|_{L^p}&\leq C\big(\|F\|_{L^p}+\|{\bf w}\|_{L^p}+\|P-P(\tilde\rho)\|_{L^p}\big),
\\ \label{2.5}
\|\nabla {\bf u}\|_{L^p}&\leq C\|\nabla{\bf u}\|_{L^2}^\frac{6-p}{2p}\big(\|\rho\dot{\bf u}\|_{L^2}+\|P-P(\tilde\rho)\|_{L^6}\big)^\frac{3p-6}{2p}.
\end{align}
\end{Lemma}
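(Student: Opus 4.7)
The plan is to reduce all four inequalities to three standard ingredients: the elliptic identities \eqref{1.5} combined with the $L^p$-continuity of Riesz-type Fourier multipliers for $p\in(1,\infty)$; the 3D Gagliardo--Nirenberg interpolation $\|f\|_{L^p}\le C\|f\|_{L^2}^{(6-p)/(2p)}\|f\|_{L^6}^{(3p-6)/(2p)}$ coupled with the Sobolev embedding $\|f\|_{L^6}\le C\|\nabla f\|_{L^2}$ in $\mathbb R^3$; and the Hodge-type bound $\|\nabla{\bf u}\|_{L^p}\le C(\|\divv{\bf u}\|_{L^p}+\|\curl{\bf u}\|_{L^p})$, again valid for $p\in(1,\infty)$.

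For \eqref{2.2}, I invert the Laplacian in \eqref{1.5} to represent $\nabla F$ and $\nabla{\bf w}$ as compositions of Riesz transforms acting on $\rho\dot{\bf u}$; these map $L^p\to L^p$ for any $p\in(1,\infty)$, covering the stated range $[2,6]$. For \eqref{2.4}, the definition \eqref{1.4} gives $(2\mu+\lambda)\divv{\bf u}=F+(P-P(\tilde\rho))$ while $\curl{\bf u}={\bf w}$, so the Hodge estimate finishes the job immediately.

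For \eqref{2.3}, note that \eqref{1.4} directly yields $\|F\|_{L^2}+\|{\bf w}\|_{L^2}\le C(\|\nabla{\bf u}\|_{L^2}+\|P-P(\tilde\rho)\|_{L^2})$. Interpolating the $L^p$-norm between $L^2$ and $L^6$ as above, and invoking \eqref{2.2} at $p=2$ to control the $\|\nabla F\|_{L^2}$ and $\|\nabla{\bf w}\|_{L^2}$ factors by $\|\rho\dot{\bf u}\|_{L^2}$, delivers the claim. Finally, \eqref{2.5} follows by applying the same interpolation to $\nabla{\bf u}$ itself, giving $\|\nabla{\bf u}\|_{L^p}\le C\|\nabla{\bf u}\|_{L^2}^{(6-p)/(2p)}\|\nabla{\bf u}\|_{L^6}^{(3p-6)/(2p)}$; the $L^6$ factor is controlled by \eqref{2.4} at $p=6$ together with the bound $\|F\|_{L^6}+\|{\bf w}\|_{L^6}\le C(\|\nabla F\|_{L^2}+\|\nabla{\bf w}\|_{L^2})\le C\|\rho\dot{\bf u}\|_{L^2}$ coming from Sobolev embedding and \eqref{2.2} at $p=2$.

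There is no substantial obstacle here; the lemma is essentially bookkeeping of Gagliardo--Nirenberg exponents together with standard Calder\'on--Zygmund and Hodge theory on the whole space. The only point worth a brief check is the applicability of the homogeneous Sobolev embedding $\dot H^1(\mathbb R^3)\hookrightarrow L^6(\mathbb R^3)$ to $F$, ${\bf w}$, and $\nabla{\bf u}$, which is guaranteed by the functional framework \eqref{2.1} and the far-field condition \eqref{1.3}.
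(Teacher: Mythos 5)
Your argument is correct and matches the standard approach that the paper merely gestures at (it defers to Huang--Li--Xin and the phrase ``standard $L^p$-estimate of elliptic system'' without supplying details): invert $\Delta$ in \eqref{1.5} to write $\nabla F$ and $\nabla{\bf w}$ as zero-order Riesz-type operators acting on $\rho\dot{\bf u}$, use the Hodge-type bound via $\divv$ and $\curl$ for \eqref{2.4}, and chain $L^2$--$L^6$ Gagliardo--Nirenberg with the homogeneous Sobolev embedding $\dot H^1(\mathbb R^3)\hookrightarrow L^6(\mathbb R^3)$ to produce \eqref{2.3} and \eqref{2.5}, with the decay at infinity needed for that embedding supplied by \eqref{2.1} and \eqref{1.3}.
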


We recall the well-known Gagliardo--Nirenberg inequality (see \cite[Theorem 12.87]{LG}).
 \begin{Lemma}\label{1interpolation}
 (Gagliardo--Nirenberg inequality, general case). Let $1 \leq$ $p, q \leq \infty, m \in \mathbb{N}, k \in \mathbb{N}_0$, with $0 \leq k<m$, and let $a$, $r$ be such that
\begin{align*}
0 \leq a \leq 1-\frac{k}{m}
\end{align*}
and
\begin{align*}
(1-a)\Big(\frac{1}{p}-\frac{m-k}{3}\Big)
+a\Big(\frac{1}{q}+\frac{k}{3}\Big)
=\frac{1}{r}\in[0,1].
\end{align*}
Then there exists a constant $C=C(m, p, q, a, k)>0$ such that
\begin{align}\label{2.6}
\left\|\nabla^k f\right\|_{L^r} \leq C\|f\|_{L^q}^a\left\|\nabla^m f\right\|_{L^p}^{1-a}
\end{align}
for every $f \in L^q\left(\mathbb{R}^3\right) \cap \dot{W}^{m, p}\left(\mathbb{R}^3\right)$(where the homogeneous Sobolev space $\dot{W}^{m, p}\left(\mathbb{R}^3\right)$ is the space of all functions $f\in  L_{\text{loc}}^1(\mathbb{R}^3)$ whose $\alpha$-th weak derivative $\partial^\alpha f$ belongs to $L^p(\mathbb{R}^3)$
with $|\alpha|=m$), with the following exceptional cases:

\vspace{0.1cm}

(i) If $k=0, m p<3$, and $q=\infty$, we assume that $f$ vanishes at infinity.

\vspace{0.2cm}

(ii) If $1<p<\infty$ and $m-k-\frac3p$ is a nonnegative integer, then \eqref{2.6} only holds for $0<a \leq 1-\frac{k}{m}$.
\end{Lemma}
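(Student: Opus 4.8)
\noindent\textbf{Proof strategy for Lemma~\ref{1interpolation}.}
Inequality \eqref{2.6} is classical; the plan is to follow \cite[Theorem~12.87]{LG}, so I only sketch the structure. First I would record that the exponent relation in the hypothesis is forced by scaling, and hence that there is nothing to optimize in the exponents: applying \eqref{2.6} to the rescaled function $f_\lambda:=f(\lambda\,\cdot)$ and using $\|\nabla^\ell f_\lambda\|_{L^s}=\lambda^{\ell-3/s}\|\nabla^\ell f\|_{L^s}$, the powers of $\lambda$ on the two sides must agree, which is precisely the identity $(1-a)(\frac1p-\frac{m-k}{3})+a(\frac1q+\frac k3)=\frac1r$; combined with $\frac1r\in[0,1]$ and the requirement that both ``extreme'' exponents make sense, this also determines $a$ and the admissible range $0\le a\le1-\frac km$.

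For the inequality itself in the non-exceptional regime, the cleanest route (and, up to reorganization, the one behind the cited reference) is a dyadic interpolation between the $L^q$-information and the $\dot W^{m,p}$-information. Concretely, I would decompose $\nabla^k f=\sum_{j\in\mathbb Z}\nabla^k\Delta_jf$ with the Littlewood--Paley blocks introduced above, and estimate each block by Bernstein's inequalities in two complementary ways, namely $\|\nabla^k\Delta_jf\|_{L^r}\lesssim 2^{j(k+\frac3q-\frac3r)}\|f\|_{L^q}$ (used on the frequency range where the exponent of $2^j$ is negative) and $\|\nabla^k\Delta_jf\|_{L^r}\lesssim 2^{-j(m-k-\frac3p+\frac3r)}\|\nabla^m f\|_{L^p}$ (used on the complementary range); summing the two geometric series and optimizing the splitting frequency then produces exactly $\|\nabla^k f\|_{L^r}\lesssim\|f\|_{L^q}^{a}\|\nabla^m f\|_{L^p}^{1-a}$, with the power $a$ read off from the exponent relation above. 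An equivalent classical alternative is to induct on $m$, reducing everything to the first-order inequality $\|f\|_{L^r}\le C\|f\|_{L^q}^{a}\|\nabla f\|_{L^p}^{1-a}$: for the easy sub-ranges this is the sub-critical Sobolev embedding $\dot W^{1,p}(\mathbb R^3)\hookrightarrow L^{r_0}(\mathbb R^3)$, $\frac1{r_0}=\frac1p-\frac13$, composed with the Hölder interpolation $\|f\|_{L^r}\le\|f\|_{L^q}^a\|f\|_{L^{r_0}}^{1-a}$, while the remaining sub-ranges and the step that raises the order of differentiation are handled by the Gagliardo/Loomis--Whitney computation (equivalently, by testing the equation for $\nabla f$ against $|\nabla f|^{p-2}\nabla f$ and integrating by parts).

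The genuine difficulty --- and the only place I expect more than routine work --- is the behavior at the borderline exponents, which is exactly where the two stated exceptions and their extra hypotheses are needed. In case (i) ($k=0$, $mp<3$, $q=\infty$) one cannot pass from $L^\infty$ down to $L^r$ at low frequencies, since Bernstein's inequality only goes the other way; instead one invokes the sub-critical embedding $\dot W^{m,p}(\mathbb R^3)\hookrightarrow L^{3p/(3-mp)}(\mathbb R^3)$ --- meaningful only because $f$ is assumed to vanish at infinity --- and then interpolates between $L^{3p/(3-mp)}$ and $L^\infty$. In case (ii) ($1<p<\infty$ and $m-k-\frac3p\in\mathbb N_0$) the endpoint $a=0$ would force $\frac1r=\frac1p-\frac{m-k}{3}$, a quantity which is $\le0$ and hence admissible only if it vanishes, i.e.\ $r=\infty$ and $m-k=\frac3p$; but then \eqref{2.6} with $a=0$ would assert the critical embedding $\dot W^{3/p,p}(\mathbb R^3)\hookrightarrow L^\infty(\mathbb R^3)$, which is false, so $a=0$ must be discarded and one keeps only $0<a\le1-\frac km$ as stated. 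For the purposes of the present paper it is of course enough to quote \cite[Theorem~12.87]{LG} directly.
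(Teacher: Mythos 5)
The paper does not prove this lemma: it simply quotes it, citing \cite[Theorem~12.87]{LG}. So there is no ``paper's own proof'' to compare against, and the student's closing remark --- that for the purposes of this paper one would just cite the reference --- is exactly what the authors do. That said, the proposed sketch is a reasonable outline of a genuine proof, so let me comment on its content.

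The scaling derivation of the exponent relation, the reduction to first order by induction, and the discussion of the two exceptional cases are all sound. In particular the diagnosis of case (ii) is correct: with $a=0$ the constraints force $m-k=3/p$ and $r=\infty$, which is precisely the false critical embedding $\dot W^{3/p,p}(\mathbb R^3)\hookrightarrow L^\infty$. One small gap in the Littlewood--Paley route as written: the two Bernstein bounds $\|\nabla^k\Delta_j f\|_{L^r}\lesssim 2^{j(k+3/q-3/r)}\|f\|_{L^q}$ and $\|\nabla^k\Delta_j f\|_{L^r}\lesssim 2^{-j(m-k-3/p+3/r)}\|\nabla^m f\|_{L^p}$ require $r\ge q$ and $r\ge p$ respectively (Bernstein raises, not lowers, the Lebesgue exponent on a frequency annulus). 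When $r<\max(p,q)$ the dyadic summation does not go through directly, and one needs either to pass through an intermediate $L^{r_0}$ with $r_0\ge\max(p,q)$ and then H\"older back down, or to use the Gagliardo/Loomis--Whitney slicing argument (which handles $p=q=1$ and the ``downward'' Lebesgue indices natively). The student mentions the slicing argument as an alternative, so the ingredient is present, but the dyadic computation as stated should not be presented as covering the general case. Since the paper itself cites the result rather than proving it, this is a remark on the sketch rather than a discrepancy with the paper.
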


 We also  need the following  embedding and interpolation estimates on  Besov spaces, cf. \cite{Sohinger1}.
\begin{Lemma}\label{lemma2.3}
(1) Let $0< s\leq\frac{3}{2}$ and $1\leq p< 2$ satisfying $\frac{1}{2}+\frac{s}{3}=\frac{1}{p}$, then there exists a constant $C=C(s, p)>0$ such that
\begin{align}\label{2.7}
\|f\|_{\dot B_{2,\infty}^{-s}}\leq C\|f\|_{L^p}.
\end{align}
(2) Let $s>0$ and $\ell\geq 0$, then there exists a constant $C=C(s, \ell)>0$ such that
\begin{align}\label{2.8}
\|\nabla^\ell f\|_{L^2}\leq C\|\nabla^{\ell+1}f\|_{L^2}^{1-\alpha} \|f\|_{\dot B_{2,\infty}^{-s}}^\alpha,\ \ \text{with}\ \alpha=\frac{1}{\ell+1+s}.
\end{align}
\end{Lemma}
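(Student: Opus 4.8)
Both estimates are classical facts about the Littlewood--Paley decomposition (they can be found, e.g., in \cite{Sohinger1}); the plan is to recover them from Bernstein's inequality together with the almost-orthogonality characterization of the $L^2$-based norms involved.

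For part (1), I would start from the definition $\|f\|_{\dot B^{-s}_{2,\infty}}=\sup_{q\in\mathbb Z}2^{-qs}\|\Delta_q f\|_{L^2}$. Since each $\Delta_q$ is convolution with a kernel whose $L^1$-norm is independent of $q$, Young's inequality gives $\|\Delta_q f\|_{L^p}\le C\|f\|_{L^p}$ for all $1\le p\le\infty$. As $\Delta_q f$ is frequency-localized to an annulus of radius $\sim 2^q$, Bernstein's inequality yields, for $1\le p< 2$,
\begin{align*}
2^{-qs}\|\Delta_q f\|_{L^2}\le C\,2^{-qs}2^{3q(\frac1p-\frac12)}\|\Delta_q f\|_{L^p}\le C\,2^{q(3(\frac1p-\frac12)-s)}\|f\|_{L^p}.
\end{align*}
The hypothesis $\frac1p-\frac12=\frac s3$ cancels the exponent of $2$, so the right-hand side is $\le C\|f\|_{L^p}$ uniformly in $q$, and taking the supremum over $q$ gives \eqref{2.7}.

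For part (2), I would use $\|\nabla^\ell f\|_{L^2}^2\simeq\sum_{q\in\mathbb Z}2^{2q\ell}\|\Delta_q f\|_{L^2}^2$ (valid for all $\ell\ge 0$, reading $\nabla^\ell$ as $\Lambda^\ell$ if $\ell$ is not an integer), and split the sum at a threshold $q=N$ to be chosen. On the low-frequency range I would bound $\|\Delta_q f\|_{L^2}\le 2^{qs}\|f\|_{\dot B^{-s}_{2,\infty}}$ and sum the geometric series (convergent since $\ell+s>0$) to get $\sum_{q\le N}2^{2q\ell}\|\Delta_q f\|_{L^2}^2\le C2^{2N(\ell+s)}\|f\|_{\dot B^{-s}_{2,\infty}}^2$; on the high-frequency range I would write $2^{2q\ell}=2^{-2q}2^{2q(\ell+1)}\le 2^{-2N}2^{2q(\ell+1)}$ for $q>N$ to get $\sum_{q>N}2^{2q\ell}\|\Delta_q f\|_{L^2}^2\le C2^{-2N}\|\nabla^{\ell+1}f\|_{L^2}^2$. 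Adding these and optimizing in $N$ --- i.e.\ choosing $2^{2N(\ell+1+s)}=\|\nabla^{\ell+1}f\|_{L^2}^2/\|f\|_{\dot B^{-s}_{2,\infty}}^2$, assuming as we may that both quantities are finite and positive --- balances the two contributions and produces
\begin{align*}
\|\nabla^\ell f\|_{L^2}^2\le C\|f\|_{\dot B^{-s}_{2,\infty}}^{\frac{2}{\ell+1+s}}\|\nabla^{\ell+1}f\|_{L^2}^{2-\frac{2}{\ell+1+s}},
\end{align*}
which is \eqref{2.8} with $\alpha=\frac1{\ell+1+s}$.

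I do not expect any genuine obstacle: all the series involved are geometric, and the only analytic inputs are Bernstein's inequality and the uniform $L^p$-boundedness of the dyadic blocks $\Delta_q$. The one place to be careful is the bookkeeping of dyadic exponents in the optimization of $N$ in part (2); a convenient alternative that avoids choosing $N$ explicitly is to apply Young's inequality $ab\le\varepsilon a^{r}+C_\varepsilon b^{r'}$ with exponents tuned to $\ell+1+s$ directly to the two pieces of the split sum.
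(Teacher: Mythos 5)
Your proof is correct and follows exactly the standard Littlewood--Paley route that the cited reference (Sohinger--Strain \cite{Sohinger1}) uses: Bernstein's inequality plus uniform $L^p$-boundedness of the dyadic blocks for part (1), and a low/high frequency split of the $\dot H^\ell$-characterization with an optimized threshold for part (2). The paper itself gives no proof and simply cites \cite{Sohinger1}, so there is nothing further to compare; the only cosmetic caveat is that your optimal $N$ should be replaced by its integer part, which changes nothing up to constants.
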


Finally, the following Beale--Kato--Majda type inequality will be used to estimate $\|\nabla\varrho\|_{L^6}$ and $\|\nabla{\bf u}\|_{L^\infty}$, whose proof can be found in \cite[Lemma 2.3]{HLZ11}.
\begin{Lemma}\label{lemma2.5}
Suppose that $3<q<\infty$, then there is a positive constant $C$ depending only on $q$ such that, for any $\nabla{\bf u} \in L^2(\mathbb R^3)\cap D^{1,q}(\mathbb R^3)$,
\begin{align}\label{2.9}
\|\nabla{\bf u}\|_{L^\infty}\le C\big(\|\divv{\bf u}\|_{L^\infty}+\|{\bf w}\|_{L^\infty}\big)\ln\big(\text{e}+\|\nabla^2{\bf u}\|_{L^q}\big)+C\|\nabla{\bf u}\|_{L^2}+C.
\end{align}
\end{Lemma}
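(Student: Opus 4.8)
The plan is to reconstruct $\nabla{\bf u}$ from its divergence $\divv{\bf u}$ and its curl ${\bf w}=\nabla\times{\bf u}$ through the Newtonian potential, and then run a three–scale splitting of the resulting singular integral. First I would record the elementary identity $-\Delta{\bf u}=\nabla\times{\bf w}-\nabla\divv{\bf u}$. Since $\nabla{\bf u}\in L^2(\R^3)\cap D^{1,q}(\R^3)$ with $q>3$, in particular ${\bf u}$ decays at infinity in the sense needed to drop boundary terms, so convolving with the fundamental solution $E(x)=1/(4\pi|x|)$ of $-\Delta$ and integrating by parts once yields a pointwise representation
\[
\nabla{\bf u}(x)=\text{p.v.}\!\int_{\R^3}\mathcal K(x-y)\,G(y)\,\mathrm dy+c\,G(x),\qquad G:=(\divv{\bf u},{\bf w}),
\]
where $\mathcal K$ is a matrix–valued kernel built from $\nabla^2E$: it is smooth away from the origin, homogeneous of degree $-3$, and its restriction to the unit sphere has zero mean (the Dirac component of $\nabla^2E$ being absorbed into the local term $c\,G(x)$). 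The purely local contribution $c\,G(x)$ is bounded at once by $C(\|\divv{\bf u}\|_{L^\infty}+\|{\bf w}\|_{L^\infty})$.

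Next, fix $x$ (take $x=0$ without loss of generality) and split the principal–value integral as $\int_{|y|<\delta}+\int_{\delta\le|y|\le1}+\int_{|y|>1}$ for a parameter $\delta\in(0,1]$ to be chosen. On the inner ball the zero–mean property of $\mathcal K$ over spheres allows me to subtract $G(0)$; since $q>3$, Morrey's embedding $\dot W^{1,q}(\R^3)\hookrightarrow \dot C^{0,1-3/q}(\R^3)$ gives $|G(y)-G(0)|\le C|y|^{1-3/q}\|\nabla G\|_{L^q}\le C|y|^{1-3/q}\|\nabla^2{\bf u}\|_{L^q}$, and because $|\mathcal K(y)|\le C|y|^{-3}$ and $-3/q>-1$ the resulting radial integral converges, giving the bound $C\delta^{1-3/q}\|\nabla^2{\bf u}\|_{L^q}$. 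On the annulus $\{\delta\le|y|\le1\}$ the estimate $|\mathcal K(y)|\le C|y|^{-3}$ produces the logarithmic contribution $C\|G\|_{L^\infty}\ln(1/\delta)\le C(\|\divv{\bf u}\|_{L^\infty}+\|{\bf w}\|_{L^\infty})\ln(1/\delta)$. On the exterior region $\{|y|>1\}$, Cauchy--Schwarz together with $\mathcal K\in L^2(\{|y|>1\})$ yields $C\|G\|_{L^2}\le C\|\nabla{\bf u}\|_{L^2}$.

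Then I would optimize in $\delta$: choosing $\delta=\min\{1,(\mathrm e+\|\nabla^2{\bf u}\|_{L^q})^{-1/(1-3/q)}\}$ forces $\delta^{1-3/q}\|\nabla^2{\bf u}\|_{L^q}\le1$ and $\ln(1/\delta)\le\frac{1}{1-3/q}\ln(\mathrm e+\|\nabla^2{\bf u}\|_{L^q})$. Collecting the four bounds and using $\ln(\mathrm e+\cdot)\ge1$ to absorb the bare $\|G\|_{L^\infty}$ term into the logarithmic one gives exactly \eqref{2.9}. As an alternative that avoids the explicit kernel, one may decompose $\nabla{\bf u}=\sum_q\Delta_q\nabla{\bf u}$, bound the low frequencies via Bernstein and $\|\nabla{\bf u}\|_{L^2}$, the high frequencies via Bernstein and $\|\nabla^2{\bf u}\|_{L^q}$, and the intermediate block–sum by $\sum\|\Delta_q(\divv{\bf u},{\bf w})\|_{L^\infty}\lesssim(\text{number of blocks})\cdot(\|\divv{\bf u}\|_{L^\infty}+\|{\bf w}\|_{L^\infty})$, using that $\nabla{\bf u}$ is obtained from $(\divv{\bf u},\curl{\bf u})$ by a homogeneous degree–$0$ Fourier multiplier (so $\mathcal Q$, $\mathcal P$ and each $\Delta_q$ interact boundedly with it); optimizing the two cutoffs reproduces the same inequality.

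The main obstacle is the first step: carefully justifying the singular–integral representation of $\nabla{\bf u}$ on all of $\R^3$ under the sole hypothesis $\nabla{\bf u}\in L^2\cap D^{1,q}$ — namely the decay at infinity needed to discard boundary contributions, the precise structure of $\mathcal K$ (including the Dirac mass concealed in $\nabla^2E$), and the legitimacy of the cancellation step in the innermost region. Once the representation is secured, the three–scale estimate is entirely routine.
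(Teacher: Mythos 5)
Your proposal is correct, and it follows essentially the same route as the proof the paper points to, namely \cite[Lemma~2.3]{HLZ11}: represent $\nabla\mathbf u$ by a Calder\'on--Zygmund singular integral in $(\divv\mathbf u,\mathbf w)$ coming from $-\Delta\mathbf u=\nabla\times\mathbf w-\nabla\divv\mathbf u$, split into inner ball, middle annulus and far field, use the zero mean of the degree $-3$ kernel plus the Morrey embedding $\dot W^{1,q}\hookrightarrow C^{0,1-3/q}$ ($q>3$) in the inner region, the kernel size $|y|^{-3}$ for the logarithm on the annulus, and $L^2$--$L^2$ duality at infinity, then optimize the inner radius $\delta$. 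Your bookkeeping of exponents is right, the choice $\delta\sim(\mathrm e+\|\nabla^2\mathbf u\|_{L^q})^{-1/(1-3/q)}$ closes, and you correctly identify the only real technical burden as the rigorous justification of the singular-integral representation (and the Dirac part hidden in $\nabla^2E$) under the hypothesis $\nabla\mathbf u\in L^2\cap D^{1,q}$; in \cite{HLZ11} this is handled by the same classical device, so there is no substantive divergence from the paper's argument. The Littlewood--Paley variant you sketch as an alternative (low/high/middle frequency split with Bernstein plus counting the middle blocks) is a genuinely different but equally standard route to the same inequality; it trades the pointwise kernel cancellation for frequency localization and avoids the decay-at-infinity discussion, at the cost of invoking the boundedness of the degree-$0$ Fourier multiplier that reconstructs $\nabla\mathbf u$ from $(\divv\mathbf u,\curl\mathbf u)$ on each dyadic block.
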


\section{A priori estimates}\label{sec3}

In this section, we will establish some necessary {\it a priori} bounds for classical solutions $(\rho,\mathbf{u})$ to the Cauchy problem \eqref{1.1}--\eqref{1.3} with $\tilde\rho=1$. In what follows, let $\varrho\overset{\text{def}}=\rho-1$ and $\sigma\overset{\text{def}}=P(\rho)-P(\tilde\rho)=\rho^\gamma-1$. Noting that
\begin{align*}
\sigma=\varrho\int_0^1\gamma(\theta\rho+(1-\theta))^{\gamma-1}\mathrm{d}\theta,
\end{align*}
hence, if $\rho\le M$, one has that
\begin{align}\label{3.1}
|\sigma|\sim |\varrho|.
\end{align}
Setting
\begin{align*} H(\rho)=\frac{1}{2\mu+\lambda}\Big[\frac{\gamma^2}{2(2\gamma-1)}(\rho-1)^2-\Big(\frac{\gamma-1}{2(2\gamma-1)}\rho^\gamma
+\frac{\gamma(\gamma-1)}{2(2\gamma-1)}\rho-\frac{\gamma^2+2\gamma-1}{2(2\gamma-1)}\Big)G(\rho)\Big],
\end{align*}
due to \eqref{1.7} and $\rho\le M$, we get that
\begin{align}\label{3.2}
|H(\rho)|\le CG(\rho).
\end{align}

The following Proposition \ref{pro3.1} deals with the uniform-in-time bounds and the dissipation inequality, whose proof can be found in \cite[Proposition 2.2]{He}.

\begin{Proposition}\label{pro3.1}
Let $2\mu>\lambda$, $\tilde\rho=1$, and $(\rho,{\bf u})$ be a classical solution of the Cauchy problem \eqref{1.1}--\eqref{1.3} with $0\le \rho\le \bar\rho$, then there exist constants $A_i\ (i=1,\cdots,6)$ such that
\begin{align}\label{3.3}
A_1&\|(\rho^\frac{1}{4}{\bf u})(t)\|^4_{L^4}+A_2\Big(\mu\|\nabla{\bf u}(t)\|_{L^2}^2+(\mu+\lambda)\|\divv{\bf u}(t)\|_{L^2}^2
-\int (\sigma\divv {\bf u})(t)\mathrm{d}{\bf x}+\int H(\rho)(t)\mathrm{d}{\bf x}\Big)\nonumber
\\&+A_3\|\sigma(t)\|_{L^6}^2+A_4\Big(\int G(\rho)(t)\mathrm{d}{\bf x}+\|\sqrt{\rho}{\bf u}(t)\|_{L^2}^2\Big)+A_5\|(\sqrt{\rho}\dot{\bf u})(t)\|_{L^2}^2\nonumber
\\&+A_6\int_0^t\big(\||{\bf u}||\nabla{\bf u}|\|_{L^2}^2+\|\nabla{\bf u}\|_{L^2}^2+\|\sqrt{\rho}{\bf u}_t\|_{L^2}^2+\|\nabla {\bf w}\|_{L^2}^2
+\|\nabla F\|_{L^2}^2+\|\sigma\|_{L^6}^2\nonumber
\\&+\|\nabla{\bf u}\|^2_{L^6}+\|\nabla\dot{\bf u}\|^2_{L^2}+\| F\|_{W^{1,6}}^2+\| {\bf w}\|_{W^{1,6}}^2\big)\mathrm{d}\tau\le C(M,K),
\end{align}
where
\begin{align}\label{3.4}
A_1&\|\rho^\frac{1}{4}{\bf u}\|^4_{L^4}+A_2\Big(\mu\|\nabla{\bf u}\|_{L^2}^2+(\mu+\lambda)\|\divv{\bf u}\|_{L^2}^2-\int \sigma\divv {\bf u}\mathrm{d}{\bf x}+\int H(\rho)\mathrm{d}{\bf x}\Big)\nonumber
\\&+A_3\|\sigma\|_{L^6}^2+A_4\Big(\int G(\rho)\mathrm{d}{\bf x}+\|\sqrt{\rho}{\bf u}\|_{L^2}^2\Big)+A_5\|\sqrt{\rho}\dot{\bf u}\|_{L^2}^2\nonumber
\\ \sim&\  \|\rho-1\|_{L^2}+\|\sqrt{\rho}{\bf u}\|_{L^2}^2+\|\rho^\frac{1}{4}{\bf u}\|^4_{L^4}+\|\nabla{\bf u}\|_{L^2}^2+\|\rho-1\|_{L^6}
+\|\sqrt{\rho}\dot{\bf u}\|_{L^2}^2.
\end{align}
\end{Proposition}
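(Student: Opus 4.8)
\textbf{Proof plan for Proposition \ref{pro3.1}.}

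The plan is to derive \eqref{3.3} by combining the standard energy balances at the three hierarchical levels (basic energy, first-order estimates involving the effective viscous flux and vorticity, and the estimate on $\|\sqrt\rho\dot{\bf u}\|_{L^2}$), then constructing the functional with carefully chosen constants $A_1,\dots,A_6$ so that the undesirable top-order terms cancel and only a coercive dissipation remains. First I would multiply \eqref{1.1}$_2$ by ${\bf u}$ and integrate by parts to obtain the classical energy identity
\begin{align*}
\frac{\mathrm{d}}{\mathrm{d}t}\Big(\tfrac12\|\sqrt\rho{\bf u}\|_{L^2}^2+\int G(\rho)\,\mathrm d{\bf x}\Big)+\mu\|\nabla{\bf u}\|_{L^2}^2+(\mu+\lambda)\|\divv{\bf u}\|_{L^2}^2=0,
\end{align*}
using the mass equation \eqref{1.1}$_1$ and \eqref{1.6} to handle the pressure work term. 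Next I would multiply \eqref{1.1}$_2$ by $\dot{\bf u}$; after integration by parts, the viscous terms produce $\frac{\mathrm d}{\mathrm dt}$ of $\big(\mu\|\nabla{\bf u}\|_{L^2}^2+(\mu+\lambda)\|\divv{\bf u}\|_{L^2}^2\big)/2$ together with the quadratic-in-$\nabla{\bf u}$ commutator terms $\||{\bf u}||\nabla{\bf u}|\|_{L^2}^2$, while the pressure term is rewritten via \eqref{1.1}$_1$ as a perfect time derivative of $\int\sigma\divv{\bf u}\,\mathrm d{\bf x}$ plus lower-order terms, and the left side controls $\|\sqrt\rho\dot{\bf u}\|_{L^2}^2$; this is where the auxiliary function $H(\rho)$ enters, absorbing the remainder of the pressure manipulation (cf.\ the definition of $H$ and \eqref{3.2}). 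A third estimate, obtained by applying $\partial_t+\divv({\bf u}\,\cdot)$ to \eqref{1.1}$_2$ and testing against $\dot{\bf u}$, gives $\frac{\mathrm d}{\mathrm dt}\|\sqrt\rho\dot{\bf u}\|_{L^2}^2$ controlled by $\|\nabla\dot{\bf u}\|_{L^2}^2$ on the good side and $\|\nabla{\bf u}\|_{L^4}^4$-type terms on the bad side, the latter being handled by \eqref{2.5} and \eqref{2.2}.

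Then I would assemble: the $L^p$ elliptic estimates of Lemma \ref{Lemm2.2} convert $\|\rho\dot{\bf u}\|_{L^2}$-type quantities into control of $\|\nabla F\|_{L^2}$, $\|\nabla{\bf w}\|_{L^2}$, and via \eqref{2.4}, $\|\nabla{\bf u}\|_{L^p}$ for $p\in[2,6]$; the $L^6$-norm of $\sigma$ is tied to $\|F\|_{L^6}$ and $\|\nabla{\bf u}\|_{L^6}$ through \eqref{1.4} and \eqref{2.3}. The $F$ and ${\bf w}$ equations \eqref{1.5} together with \eqref{2.2} supply the $W^{1,6}$ dissipation. Forming a linear combination $A_1(\text{Bogovskii-type }L^4\text{ bound})+A_2(\text{first-order energy})+A_3\|\sigma\|_{L^6}^2+A_4(\text{basic energy})+A_5(\|\sqrt\rho\dot{\bf u}\|^2\text{ estimate})$, I would choose $A_4\gg A_2\gg A_3\gg A_5\gg A_1$ (up to reordering as dictated by the cross terms) so that every term with the wrong sign on the right-hand side of the summed differential inequality is dominated, and the time integral of the bracketed dissipation in \eqref{3.3} appears with a positive coefficient $A_6$. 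Integrating in time from $0$ to $t$ and bounding the data by $K$ and the upper bound by $M$ yields \eqref{3.3}; the equivalence \eqref{3.4} follows because $H(\rho)$ and $G(\rho)$ are comparable to $(\rho-1)^2$ by \eqref{1.7} and \eqref{3.2}, $\int\sigma\divv{\bf u}\,\mathrm d{\bf x}$ is absorbed into $\varepsilon\|\nabla{\bf u}\|_{L^2}^2+C_\varepsilon\|\rho-1\|_{L^2}^2$ using \eqref{3.1}, and $|\sigma|\sim|\varrho|$ converts the $\sigma$-norms into $\rho-1$ norms.

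The main obstacle is the bookkeeping of signs and magnitudes in the linear combination: the condition $2\mu>\lambda$ is precisely what guarantees that the quadratic form arising from the viscous terms in the $\dot{\bf u}$-tested identity (combined with the commutator terms $\||{\bf u}||\nabla{\bf u}|\|_{L^2}^2$) is coercive, so one must track exactly how the viscosity coefficients enter each integration by parts — in particular the term $(\mu+\lambda)\int\divv{\bf u}\,{\bf u}\cdot\nabla\divv{\bf u}$ versus $\mu\int\nabla{\bf u}:\nabla({\bf u}\cdot\nabla{\bf u})$ — and verify that the residual has the right sign. A secondary difficulty is that, unlike on a bounded domain, there is no Poincaré inequality, so $\|\rho-1\|_{L^2}$ and $\|\rho-1\|_{L^6}$ cannot be absorbed into gradient terms and must instead be retained through the Bogovskii-type $L^4$ functional and the coercive potential $\int G(\rho)\,\mathrm d{\bf x}$; keeping these pressure norms on the left throughout the argument, rather than attempting to estimate them, is the structural point that makes \eqref{3.4} hold. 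Since the full details are carried out in \cite[Proposition 2.2]{He}, I would at this point simply cite that reference and record the statement as above.
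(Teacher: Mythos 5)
Your proposal matches the paper's treatment: the paper does not prove Proposition \ref{pro3.1} itself but cites \cite[Proposition~2.2]{He}, exactly as you do at the end. Your interpolated sketch (basic energy identity, momentum equation tested against $\dot{\bf u}$ with $H(\rho)$ absorbing the pressure remainder, a third estimate on $\|\sqrt\rho\dot{\bf u}\|_{L^2}^2$, elliptic control of $F$ and ${\bf w}$, and a hierarchical linear combination) is a fair summary of that reference's argument, modulo the minor mislabeling of the $\|\rho^{1/4}{\bf u}\|_{L^4}^4$ functional as ``Bogovskii-type'' — it comes from testing against $|{\bf u}|^2{\bf u}$, not from a Bogovskii operator, which is unavailable on $\mathbb R^3$ anyway.
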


We have the following uniform positive lower bound of $\rho$.

\begin{Lemma}\label{lemma3.1}
Under the assumptions of Theorem \ref{thm1.1}, there exists a
 positive constant $c_1$ independent of $t$ such that
\begin{align}\label{3.5}
\inf\limits_{{\bf x}\in\mathbb R^3}\rho({\bf x},t)\ge c_1,
\end{align}
and
\begin{align}\label{3.6}
\lim\limits_{t\rightarrow\infty}\|\rho(\cdot,t)-1\|_{L^\infty}=0.
\end{align}
\end{Lemma}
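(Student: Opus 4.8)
\textbf{Proof proposal for Lemma \ref{lemma3.1}.}

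\emph{Step 1: time-dependent lower bound via $\ln\rho$.} Following Desjardins, I would first rewrite the mass equation \eqref{1.1}$_1$ in the form
$(\ln\rho)_t+{\bf u}\cdot\nabla\ln\rho+\divv{\bf u}=0$,
and use the definition \eqref{1.4} of the effective viscous flux to write $\divv{\bf u}=\frac{1}{2\mu+\lambda}(F+\sigma)$. Evaluating along particle trajectories $\frac{d}{dt}X(t;{\bf x})={\bf u}(X(t;{\bf x}),t)$, this gives
\[
\ln\rho(X(t),t)=\ln\rho_0({\bf x})-\frac{1}{2\mu+\lambda}\int_0^t\big(F+\sigma\big)(X(\tau),\tau)\,\mathrm{d}\tau.
\]
Since $\sigma=\rho^\gamma-1\ge -1$ is bounded below, and since $\|F(\tau)\|_{L^\infty}$ is finite for each fixed $\tau$ (classical solution), this already yields a time-\emph{dependent} positive lower bound $\rho(\cdot,t)\ge c(t)>0$. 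This is the easy part.

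\emph{Step 2: a large-time bound on $\|\varrho\|$ via Lagrangian coordinates and contradiction.} The crux is to upgrade this to a uniform-in-time bound, and for that I expect the argument to proceed by contradiction, exactly as the strategy paragraph indicates (aiming at the analogues of \eqref{3.3} and \eqref{3.10}). The dissipation inequality from Proposition \ref{pro3.1}, namely the finiteness of $\int_0^\infty\|\nabla F\|_{L^2}^2\,\mathrm{d}\tau$, $\int_0^\infty\|F\|_{W^{1,6}}^2\,\mathrm{d}\tau$ and $\int_0^\infty\|\sigma\|_{L^6}^2\,\mathrm{d}\tau$, forces $\|F(\tau)\|_{L^\infty}+\|\sigma(\tau)\|_{L^6}$ to be small on a set of times of infinite measure. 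The plan is: suppose $\|\varrho(t_k)-0\|$ in some norm fails to go to zero (or $\inf\rho$ is not bounded below); pick a sequence of times $t_k\to\infty$; use the mass equation in Lagrangian form to compare $\rho$ at nearby times and propagate the smallness of $F,\sigma$ (available from the convergent time integral via a Chebyshev/mean-value selection of good times) into a bound $\|\varrho(t)-1\|\le$ small for all large $t$; then feed this back into the $\ln\rho$ representation of Step 1 over a \emph{bounded} time window to conclude $\rho\ge\underline\rho>0$ uniformly. The key point that makes this work in the whole space (unlike \cite{WLZ}) is that one never needs Poincar\'e: $\|F\|_{L^\infty}$ is controlled by $\|F\|_{W^{1,6}}$ through Sobolev embedding, and $\|F\|_{W^{1,6}}$ sits inside the dissipation integral in \eqref{3.3}.

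\emph{Step 3: the decay \eqref{3.6}.} Once the uniform lower bound \eqref{3.5} is in hand, so that $0<c_1\le\rho\le M$, the potential-energy equivalence \eqref{1.7}, the pressure equivalence \eqref{3.1}, and the uniform bound on the higher-order norms (from Proposition \ref{pro3.1}, or bootstrapped) put $\varrho(t)$ in a fixed ball of, say, $H^1\cap L^6$. Then I would argue: the dissipation integral $\int_0^\infty\|\sigma(\tau)\|_{L^6}^2\,\mathrm{d}\tau<\infty$ together with a uniform bound on $\frac{d}{dt}\|\sigma\|_{L^6}^2$ (obtained by differentiating and using \eqref{1.1}, the uniform bounds, and \eqref{3.3}) forces $\|\sigma(t)\|_{L^6}\to0$, hence $\|\varrho(t)\|_{L^6}\to0$ by \eqref{3.1}; interpolating $\|\varrho\|_{L^\infty}\lesssim\|\varrho\|_{L^6}^{\theta}\|\nabla\varrho\|_{L^6}^{1-\theta}$ (Gagliardo--Nirenberg, Lemma \ref{1interpolation}) against the uniform bound on $\|\nabla\varrho\|_{L^6}$ then gives \eqref{3.6}.

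\emph{Main obstacle.} I expect Step 2 — turning the time-dependent lower bound into a time-\emph{independent} one — to be the real difficulty. The delicate point is that the dissipation estimate only controls $F$ and $\sigma$ in an $L^2$-in-time averaged sense, so one cannot directly bound $\int_0^t(F+\sigma)\,\mathrm{d}\tau$ uniformly in $t$; one must exploit the structure of the mass equation (comparing $\rho$ across a short sliding time window anchored at "good" times where $\|F\|_{L^\infty}$ is small) and the contradiction hypothesis together, which is exactly where the careful Lagrangian-coordinate analysis enters. Handling the far-field condition $\rho\to1$ when tracking trajectories, and ensuring the selection of good times is compatible with the contradiction scenario, are the technical heart of the argument.
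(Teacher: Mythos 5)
Your Step 1 (the $\ln\rho$ reformulation and the time-dependent lower bound) matches the paper exactly. For Step 2 you have the right overall architecture (contradiction, Lagrangian coordinates, $\|F\|_{L^\infty}\lesssim\|F\|_{W^{1,6}}$ inside the dissipation integral), but you have not identified the structural observation that actually closes the argument. The paper does not select ``good times'' \`a la Chebyshev; instead it rewrites the mass equation as
\begin{align*}
\frac{\mathrm d}{\mathrm dt}(\rho-1)+\frac{1}{2\mu+\lambda}\,\rho(\rho^\gamma-1)=-\frac{1}{2\mu+\lambda}\,\rho F
\end{align*}
and notes that, on any interval where a particle has $\rho\in[\tilde c/2,\tilde c]$ with $\tilde c^\gamma\le 1/2$, the stabilizing drift $\rho(\rho^\gamma-1)$ contributes a term whose magnitude grows \emph{linearly} in the length $T_3-T_2$ of the interval, whereas the forcing satisfies $\left|\int_{T_2}^{T_3}\rho F\,\mathrm d\tau\right|\le \tilde c\,\delta^{1/2}(T_3-T_2)^{1/2}$ by Cauchy--Schwarz against the tail of the dissipation integral. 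The linear-vs-square-root competition is what produces the contradiction for $\delta$ small, and it has no analogue in your sketch; your ``compare $\rho$ at nearby good times'' idea would still need this ODE comparison to control the excursion of $\rho$ between good times, so you would be reinventing the paper's key step.

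For Step 3 there is a genuine circularity. You propose to deduce $\|\varrho(t)\|_{L^\infty}\to 0$ from $\|\varrho(t)\|_{L^6}\to 0$ by interpolating against a uniform bound on $\|\nabla\varrho\|_{L^6}$; but that gradient bound is precisely the content of Lemma~\ref{lemma3.2}, which is established \emph{after} and \emph{using} Lemma~\ref{lemma3.1} (its proof needs both \eqref{3.5} and \eqref{3.6}). Nothing in Proposition~\ref{pro3.1} gives $\|\nabla\varrho\|_{L^6}$ uniformly in time, so your interpolation is not available at this stage. The paper bypasses gradients entirely: once \eqref{3.5} is known, multiply the trajectory ODE \eqref{3.8} by $\rho-1$, use $\rho(\rho^\gamma-1)(\rho-1)\ge 2c_2(\rho-1)^2$ (which is where the lower bound is essential) to get a damped scalar ODE
\begin{align*}
\frac{\mathrm d}{\mathrm dt}(\rho-1)^2+c_2(\rho-1)^2\le C\|F\|_{W^{1,6}}^2,
\end{align*}
and then conclude by Gr\"onwall and the usual convolution splitting at $t/2$, using $\|F\|_{W^{1,6}}\in L^2(0,\infty)$. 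This is a pointwise-along-trajectory argument, needs no spatial regularity of $\varrho$, and that is exactly what makes it legally usable here.
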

\begin{proof} Let ${\bf y}\in\mathbb
R^3$ and define the corresponding particle path ${\bf x}(t,{\bf y})$ by
\begin{align*}
\begin{cases}
\frac{ \mathrm{d}{\bf x}(t,{\bf y})}{\mathrm{d}t}={\bf u}({\bf x}(t,{\bf y}),t),\\
{\bf x}(t_0,{\bf y})={\bf y},
\end{cases}
\end{align*}
then we rewrite the mass equation \eqref{1.1}$_1$ as
\begin{align*}
\frac{\mathrm{d}}{\mathrm{d}t}\ln \rho=-\divv {\bf u},
\end{align*}
which together with the definition of $F$ in \eqref{1.4}, \eqref{1.8}, and \eqref{3.3} implies that
\begin{align}\label{wz5}
\rho(t,{\bf x})\ge \rho_0\text{e}^{{-\int_0^t}\|\divv {\bf u}\|_{L^\infty}\mathrm{d}\tau}\ge c_0\text{e}^{{-C\int_0^t}\|(\sigma,F)\|_{L^\infty}\mathrm{d}\tau}\ge c_0\text{e}^{-C(t+1)}.
\end{align}
Moreover, due to \eqref{3.3}, for any $\delta>0$, there exists a constant $T_1>0$ such that, for any $t\ge T_1$,
\begin{align}\label{3.7}
&\int_t^\infty\big(\|\nabla{\bf u}\|_{L^2}^2+\|\sqrt{\rho}{\bf u}_t\|_{L^2}^2+\|\nabla {\bf w}\|_{L^2}^2+\|\nabla F\|_{L^2}^2+\|\sigma\|_{L^6}^2+\|\nabla{\bf u}\|^2_{L^6}+\|\nabla\dot{\bf u}\|^2_{L^2}\nonumber\\&\ \ \ +\| F\|_{W^{1,6}}^2+\| {\bf w}\|_{W^{1,6}}^2\big)\mathrm{d}\tau\le \delta.
\end{align}

Next, we argue by contradiction to show \eqref{3.5}. Indeed, if \eqref{3.5} is false, then there exists a time $T_2\ge T_1$ such that
\begin{align}\label{wz1}
0<\tilde{c}\triangleq\rho({\bf x}(T_2),T_2)\le \Big(\frac{1}{2}\Big)^\frac{1}{\gamma}.
\end{align}
Choosing a minimal value of $T_3>T_2$ such that $\rho({\bf x}(T_3),T_3)=\frac{\tilde{c}}{2}$, so one has that
\begin{align}\label{wz2}
\rho({\bf x}(t),t)\in[\tilde{c}/2,\tilde{c}]\ \ \text{for}\ \ t\in[T_2,T_3].
\end{align}
By the definition of  $F$ in \eqref{1.4}, we rewrite \eqref{1.1}$_1$ as
\begin{align}\label{3.8}
\frac{\mathrm{d}}{\mathrm{d}t}(\rho-1)+\frac{1}{2\mu+\lambda}\rho(\rho^\gamma-1)
=-\frac{1}{2\mu+\lambda}\rho F.
\end{align}
Integrating \eqref{3.8} along particle
trajectories from  $T_2$ to $T_3$,
abbreviating $\rho({\bf x}(t),t)$ by $\rho(t)$ for convenience, we get that
\begin{align}\label{3.9}
(2\mu+\lambda)(\rho-1)\Big{|}^{T_3}_{T_2}+\int^{T_3}_{T_2}
\rho(\rho^\gamma-1)\mathrm{d}\tau=-\int^{T_3}_{T_2}\rho F({\bf x}(\tau),\tau)\mathrm{d}\tau.
\end{align}
By \eqref{wz1} and \eqref{wz2}, one has that
\begin{align}\label{3.10}
(2\mu+\lambda)(\rho-1)\Big{|}^{T_3}_{T_2}+\int^{T_3}_{T_2}\rho(\rho^\gamma-1)
\mathrm{d}\tau
&\le -\frac{\tilde{c}(2\mu+\lambda)}{2}+\int^{T_3}_{T_2}\frac{\tilde{c}}{2}
\Big(\frac{1}{2}-1\Big)\mathrm{d}\tau\nonumber
\\&= -\frac{\tilde{c}(2\mu+\lambda)}{2}-\frac{\tilde{c}}{4}(T_3-T_2).
\end{align}
In virtue of \eqref{3.7} and \eqref{wz2}, we deduce that
\begin{align}\label{3.11}
-\int^{T_3}_{T_2}\rho F({\bf x}(\tau),\tau)\mathrm{d}\tau
\ge -\tilde{c}\int^{T_3}_{T_2} \|F\|_{L^\infty}\mathrm{d}\tau
\ge -C\tilde{c}\int^{T_3}_{T_2} \|F\|_{W^{1,6}}\mathrm{d}\tau
\ge -C\tilde{c}\delta^\frac{1}{2}(T_3-T_2)^\frac{1}{2}.
\end{align}
As a result, substituting \eqref{3.10} and \eqref{3.11} into \eqref{3.9}, we derive that
\begin{align*}
-\frac{\tilde{c}(2\mu+\lambda)}{2}-\frac{\tilde{c}}{4}(T_3-T_2)\ge -C\tilde{c}\delta^\frac{1}{2}(T_3-T_2)^\frac{1}{2},
\end{align*}
which is impossible if $\delta$ is small enough. Thus, the desired \eqref{3.5} holds.

Now we turn to prove \eqref{3.6}. Multiplying \eqref{3.8} by $\rho-1$, we arrive at
\begin{align*}
\frac{\mathrm{d}}{\mathrm{d}t}(\rho-1)^2+\frac{1}{2\mu+\lambda}\rho(\rho^\gamma-1)(\rho-1)
=-\frac{1}{2\mu+\lambda}\rho (\rho-1)F.
\end{align*}
By \eqref{3.1} and \eqref{3.5}, there exists a positive constant $c_2$ such that
\begin{align*}
\frac{1}{2\mu+\lambda}\rho(\rho^\gamma-1)(\rho-1)\ge 2c_2(\rho-1)^2.
\end{align*}
Then, by Cauchy--Schwarz inequality, we conclude that
\begin{align}\label{wz6}
\frac{\mathrm{d}}{\mathrm{d}t}(\rho-1)^2+c_2(\rho-1)^2\le C\|F\|_{L^\infty}^2\le C\|F\|_{W^{1,6}}^2,
\end{align}
which together with Gronwall inequality shows that
\begin{align*}
(\rho(t)-1)^2&\le \text{e}^{-c_2t}(\rho(0)-1)^2+C\int_0^t\text{e}^{-c_2(t-\tau)}\|F(\tau)\|_{W^{1,6}}^2
\mathrm{d}\tau \\
&\le \text{e}^{-c_2t}(\rho(0)-1)^2+C\int_0^\frac{t}{2}\text{e}^{-c_2(t-\tau)}
\|F(\tau)\|_{W^{1,6}}^2\mathrm{d}\tau
+C\int_\frac{t}{2}^t\text{e}^{-c_2(t-\tau)}\|F(\tau)\|_{W^{1,6}}^2\mathrm{d}\tau \\
&\le \text{e}^{-c_2t}(\rho(0)-1)^2+C\text{e}^{-\frac{c_2t}{2}}\int_0^\frac{t}{2}\|F(\tau)\|_{W^{1,6}}^2\mathrm{d}\tau
+C\int_\frac{t}{2}^t\|F(\tau)\|_{W^{1,6}}^2\mathrm{d}\tau.
\end{align*}
Letting $t\rightarrow\infty$ and using \eqref{3.3}, we obtain the desired \eqref{3.6}.
\end{proof}

Next, we will prove the following key {\it a priori} estimates on $\nabla \varrho$, which plays a crucial role in our analysis.
\begin{Lemma}\label{lemma3.2}
Under the assumptions of Theorem \ref{thm1.1}, there exists a positive constant $C$ depending on the initial data $(\rho_0,{\bf u}_0)$, $c_1$, and $M$ such that
\begin{align}\label{3.12}
\|\varrho\|_{L^\infty((0,+\infty);W^{1,6})\cap L^2((0,+\infty);W^{1,6})}+\|\nabla{\bf u}\|_{L^2((0,+\infty);L^\infty)}\le C,
\end{align}
and
\begin{align}\label{3.13}
\lim\limits_{t\rightarrow\infty}\|\nabla\varrho(\cdot,t)\|_{L^6}=0.
\end{align}
\end{Lemma}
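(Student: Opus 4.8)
The plan is to derive an equation for $\nabla\varrho$, test it against $|\nabla\varrho|^4\nabla\varrho$, and close a logarithmic Gronwall inequality with the help of Lemma \ref{lemma2.5} to first get the \emph{time-dependent} bound, then upgrade it to the time-independent bound \eqref{3.12} via a contradiction argument analogous to the one used in Lemma \ref{lemma3.1}. Concretely, writing $\varrho=\rho-1$, the mass equation gives
\begin{align}\label{3.14}
\partial_t\nabla\varrho+({\bf u}\cdot\nabla)\nabla\varrho+\nabla{\bf u}\cdot\nabla\varrho+\nabla\varrho\,\divv{\bf u}+\rho\nabla\divv{\bf u}+\nabla\rho\,\divv{\bf u}=0,
\end{align}
and using $\divv{\bf u}=\frac{1}{2\mu+\lambda}(F+\sigma)$ to substitute for $\divv{\bf u}$ one keeps the worst term $\rho\nabla\divv{\bf u}$ controlled by $\|\nabla F\|_{L^6}+\|\nabla\sigma\|_{L^6}\lesssim\|\rho\dot{\bf u}\|_{L^6}+\|\nabla\varrho\|_{L^6}$ (by \eqref{2.2} and \eqref{3.1}). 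Multiplying \eqref{3.14} by $|\nabla\varrho|^4\nabla\varrho$ and integrating by parts in the transport term yields
\begin{align*}
\frac{\mathrm d}{\mathrm dt}\|\nabla\varrho\|_{L^6}^6\le C\|\nabla{\bf u}\|_{L^\infty}\|\nabla\varrho\|_{L^6}^6+C\|\nabla\varrho\|_{L^6}^5\big(\|\rho\dot{\bf u}\|_{L^6}+\|\nabla\varrho\|_{L^6}\big),
\end{align*}
hence $\frac{\mathrm d}{\mathrm dt}\|\nabla\varrho\|_{L^6}\le C(1+\|\nabla{\bf u}\|_{L^\infty})\|\nabla\varrho\|_{L^6}+C\|\rho\dot{\bf u}\|_{L^6}$. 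The term $\|\rho\dot{\bf u}\|_{L^6}$ is bounded using the momentum equation and \eqref{3.3}: one has $\int_0^\infty\|\nabla\dot{\bf u}\|_{L^2}^2\,\mathrm d\tau\le C$, and $\|\rho\dot{\bf u}\|_{L^6}\lesssim\|\sqrt\rho\dot{\bf u}\|_{L^2}+\|\nabla\dot{\bf u}\|_{L^2}$ is in $L^2_t$ by Proposition \ref{pro3.1}.

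The coupling with $\|\nabla{\bf u}\|_{L^\infty}$ is handled by Lemma \ref{lemma2.5} with $q=6$: $\|\nabla{\bf u}\|_{L^\infty}\le C(\|F\|_{L^\infty}+\|{\bf w}\|_{L^\infty}+\|\sigma\|_{L^\infty})\ln(\mathrm e+\|\nabla^2{\bf u}\|_{L^6})+C\|\nabla{\bf u}\|_{L^2}+C$, and $\|\nabla^2{\bf u}\|_{L^6}\lesssim\|\nabla F\|_{L^6}+\|\nabla{\bf w}\|_{L^6}+\|\nabla\sigma\|_{L^6}\lesssim\|\rho\dot{\bf u}\|_{L^6}+\|\nabla\varrho\|_{L^6}$. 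Feeding this back, and setting $f(t)=\mathrm e+\|\nabla\varrho(t)\|_{L^6}$, one obtains a differential inequality of the form $f'\le C g(t)f\ln f+C h(t)f$ with $g,h\in L^1(0,\infty)$ after using \eqref{3.3} (note $\|F\|_{W^{1,6}},\|{\bf w}\|_{W^{1,6}},\|\sigma\|_{L^6}\in L^2_t$, and $\|F\|_{L^\infty}+\|{\bf w}\|_{L^\infty}\lesssim\|F\|_{W^{1,6}}+\|{\bf w}\|_{W^{1,6}}$, so $g=(\|F\|_{W^{1,6}}+\|{\bf w}\|_{W^{1,6}}+\|\sigma\|_{L^6})(\cdots)\in L^1_t$). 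Gronwall's lemma for the quantity $\ln\ln f$ then gives the \emph{finite-time} bound $\sup_{[0,T]}\|\nabla\varrho\|_{L^6}\le C(T)$, together with $\int_0^T\|\nabla{\bf u}\|_{L^\infty}^2\,\mathrm d\tau\le C(T)$ once the log term is controlled.

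To remove the $T$-dependence I would argue by contradiction exactly as in Lemma \ref{lemma3.1}. Since \eqref{3.3} gives $\int_t^\infty(\cdots)\,\mathrm d\tau\le\delta$ for $t\ge T_1$, on $[T_1,\infty)$ the logarithmic Gronwall inequality has arbitrarily small integrated coefficients, so $\sup_{t\ge T_1}\|\nabla\varrho\|_{L^6}$ cannot blow up: if it did, choosing an interval $[T_2,T_3]$ on which $\|\nabla\varrho\|_{L^6}$ doubles and integrating the inequality there produces, for $\delta$ small, a contradiction with the smallness of $\int_{T_2}^{T_3}g$ (the interpolation trick referred to in the strategy: on such an interval $\|\nabla\varrho\|_{L^6}$ is comparable to its endpoint value, and the growth it could accumulate is $o(1)$). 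This simultaneously yields the uniform bound in \eqref{3.12} and, combining with $\int_0^\infty\|\nabla{\bf u}\|_{L^\infty}^2\,\mathrm d\tau<\infty$, the $L^2_t L^\infty_x$ bound on $\nabla{\bf u}$. Finally, for the decay \eqref{3.13}, I revisit the $L^6$ estimate on the interval $[t/2,t]$: the Gronwall factor over $[t/2,t]$ is $\exp(C\int_{t/2}^\infty g)\to1$, while the forcing $\int_{t/2}^t(\|\rho\dot{\bf u}\|_{L^6}+\|\nabla\varrho\|_{L^6}\cdot(\text{small}))\,\mathrm d\tau\to0$ by \eqref{3.3}; combined with \eqref{3.6} which controls the $L^\infty$ (hence, interpolating with the uniform $W^{1,6}$ bound, any intermediate) norm of $\varrho$ at time $t/2$, one concludes $\|\nabla\varrho(t)\|_{L^6}\to0$. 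The $L^2((0,\infty);W^{1,6})$ part of \eqref{3.12} follows because $\int_0^\infty\|\nabla\varrho\|_{L^6}^2\,\mathrm d\tau$ is bounded using $\|\nabla\varrho\|_{L^6}\lesssim\|\nabla^2{\bf u}\|_{L^6}\lesssim\|\rho\dot{\bf u}\|_{L^6}+\|\nabla\varrho\|_{L^6}$ together with the already-established uniform bound on $\|\nabla\varrho\|_{L^6}$ and the time integrability from \eqref{3.3} — more precisely one rearranges the $\nabla\divv{\bf u}$ relation, or simply notes $\|\nabla\sigma\|_{L^6}\in L^2_t$ follows from \eqref{3.3} directly. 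The main obstacle is the bookkeeping in the contradiction step: ensuring that every coefficient multiplying $\|\nabla\varrho\|_{L^6}$ (not just $\|\nabla\varrho\|_{L^6}\ln\|\nabla\varrho\|_{L^6}$) in the post-$T_1$ regime is either in $L^1_t$ with small tail or can be absorbed, so that the doubling interval argument genuinely closes.
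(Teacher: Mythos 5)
Your plan agrees with the paper up to and including the time--dependent bound: writing the $\nabla\varrho$ equation, testing against $|\nabla\varrho|^4\nabla\varrho$, using the BKM-type inequality of Lemma~\ref{lemma2.5} together with the elliptic estimate, and running a $\ln\ln f$ Gronwall argument to obtain $\sup_{[0,T]}\|\nabla\varrho\|_{L^6}\le C(T)$. (Incidentally, your displayed equation for $\nabla\varrho$ lists $\nabla\varrho\,\divv{\bf u}$ and $\nabla\rho\,\divv{\bf u}$ as two separate terms, but these are identical since $\nabla\varrho=\nabla\rho$; the paper's \eqref{3.14} also substitutes $\divv{\bf u}=\tfrac{1}{2\mu+\lambda}(F+\sigma)$ into the $\rho\nabla\divv{\bf u}$ term, which is what produces the good damping term $\tfrac{\gamma}{2\mu+\lambda}\rho^\gamma\nabla\varrho$ that you will need.)

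The genuine gap is in the step where you try to remove the $T$-dependence. You attempt to run a Lemma~\ref{lemma3.1}-style doubling contradiction \emph{while keeping the logarithmic BKM estimate}, and you yourself flag "the main obstacle is the bookkeeping." That obstacle is real and is not resolved by your sketch. First, the coefficient $g$ multiplying $\|\nabla\varrho\|_{L^6}^6\ln(\cdots)$ contains $\|\divv{\bf u}\|_{L^\infty}\lesssim\|F\|_{L^\infty}+\|\sigma\|_{L^\infty}$, and $\|\sigma\|_{L^\infty}$ is not controlled in $L^1_t$ or $L^2_t$ independently of $\|\nabla\varrho\|_{L^6}$ (interpolating $\|\sigma\|_{L^\infty}\lesssim\|\sigma\|_{L^6}^{1/2}\|\nabla\sigma\|_{L^6}^{1/2}$ is circular, and \eqref{3.3} only yields $\|\sigma\|_{L^6}\in L^2_t$). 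Second, even for the parts of $g$ in $L^2_t$, the quantity $\int_{T_2}^{T_3}g$ is \emph{not} small when $T_3-T_2$ is large — it grows like $\delta^{1/2}(T_3-T_2)^{1/2}$ — so saying "the smallness of $\int_{T_2}^{T_3}g$" is not correct, and the contradiction must instead be fought against the dissipation $\int\rho^\gamma|\nabla\varrho|^6$, which brings the log factor back into the balance. The paper's resolution of precisely this difficulty is the Littlewood--Paley/Bernstein step \eqref{wz7}--\eqref{3.22}: decomposing $\nabla^2(-\Delta)^{-1}\sigma$ into a low-frequency piece controlled by $\|\varrho\|_{L^p}$ (small for large time by Lemma~\ref{lemma3.1}, $p$ large) and a high-frequency piece controlled by $2^{-N/2}\|\nabla\varrho\|_{L^6}$, and choosing $N,p$ so that the whole thing is $\le C\delta$. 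This replaces the logarithmic bound on $\|\nabla{\bf u}\|_{L^\infty}$ by the \emph{log-free} inequality $\|\nabla{\bf u}\|_{L^\infty}\le C(\|{\bf w}\|_{W^{1,6}}+\|F\|_{W^{1,6}}+\delta)$ on the time interval in question, and it is exactly this "regularity interpolation trick" that the introduction advertises and that your proposal never supplies. Without it, the doubling argument, the decay \eqref{3.13}, and the $L^2_tL^\infty_x$ bound on $\nabla{\bf u}$ all remain open.

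Two further small corrections: the clause "on such an interval $\|\nabla\varrho\|_{L^6}$ is comparable to its endpoint value" is not the interpolation trick referred to in the strategy — it is simply the defining property of a doubling interval. And the concluding assertion that "$\|\nabla\sigma\|_{L^6}\in L^2_t$ follows from \eqref{3.3} directly" is false: \eqref{3.3} contains $\|\sigma\|_{L^6}$, $\|F\|_{W^{1,6}}$, $\|{\bf w}\|_{W^{1,6}}$ in $L^2_t$ but not $\|\nabla\sigma\|_{L^6}$. The paper gets $\int_0^\infty\|\nabla\varrho\|_{L^6}^2\,\mathrm{d}t<\infty$ only \emph{after} establishing the uniform bound, by integrating the log-free inequality \eqref{3.26}.
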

\begin{proof}
By \eqref{1.1}$_1$ and the definition of  $F$ in \eqref{1.4}, it is not difficult to derive that
\begin{align}\label{3.14}
\partial_t\nabla\varrho+\frac{\gamma}{2\mu+\lambda}\rho^\gamma\nabla \varrho=-\frac{\rho^\gamma-1}{2\mu+\lambda}\nabla\varrho-\frac{1}{2\mu+\lambda}\nabla\varrho F-\frac{1}{2\mu+\lambda}\rho \nabla F-\nabla{\bf u}\cdot\nabla\varrho-{\bf u}\cdot\nabla\nabla\varrho.
\end{align}
Multiplying \eqref{3.14} by $6|\nabla\varrho|^4\nabla\varrho$ and integrating the resulting equation on $\mathbb R^3$, one can deduce that
\begin{align}\label{3.15}
&\partial_t\|\nabla\varrho\|_{L^6}^6+\frac{6\gamma}{2\mu+\lambda}\int \rho^\gamma|\nabla\varrho|^6\mathrm{d}{\bf x}\nonumber
\\ &=-\int \frac{6(\rho^\gamma-1)}{2\mu+\lambda}|\nabla\varrho|^6\mathrm{d}{\bf x}-\int\frac{6}{2\mu+\lambda} F|\nabla\varrho|^6\mathrm{d}{\bf x}-\int \frac{6}{2\mu+\lambda}\rho \nabla F|\nabla\varrho|^4\nabla\varrho\mathrm{d}{\bf x}\nonumber
\\&\ \ \ -6\int \nabla{\bf u}\cdot\nabla\varrho|\nabla\varrho|^4\nabla\varrho\mathrm{d}{\bf x}-\int {\bf u}\cdot\nabla|\nabla\varrho|^6\mathrm{d}{\bf x}\nonumber
\\ &\le C\|\varrho\|_{L^\infty}\|\nabla\varrho\|_{L^6}^6+C\|F\|_{L^\infty}\|\nabla\varrho\|_{L^6}^6+C\|\nabla F\|_{L^6}\|\nabla\varrho\|_{L^6}^5\nonumber
\\&\ \ \ +C\|\nabla{\bf u}\|_{L^\infty}\|\nabla\varrho\|_{L^6}^6+C\|\divv{\bf u}\|_{L^\infty}\|\nabla\varrho\|_{L^6}^6\nonumber
\\ &\le C\|\varrho\|_{L^\infty}\|\nabla\varrho\|_{L^6}^6+C\| F\|_{W^{1,6}}\|\nabla\varrho\|_{L^6}^5
+C\|\nabla{\bf u}\|_{L^\infty}\|\nabla\varrho\|_{L^6}^6.
\end{align}
By \eqref{1.1}$_2$, we see that
\begin{equation}\nonumber \mu\Delta{\bf u}+(\mu+\lambda)\nabla\divv{\bf u}=\rho\dot{\bf u}+\nabla P(\rho).
\end{equation}
Thus, for any $1<p<\infty$, it follows from the standard $L^p$-estimate for the elliptic system that
\begin{align}\label{3.16}
\|\nabla^2{\bf u}\|_{L^p}\le C(\|\rho\dot{\bf u}\|_{L^p}+\|\nabla P\|_{L^p}),
\end{align}
which together with \eqref{2.9} yields that
\begin{align}\label{3.17}
\|\nabla{\bf u}\|_{L^\infty}&\le C\big(\|\divv{\bf u}\|_{L^\infty}+\|{\bf w}\|_{L^\infty}\big)\ln (\text{e}+\|\nabla^2{\bf u}\|_{L^6})+C\|\nabla{\bf u}\|_{L^2}+C\nonumber
\\&\le C\big(\|\divv{\bf u}\|_{L^\infty}+\|{\bf w}\|_{L^\infty}\big)\ln (\text{e}+\|\dot{\bf u}\|_{L^6}+\|\nabla\varrho\|_{L^6})+C\|\nabla{\bf u}\|_{L^2}+C\nonumber
\\&\le C\big(\|\divv{\bf u}\|_{L^\infty}+\|{\bf w}\|_{L^\infty}\big)\ln (\text{e}+\|\nabla\dot{\bf u}\|_{L^2})\nonumber
\\&\ \ \ + C\big(\|\divv{\bf u}\|_{L^\infty}+\|{\bf w}\|_{L^\infty}\big)\ln (\text{e}+\|\nabla\varrho\|_{L^6})+C\|\nabla{\bf u}\|_{L^2}+C.
\end{align}
Setting
\begin{align*}
f(t)\triangleq \text{e}+\|\nabla\varrho\|_{L^6}^6,
\ g(t)\triangleq 1+\big(\|\divv{\bf u}\|_{L^\infty}+\|{\bf w}\|_{L^\infty}\big)\ln (\text{e}+\|\nabla\dot{\bf u}\|_{L^2})+\|\nabla{\bf u}\|_{L^2}+\|\varrho\|_{L^\infty}+\| F\|_{W^{1,6}},
\end{align*}
then one obtains from \eqref{3.15} and \eqref{3.17} that
\begin{align*}
f'(t)\le Cg(t)f(t)+Cg(t)f(t)\ln f(t),
\end{align*}
which leads to
\begin{align}\label{3.18}
(\ln f(t))'\le Cg(t)+Cg(t)\ln f(t).
\end{align}
It follows from \eqref{1.4}, \eqref{1.8}, and \eqref{3.3} that
\begin{align*}
\int_0^Tg(t)\mathrm{d}t\le &C\int_0^T\big(1+\|\divv {\bf u}\|^2_{L^\infty}+\| {\bf w}\|^2_{L^\infty}+\|\nabla\dot{\bf u}\|_{L^2}^2+\|\nabla{\bf u}\|_{L^2}+\|\varrho\|_{L^\infty}+\| F\|_{W^{1,6}}\big)\mathrm{d}t\nonumber
\\ \le &C(T)+C\int_0^T\big(\|F\|_{L^\infty}^2+\|{\bf w}\|_{L^\infty}^2+\|\sigma\|_{L^\infty}^2\big)\mathrm{d}t\nonumber
\\ \le &C(T)+C\int_0^T\big(\|F\|_{W^{1,6}}^2+\|{\bf w}\|_{W^{1,6}}^2\big)\mathrm{d}t\nonumber
\\ \le &C(T),
\end{align*}
which together with \eqref{3.18} and Gronwall inequality shows that
\begin{align*}
\sup\limits_{0\le t\le T}f(t)\le C(T).
\end{align*}
Consequently, one has that
\begin{align}\label{3.19}
\sup\limits_{0\le t\le T}\|\nabla\rho\|_{L^6}\le C(T).
\end{align}

Combining \eqref{3.3} and \eqref{3.6}, we arrive at
\begin{align*}
\lim\limits_{t\rightarrow\infty}\big(\|\rho-1\|_{L^p}+\|\rho-1\|_{L^\infty}\big)=0,\  \ \text{for any}\ p>2.
\end{align*}
Therefore, for any $\delta>0$, there exists a constant $T_4>T_1$ such that, for any $t\ge T_4$ and $p>2$,
\begin{align}\label{3.20}
\|(\rho-1)(t)\|_{L^p}+\|(\rho-1)(t)\|_{L^\infty}\le \delta^2.
\end{align}
Now we claim that there exists a positive constant $C$ independent of $t$ such that, for any $t\ge 0$,
\begin{align}\label{3.21}
\|\nabla \varrho(t)\|_{L^6}\le C.
\end{align}
Indeed, if \eqref{3.21} is false, there exists a time $T_5>T_4$ such that  $\|\nabla \varrho(T_5)\|_{L^6}=\frac{1}{\delta^\kappa}$ for a positive constant $\kappa$. We choose a minimal value of $T_7>T_5$ such
that $\|\nabla \varrho(T_7)\|_{L^6}=\frac{2}{\delta^\kappa}$, then
choose a maximal value of $T_6<T_7$ such that $\|\nabla \varrho(T_6)\|_{L^6}=\frac{1}{\delta^\kappa}$.
Thus, one has that
\begin{align}\label{wz3}
\|\nabla \varrho(t)\|_{L^6}\in\Big[\frac{1}{\delta^\kappa},\frac{2}{\delta^\kappa}\Big]\ \ \text{for}\ \ t\in[T_6,T_7].
\end{align}
Therefore, for any $t\in[T_6,T_7]$, one obtains from Bernstein inequality and the interpolation inequality that
\begin{align}\label{wz7}
\|\nabla^2(-\Delta)^{-1}\sigma\|_{L^\infty}\le &\Big{\|}\nabla^2(-\Delta)^{-1}\sum\limits_{j\le N}\Delta_j\sigma\Big{\|}_{L^\infty}+
 \Big{\|}\nabla^2(-\Delta)^{-1}\sum\limits_{j\ge N}\Delta_j\sigma\Big{\|}_{L^\infty}\nonumber
 \\ \le &C2^\frac{3N}{p}\|\nabla^2(-\Delta)^{-1}\sigma\|_{L^p}+C\sum\limits_{j\ge N}2^{-\frac{j}{2}}\|\nabla\sigma\|_{L^6}\nonumber
 \\ \le &C\big(2^\frac{3N}{p}\|\varrho\|_{L^p}+2^{-\frac{N}{2}}\|\nabla\varrho\|_{L^6}\big)
 \nonumber
  \\ \le &C\delta,
\end{align}
provided that $N\ge \log_2\frac{1}{\delta^{2(\kappa+1)}}$ and $p=3N$. This implies that
\begin{align}\label{3.22}
\|\nabla{\bf u}\|_{L^\infty}\le
& C\big(\|\nabla\mathcal P {\bf u}\|_{L^\infty}+\|\nabla\mathcal Q {\bf u}\|_{L^\infty}\big)\nonumber
\\ \le &C\big(\|\nabla\mathcal P {\bf u}\|_{W^{1,6}}+\|\nabla^2(-\Delta)^{-1}F\|_{L^\infty}
+\|\nabla^2(-\Delta)^{-1}\sigma\|_{L^\infty}\big)\nonumber
\\ \le &C\big(\| {\bf w}\|_{W^{1,6}}+\|F\|_{W^{1,6}}+\delta\big),
\end{align}
where we have used
\begin{align*}
\mathcal Q {\bf u}=-\frac{\nabla(-\Delta)^{-1} (F+\sigma)}{2\mu+\lambda}.
\end{align*}
Inserting \eqref{3.22} into \eqref{3.15}, we see that
\begin{align*}
&\partial_t\|\nabla\varrho\|_{L^6}^6+\frac{6\gamma}{2\mu+\lambda}\int \rho^\gamma|\nabla\varrho|^6\mathrm{d}{\bf x}\nonumber
\\ &\le C\delta\|\nabla\varrho\|_{L^6}^6+C\| F\|_{W^{1,6}}\|\nabla\varrho\|_{L^6}^5
+C\big(\| {\bf w}\|_{W^{1,6}}+\|F\|_{W^{1,6}}\big)\|\nabla\varrho\|_{L^6}^6.
\end{align*}
Thus, if $\delta$ is suitably small, we arrive at
\begin{align*}
\partial_t\|\nabla\varrho\|_{L^6}^6+\frac{3\gamma}{2\mu+\lambda}\int \rho^\gamma|\nabla\varrho|^6\mathrm{d}{\bf x}
\le C\| F\|_{W^{1,6}}\|\nabla\varrho\|_{L^6}^5
+C\big(\| {\bf w}\|_{W^{1,6}}+\|F\|_{W^{1,6}}\big)\|\nabla\varrho\|_{L^6}^6.
\end{align*}
Integrating the above inequality with respect to $t$ from $T_6$ to $T_7$ and using \eqref{3.7}, one finds that
\begin{align*}
&\frac{1}{\delta^{6\kappa}}+\frac{3\gamma}{(2\mu+\lambda)2^\gamma}\frac{1}{\delta^{6\kappa}}(T_7-T_6)\nonumber
\\&\le C\left(\int_{T_6}^{T_7}(\| {\bf w}\|_{W^{1,6}}^2+\|F\|_{W^{1,6}}^2)\mathrm{d}t\right)^\frac{1}{2}(T_7-T_6)^\frac12
\sup\limits_{T_6\le t\le T_7}\big(\|\nabla\varrho\|_{L^6}^5+\|\nabla\varrho\|_{L^6}^6\big)\nonumber
\\&\le C\frac{\delta^\frac{1}{2}}{\delta^{6\kappa}}(T_7-T_6)^\frac12,
\end{align*}
which is impossible when $\delta$ is small enough and \eqref{3.21} follows
immediately.

For any $t\ge T_4$, substituting \eqref{3.20} and \eqref{3.21} into \eqref{wz7} gives that
\begin{align*}
\|\nabla^2(-\Delta)^{-1}\sigma\|_{L^\infty} \le C\left(2^\frac{3N}{p}\|\varrho\|_{L^p}+2^{-\frac{N}{2}}\|\nabla\varrho\|_{L^6}\right)
\le C_12^\frac{3N}{p}\delta^2+C_22^{-\frac{N}{2}}
\le \delta,
\end{align*}
provided that $\delta<\frac{1}{2C_1}$, $N\ge 2\log_2\frac{C_2}{\delta}$, and $p=3N$. Plugging the above estimate into \eqref{3.22} yields that
\begin{align}\label{3.23}
\|\nabla{\bf u}\|_{L^\infty}
\le C\big(\| {\bf w}\|_{W^{1,6}}+\|F\|_{W^{1,6}}+\delta\big).
\end{align}
Inserting \eqref{3.23} into \eqref{3.15}, we derive that
\begin{align*}
&\partial_t\|\nabla\varrho\|_{L^6}^6+\frac{6\gamma}{2\mu+\lambda}\int \rho^\gamma|\nabla\varrho|^6\mathrm{d}{\bf x}\nonumber
\\ &\le C\delta\|\nabla\varrho\|_{L^6}^6+C\| F\|_{W^{1,6}}\|\nabla\varrho\|_{L^6}^5
+C\|{\bf w}\|_{W^{1,6}}\|\nabla\varrho\|_{L^6}^6+C\| F\|_{W^{1,6}}\|\nabla\varrho\|_{L^6}^6,
\end{align*}
which together with \eqref{3.21} implies that there exists a positive constant $c_3$ such that
\begin{align}
\partial_t\|\nabla\varrho\|_{L^6}+c_3\|\nabla\varrho\|_{L^6}
&\le C\| F\|_{W^{1,6}}
+C\|{\bf w}\|_{W^{1,6}}\|\nabla\varrho\|_{L^6}+C\| F\|_{W^{1,6}}\|\nabla\varrho\|_{L^6}\nonumber
\\ &\le C\| F\|_{W^{1,6}}
+C\|{\bf w}\|_{W^{1,6}}.\label{3.25}
\end{align}
By Gronwall inequality, we have
\begin{align*}
\|\nabla\varrho(t)\|_{L^6}\le & \text{e}^{-c_3(t-T_4)}\|\nabla\varrho(T_4)\|_{L^6}
+C\int_{T_4}^t\text{e}^{-c_3(t-\tau)}
\big(\| F\|_{W^{1,6}}+\|{\bf w}\|_{W^{1,6}}\big)\mathrm{d}\tau\nonumber
\\ \le &\text{e}^{-c_3(t-T_4)}\|\nabla\varrho(T_4)\|_{L^6}
+C\int_{T_4}^{\frac{t+T_4}{2}}\text{e}^{-c_3(t-\tau)}
\big(\| F\|_{W^{1,6}}+\|{\bf w}\|_{W^{1,6}}\big)\mathrm{d}\tau\nonumber
\\&+C\int^t_{\frac{t+T_4}{2}}\text{e}^{-c_3(t-\tau)}(\| F\|_{W^{1,6}}
+\|{\bf w}\|_{W^{1,6}})\mathrm{d}\tau\nonumber
\\ \le &\text{e}^{-c_3(t-T_4)}\|\nabla\varrho(T_4)\|_{L^6}
+C\frac{t-T_4}{2}\text{e}^{-\frac{c_3(t+T_4)}{2}}\left(\int_{T_4}^{\frac{t+T_4}{2}}
\big(\| F\|_{W^{1,6}}^2+\|{\bf w}\|_{W^{1,6}}^2\big)\mathrm{d}\tau\right)^\frac{1}{2}\nonumber
\\&+C\left(\int^t_{\frac{t+T_4}{2}}\text{e}^{-2c_3(t-\tau)}
\mathrm{d}\tau\right)^\frac{1}{2}\left(\int^t_{\frac{t+T_4}{2}}
\big(\|F\|_{W^{1,6}}^2+\|{\bf w}\|^2_{W^{1,6}}\big)
\mathrm{d}\tau\right)^\frac{1}{2}\nonumber
\\ \le &\text{e}^{-c_3(t-T_4)}\|\nabla\varrho(T_4)\|_{L^6}+C\text{e}^{-\frac{c_3(t+T_4)}{4}}
\delta^\frac{1}{2}
+C\left(\int^t_{\frac{t+T_4}{2}}\big(\| F\|_{W^{1,6}}^2
+\|{\bf w}\|^2_{W^{1,6}}\big)\mathrm{d}\tau\right)^\frac{1}{2},
\end{align*}
which leads to \eqref{3.13}.

Finally, multiplying \eqref{3.25} by $2\|\nabla\varrho\|_{L^6}$ and using Cauchy--Schwarz inequality, we see that
\begin{align}\label{3.26}
\partial_t&\|\nabla\varrho\|_{L^6}^2+2c_3\|\nabla\varrho\|_{L^6}^2
\le C\| F\|_{W^{1,6}}^2
+C\|{\bf w}\|_{W^{1,6}}^2.
\end{align}
Integrating \eqref{3.26} with respect to $t$ from $0$ to $\infty$ and using \eqref{3.3}, one has that
\begin{align}\int_0^\infty\|\nabla\varrho\|_{L^6}^2\mathrm{d}t\le C\int_0^\infty\left(\|F\|_{W^{1,6}}^2+\|{\bf w}\|_{W^{1,6}}^2\right)\mathrm{d}t\le C,\nonumber
\end{align}
and
\begin{align}\nonumber\int_0^\infty\|\nabla{\bf u}\|_{L^\infty}^2\mathrm{d}t\le C\int_0^\infty\|\nabla{\bf u}\|_{W^{1,6}}^2\mathrm{d}t
 \le C\int_0^\infty\left(\|F\|_{W^{1,6}}^2+\|\sigma\|_{W^{1,6}}^2+\|{\bf w}\|_{W^{1,6}}^2\right)\mathrm{d}t
 \le C,\label{3.28}
\end{align}
these together with \eqref{3.21} imply \eqref{3.12}.
\end{proof}

Next, we will prove the following uniform-in-time bounds on the solution.
\begin{Lemma}\label{lemma3.3}
Under the assumptions of Theorem \ref{thm1.1}, it holds that
\begin{equation}\sup\limits_{0\le t<\infty}\left(\|\varrho(t)\|_{H^3}^2+\|{\bf u}(t)\|_{H^3}^2\right)+\int_0^\infty\left(\|\nabla\varrho(t)\|_{H^2}^2+\|\nabla{\bf u}(t)\|_{H^3}^2\right)\mathrm{d}t\le C.\label{3.29}
\end{equation}
\end{Lemma}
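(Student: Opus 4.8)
The plan is to propagate the higher-order Sobolev regularity by a bootstrap built on the $L^\infty((0,\infty);W^{1,6})$ bound for $\varrho$ from Lemma \ref{lemma3.2} and the dissipation inventory in Proposition \ref{pro3.1}. First I would estimate $\|\nabla^2\varrho\|_{L^2}$ and $\|\nabla^3\varrho\|_{L^2}$. Differentiating the transport equation \eqref{1.1}$_1$ twice and thrice in space, multiplying by $\nabla^2\varrho$ and $\nabla^3\varrho$ respectively and integrating, one gets a Gronwall-type inequality of the schematic form
\begin{align*}
\frac{\mathrm{d}}{\mathrm{d}t}\|\nabla^k\varrho\|_{L^2}^2\le C\|\nabla{\bf u}\|_{L^\infty}\|\nabla^k\varrho\|_{L^2}^2+C\|\nabla^k\varrho\|_{L^2}\|\nabla^{k}{\bf u}\|_{L^2}^{?}+\cdots,\quad k=2,3,
\end{align*}
where all the forcing terms are controlled by $\|\nabla{\bf u}\|_{H^3}$, $\|\varrho\|_{W^{1,6}}$, and lower-order norms via Gagliardo--Nirenberg (Lemma \ref{1interpolation}); since $\int_0^\infty\|\nabla{\bf u}\|_{L^\infty}^2\mathrm{d}t\le C$ by \eqref{3.12}, Gronwall gives the time-independent bound provided the velocity estimates below close.

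Second I would establish the velocity estimates $\sup_t\|\nabla{\bf u}\|_{H^2}^2+\int_0^\infty\|\nabla{\bf u}\|_{H^3}^2\mathrm{d}t\le C$. This is done by the standard energy method for $\dot{\bf u}$: testing the momentum equation (and its material-derivative-differentiated form) to control $\|\sqrt\rho\dot{\bf u}\|_{L^2}$, $\|\nabla\dot{\bf u}\|_{L^2}$, and then using the elliptic estimate \eqref{3.16}, i.e. $\|\nabla^2{\bf u}\|_{L^p}\le C(\|\rho\dot{\bf u}\|_{L^p}+\|\nabla P\|_{L^p})$ together with $\|\nabla P\|_{L^6}\le C\|\varrho\|_{W^{1,6}}$ (from \eqref{3.12} and the uniform positive bounds $c_1\le\rho\le M$), to convert bounds on $\dot{\bf u}$ into bounds on $\nabla^2{\bf u}$. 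Iterating, $\|\nabla^3{\bf u}\|_{L^2}$ is recovered from $\|\nabla\dot{\bf u}\|_{L^2}$, $\|\nabla^2 P\|_{L^2}$, and the already-controlled lower norms, and $\int_0^\infty\|\nabla\dot{\bf u}\|_{L^2}^2\mathrm{d}t\le C$ comes directly from \eqref{3.3}. The time-weighted estimates available from the local theory (Lemma 2.1) handle any short-time integrability near $t=0$.

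Third, I would also need $\sup_t\|{\bf u}\|_{L^2}^2\le C$ (so that the $H^3$ norm of ${\bf u}$, not just its gradient, is controlled): this follows from $\|\sqrt\rho{\bf u}\|_{L^2}^2\le C$ in \eqref{3.3} and the uniform lower bound $\rho\ge c_1$ from Lemma \ref{lemma3.1}. Likewise $\sup_t\|\varrho\|_{L^2}^2\le C$ is already contained in \eqref{3.3}/\eqref{3.4}. Putting the three families of estimates together and absorbing cross-terms (which is where one must be careful to keep constants independent of $T$, using $\int_0^\infty$-integrability of the "bad" coefficients rather than $L^1_{\mathrm{loc}}$ bounds) yields \eqref{3.29}.

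The main obstacle I expect is the $\nabla^3$ level: the top-order density estimate forces terms like $\int\nabla^3\varrho\cdot\nabla^3{\bf u}\cdot\nabla\varrho$ and $\int\nabla^3\varrho\cdot\nabla^2{\bf u}\cdot\nabla^2\varrho$, where one cannot afford to put $\nabla^3{\bf u}$ in $L^2$ and $\nabla^3\varrho$ in $L^2$ simultaneously without a small or integrable factor. The resolution is to pair $\nabla^3{\bf u}$ against the time-integrable dissipation $\int_0^\infty\|\nabla{\bf u}\|_{H^3}^2\,\mathrm{d}t$ (using Young's inequality so that the $\|\nabla^3{\bf u}\|_{L^2}^2$ part is absorbed by the velocity estimate and only an $L^1_t$ coefficient multiplies $\|\nabla^3\varrho\|_{L^2}^2$), and to use $\|\nabla^2\varrho\|_{L^3}\le C\|\nabla^2\varrho\|_{L^2}^{1/2}\|\nabla^3\varrho\|_{L^2}^{1/2}$-type interpolation for the quadratic-in-$\nabla^2\varrho$ terms, again splitting off a Gronwall-admissible factor. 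Once the coefficient in front of $\|\nabla^3\varrho\|_{L^2}^2$ is shown to lie in $L^1(0,\infty)$ — which is exactly what \eqref{3.12} and \eqref{3.3} provide — Gronwall closes the loop uniformly in time.
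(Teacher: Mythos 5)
Your plan for $\nabla\varrho$ and $\nabla^2\varrho$ is close in spirit to the paper's: it too uses the Hoff reformulation \eqref{3.14} and a Gronwall argument, with the damping term $\rho^\gamma|\nabla^k\varrho|^2$ on the left and the smallness of $\|\varrho\|_{L^\infty}+\|\nabla\varrho\|_{L^6}$ for large times (from \eqref{3.6} and \eqref{3.13}) to keep that damping effective, together with the $L^1_t$-integrability of the coefficients from \eqref{3.3} and \eqref{3.12}. For $\sup_t\|\nabla^2 u\|_{L^2}$ and $\int_0^\infty\|\nabla^3 u\|_{L^2}^2$, converting bounds on $\sqrt{\rho}\dot{\bf u}$ and $\nabla\dot{\bf u}$ into velocity bounds via the elliptic estimate \eqref{3.16} is also essentially what the paper does (see \eqref{3.36}, \eqref{3.45}). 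So the lower and intermediate orders are fine.

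The gap is at the top order, and your proposed resolution does not close it. If you differentiate the transport equation (in either the raw form or via $F$) and test against $\nabla^3\varrho$, the principal linear term is either $\int\nabla^3\divv{\bf u}\,\nabla^3\varrho\,\mathrm{d}{\bf x}$ or $\int\rho\,\nabla^3 F\,\nabla^3\varrho\,\mathrm{d}{\bf x}$. The first needs control of $\|\nabla^4{\bf u}\|_{L^2}$, and the second (via \eqref{1.5}) needs $\|\nabla^2(\rho\dot{\bf u})\|_{L^2}$ and hence $\|\nabla^2\dot{\bf u}\|_{L^2}$; neither is in the inventory you are assuming (Proposition \ref{pro3.1} gives only $\int\|\nabla\dot{\bf u}\|_{L^2}^2\,\mathrm{d}t\le C$, and the $\dot{\bf u}$-energy method you invoke yields $\|\nabla\dot{\bf u}\|_{L^2}$, not $\|\nabla^2\dot{\bf u}\|_{L^2}$). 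Your suggested Young splitting leaves a remnant $\varepsilon\|\nabla^3\varrho\|_{L^2}^2$ with a \emph{constant}, not time-integrable, coefficient, because the raw transport equation has no damping at the $\nabla^3\varrho$ level; Gronwall then gives exponential growth rather than a uniform bound. The paper avoids this by switching strategy for the top order: it rewrites \eqref{1.1} in the linearized symmetrizable form \eqref{3.46}, applies $\nabla^3$ to \emph{both} equations, tests against $\gamma\nabla^3\varrho$ and $\nabla^3{\bf u}$, and adds, so that the dangerous cross terms $\gamma\int\nabla^3\divv{\bf u}\,\nabla^3\varrho$ and $\gamma\int\nabla^4\varrho\cdot\nabla^3{\bf u}$ cancel after integration by parts. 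This coupled energy identity \eqref{3.47} produces the dissipation $\mu\|\nabla^4{\bf u}\|_{L^2}^2$ automatically, and a separate cross-estimate \eqref{3.48} (testing $\nabla^2\eqref{3.46}_2$ against $\nabla^3\varrho$) supplies the missing dissipation $\|\nabla^3\varrho\|_{L^2}^2$; a Lyapunov combination $D\times\eqref{3.47}+\eqref{3.48}+\eqref{3.41}$ then closes. Without this symmetric cancellation and the accompanying cross-estimate, the decoupled density/velocity bootstrap you describe cannot close.
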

\begin{proof}
Multiplying \eqref{3.14} by $2\nabla\varrho$ and integrating the resulting equation over $\mathbb R^3$, one derives that
\begin{align*}
&\partial_t\|\nabla\varrho\|_{L^2}^2+\frac{2\gamma}{2\mu+\lambda}\int \rho^\gamma|\nabla\varrho|^2\mathrm{d}{\bf x}\nonumber
\\ &=-\int \frac{2(\rho^\gamma-1)}{2\mu+\lambda}|\nabla\varrho|^2\mathrm{d}{\bf x}-\int\frac{2}{2\mu+\lambda} F|\nabla\varrho|^2\mathrm{d}{\bf x}-\int \frac{2}{2\mu+\lambda}\rho \nabla F\cdot\nabla\varrho\mathrm{d}{\bf x}\nonumber
\\&\ \ \ -2\int \nabla{\bf u}\cdot\nabla\varrho\cdot\nabla\varrho\mathrm{d}{\bf x}
-\int {\bf u}\cdot\nabla|\nabla\varrho|^2\mathrm{d}{\bf x}\nonumber
\\ &\le C\|\varrho\|_{L^\infty}\|\nabla\varrho\|_{L^2}^2
+C\|F\|_{L^\infty}\|\nabla\varrho\|_{L^2}^2+C\|\nabla F\|_{L^2}\|\nabla\varrho\|_{L^2}\nonumber
\\&\ \ \ +C\|\nabla{\bf u}\|_{L^\infty}\|\nabla\varrho\|_{L^2}^2+C\|\divv{\bf u}\|_{L^\infty}\|\nabla\varrho\|_{L^2}^2\nonumber
\\ &\le C\|\varrho\|_{L^\infty}\|\nabla\varrho\|_{L^2}^2+C\| F\|_{W^{1,6}}\|\nabla\varrho\|_{L^2}^2+C\|\nabla F\|_{L^2}\|\nabla\varrho\|_{L^2}
+C\|\nabla{\bf u}\|_{L^\infty}\|\nabla\varrho\|_{L^2}^2,
\end{align*}
which combined with \eqref{3.5} and Cauchy--Schwarz inequality implies that
\begin{align}\label{3.30}
&\partial_t\|\nabla\varrho\|_{L^2}^2+\frac{\gamma}{2\mu+\lambda}\int \rho^\gamma|\nabla\varrho|^2\mathrm{d}{\bf x}\nonumber
\\ &\le C\|\varrho\|_{L^\infty}\|\nabla\varrho\|_{L^2}^2+C\| F\|_{W^{1,6}}^2\|\nabla\varrho\|_{L^2}^2+C\|\nabla F\|_{L^2}^2
+C\|\nabla{\bf u}\|_{L^\infty}^2\|\nabla\varrho\|_{L^2}^2.
\end{align}
This along with Gronwall inequality, \eqref{3.3}, and \eqref{3.12} leads to
\begin{align}\label{3.31}
\|\nabla\varrho(T)\|_{L^2}\le C(T).
\end{align}

For any $\delta>0$, it infers from \eqref{3.6} and \eqref{3.13} that there exists a constant $T_8>T_1$ such that
\begin{align}\label{3.32}
\|\varrho(\cdot,t)\|_{L^\infty}+\|\nabla\varrho(\cdot,t)
\|_{L^6}\le \delta,\ \  \text{for any}\ t\ge T_8.
\end{align}
Therefore, by \eqref{3.30}, it holds that, for any $t\ge T_8$,
\begin{align}\label{3.33}
\partial_t\|\nabla\varrho\|_{L^2}^2+\frac{\gamma}{2(2\mu+\lambda)}\int\rho^\gamma |\nabla\varrho|^2\mathrm{d}{\bf x}
\le C\| F\|_{W^{1,6}}^2\|\nabla\varrho\|_{L^2}^2+C\|\nabla F\|_{L^2}^2
+C\|\nabla{\bf u}\|_{L^\infty}^2\|\nabla\varrho\|_{L^2}^2,
\end{align}
which together with Gronwall inequality, \eqref{3.3}, \eqref{3.12}, and \eqref{3.31} yields that
\begin{align*}
\|\nabla\varrho(t)\|_{L^2}\le C,\ \  \text{for any}\ t\ge 0.
\end{align*}
This combined with \eqref{3.3}, \eqref{3.5} and \eqref{3.33} implies that
\begin{align}\sup\limits_{0\le t<\infty}\|\nabla\varrho(t)\|_{L^2}^2+\int_0^\infty\|\nabla\varrho(t)\|_{L^2}^2\mathrm{d}t\le C,
\label{3.34}
\end{align}
\begin{equation}
\int_0^\infty\|\nabla^2{\bf u}\|_{L^2}^2\mathrm{d}t\le C\int_0^\infty\left(\|\nabla F\|_{L^2}^2+\|\nabla {\bf w}\|_{L^2}^2+\|\nabla \varrho\|_{L^2}^2\right)\mathrm{d}t
\le C,\label{3.35}
\end{equation}
and
\begin{align}\label{3.36}
\sup\limits_{0\le t<\infty}\|\nabla^2{\bf u}(t)\|_{L^2}^2\le C\sup\limits_{0\le t<\infty}\left(\|\sqrt{\rho}\dot{\bf u} \|_{L^2}^2+\|\nabla \varrho\|_{L^2}^2\right)
\le C.
\end{align}

Applying the operator $\nabla$ to \eqref{3.14}, one gets that
\begin{align}\label{3.37}
\partial_t\nabla^2\varrho+\frac{\gamma}{2\mu+\lambda}\rho^\gamma\nabla^2 \varrho =&-\frac{2\gamma^2}{2\mu+\lambda}\rho^{\gamma-1}\nabla \varrho\nabla \varrho-\frac{\rho^\gamma-1}{2\mu+\lambda}\nabla^2\varrho-\frac{1}{2\mu+\lambda}\nabla^2\varrho F-\frac{2}{2\mu+\lambda}\nabla\varrho \nabla F
\notag \\ &-\frac{1}{2\mu+\lambda}\rho \nabla^2 F-\nabla^2{\bf u}\cdot\nabla\varrho-2\nabla{\bf u}\cdot\nabla\nabla\varrho-{\bf u}\cdot\nabla\nabla^2\varrho.
\end{align}
Multiplying \eqref{3.37} by $2\nabla^2\varrho$ and integrating the resulting equation over $\mathbb R^3$, we then infer from \eqref{3.5} and Cauchy--Schwarz inequality that
\begin{align}\label{3.38}
&\partial_t\|\nabla^2\varrho\|_{L^2}^2+\frac{\gamma}{2\mu+\lambda}\int \rho^\gamma|\nabla^2\varrho|^2\mathrm{d}{\bf x}\nonumber
\\& \le C\|\varrho\|_{L^\infty}\|\nabla^2\varrho\|_{L^2}^2+C\|\nabla\varrho\|_{L^3}^2\|\nabla\varrho\|_{L^6}^2
+C\|F\|_{L^\infty}^2\|\nabla^2\varrho\|_{L^2}^2
+C\|\nabla F\|_{L^6}^2\|\nabla\varrho\|_{L^3}^2
\notag\\ &\quad +C\|\nabla^2F\|_{L^2}^2+C\|\nabla^2 {\bf u}\|_{L^6}^2\|\nabla\varrho\|_{L^3}^2+C\|\nabla {\bf u}\|_{L^\infty}^2\|\nabla^2\varrho\|_{L^2}^2.
\end{align}
Noting that
\begin{align}\label{3.39}
&\|\nabla^2 {\bf u}\|_{L^6}^2
\le C\big(\|\rho\dot {\bf u}\|_{L^6}^2+\|\nabla\sigma\|_{L^6}^2\big)
\le C\big(\|\nabla\dot {\bf u}\|_{L^2}^2+\|\nabla\varrho\|_{L^6}^2\big),\\ \label{3.40}
&\|\nabla^2 F\|_{L^2}^2\le C\|\nabla(\rho\dot {\bf u})\|_{L^2}^2\le C\|\nabla\dot {\bf u}\|_{L^2}^2+C\|\nabla\varrho\|_{L^3}^2\|\dot {\bf u}\|_{L^6}^2\le C\|\nabla\dot {\bf u}\|_{L^2}^2,
\end{align}
one thus derives from \eqref{3.38} that
\begin{align}\label{3.41}
\partial_t\|&\nabla^2\varrho\|_{L^2}^2+\frac{\gamma}{2\mu+\lambda}\int \rho^\gamma|\nabla^2\varrho|^2\mathrm{d}{\bf x}\nonumber
\\ \le &C\|\varrho\|_{L^\infty}\|\nabla^2\varrho\|_{L^2}^2+C\|\nabla\varrho\|_{L^3}^2\|\nabla\varrho\|_{L^6}^2
+C\|F\|_{W^{1,6}}^2\|\nabla^2\varrho\|_{L^2}^2
+C\|\nabla F\|_{L^6}^2\|\nabla\varrho\|_{L^3}^2
\notag\\ &+C\|\nabla\dot{\bf u}\|_{L^2}^2+C\|\nabla\dot {\bf u}\|_{L^2}^2\|\nabla\varrho\|_{L^3}^2+C\|\nabla {\bf u}\|_{L^\infty}^2\|\nabla^2\varrho\|_{L^2}^2.
\end{align}
This together with Gronwall inequality, \eqref{3.3}, \eqref{3.5}, \eqref{3.12}, and \eqref{3.34} shows that
\begin{align}\label{3.42}
\|\nabla^2\varrho(T)\|_{L^2}\le C(T).
\end{align}

By \eqref{3.12}, \eqref{3.32}, \eqref{3.34}, and \eqref{3.41}, it holds that, for any $t\ge T_8$,
\begin{align}\label{3.43}
\partial_t\|\nabla^2\varrho\|_{L^2}^2+\frac{\gamma}{2(2\mu+\lambda)}\int |\nabla^2\varrho|^2\mathrm{d}{\bf x}
& \le C\|\nabla\varrho\|_{L^3}^2\|\nabla\varrho\|_{L^6}^2
+C\|F\|_{W^{1,6}}^2\|\nabla^2\varrho\|_{L^2}^2
+C\|\nabla F\|_{L^6}^2
\notag\\ & \quad +C\|\nabla^2F\|_{L^2}^2+C\|\nabla\dot {\bf u}\|_{L^2}^2+C\|\nabla {\bf u}\|_{L^\infty}^2\|\nabla^2\varrho\|_{L^2}^2.
\end{align}
This along with Gronwall inequality, \eqref{3.3}, \eqref{3.12}, \eqref{3.34}, and \eqref{3.42} indicates that
\begin{align*}
\|\nabla^2\varrho(t)\|_{L^2}\le C,\ \  \text{for any}\ t\ge 0,
\end{align*}
which combined with \eqref{3.43} implies that
\begin{align}\sup\limits_{0\le t<\infty}\|\nabla^2\varrho(t)\|_{L^2}^2+\int_0^\infty\|\nabla^2\varrho(t)\|_{L^2}^2\mathrm{d}t\le C,
\label{3.44}
\end{align}
and
\begin{align}\int_0^\infty\|\nabla^3{\bf u}\|_{L^2}^2\mathrm{d}t\le &C\int_0^\infty\left(\|\nabla (\rho\dot{\bf u})\|_{L^2}^2+\|\nabla^2 \sigma\|_{L^2}^2\right)\mathrm{d}t\notag
\\ \le &C\int_0^\infty\left(\|\nabla \dot{\bf u}\|_{L^2}^2+\| \nabla\varrho\|_{L^3}^2\| \dot{\bf u}\|_{L^6}^2+\|\nabla^2 \varrho\|_{L^2}^2+\|\nabla \varrho\|_{L^4}^4\right)\mathrm{d}t
\notag
\\ \le &C.\label{3.45}
\end{align}

The system \eqref{1.1} can be equivalently reformulated in the linearized
form as follows
\begin{align}\label{3.46}
\begin{cases}
\varrho_{t}+\divv\mathbf{u}=-\varrho\divv\mathbf{u}-\mathbf{u}\cdot\nabla\varrho,\\
\mathbf{u}_{t}
-\mu\Delta\mathbf{u}-(\mu+\lambda)\nabla\divv\mathbf{u}+\gamma\nabla \varrho=-\mathbf{u}\cdot\nabla\mathbf{u}-\frac{\mu\varrho}{\rho}\Delta\mathbf{u}
-\frac{(\mu+\lambda)\varrho}{\rho}\nabla\divv\mathbf{u}-\gamma(\rho^{\gamma-1}-1)\nabla\varrho.
\end{cases}
\end{align}
Applying $\nabla^3$ to \eqref{3.46}$_1$ and \eqref{3.46}$_2$, then multiplying them by $\gamma\nabla^3\varrho$ and $\nabla^3{\bf u}$, respectively, and integrating the resultant equations over $\mathbb R^3$ and summing up, it is not difficult to deduce that
\begin{align}\label{3.47}
\frac{1}{2}\frac{\mathrm{d}}{\mathrm{d}t}&\left(\gamma\|\nabla^3\varrho\|_{L^2}^2
+\|\nabla^3{\bf u}\|_{L^2}^2\right)
+\mu\|\nabla^4{\bf u}\|_{L^2}^2+(\mu+\lambda)\|\nabla^3\divv{\bf u}\|_{L^2}^2\notag
\\ \le & C\|\nabla{\bf u}\|_{H^2}\|\nabla^3\varrho\|_{L^2}^2+C\|\nabla\varrho\|_{H^1}\|\nabla^4{\bf u}\|_{L^2}\|\nabla^3\varrho\|_{L^2}+C\|\varrho\|_{L^\infty}\|\nabla^4{\bf u}\|_{L^2}\|\nabla^3\varrho\|_{L^2}\notag
\\ & +C\|\nabla{\bf u}\|_{H^2}^2\|\nabla^4{\bf u}\|_{L^2}+C\|\varrho\|_{L^\infty}\|\nabla^4{\bf u}\|_{L^2}^2+C\|\nabla^2\varrho\|_{L^2}\|\nabla^3{\bf u}\|_{L^2}^\frac{1}{2}\|\nabla^4{\bf u}\|_{L^2}^\frac{3}{2}\notag
\\ & +C\|\nabla^3\varrho\|_{L^2}\|\nabla^2{\bf u}\|_{H^1}\|\nabla^4{\bf u}\|_{L^2}+C\|\nabla\varrho\|_{H^1}^2\|\nabla^2{\bf u}\|_{H^1}\|\nabla^4{\bf u}\|_{L^2}\notag
\\ & +C\|\nabla\varrho\|_{H^1}\|\nabla^3\varrho\|_{L^2}^2
+C\|\nabla\varrho\|_{H^1}^3\|\nabla^3\varrho\|_{L^2}.
\end{align}
Applying $\nabla^2$ to \eqref{3.46}$_2$ and multiplying the resulting equations by $\nabla^3\varrho$, and then integrating over $\mathbb R^3$, one has that
\begin{align*}
\gamma\|\nabla^3\varrho\|_{L^2}^2= &-\int \nabla^2{\bf u}_t\nabla^3\varrho\mathrm{d}{\bf x}+\int \nabla^2\left(\frac{\mu\Delta\mathbf{u}+(\mu+\lambda)\divv\mathbf{u}}{\rho}-{\bf u}\cdot\nabla{\bf u}
\right)\nabla^3\varrho\mathrm{d}{\bf x}\notag
\\ &-\gamma\int\nabla^2\left((\rho^{\gamma-1}-1)\varrho\right)\nabla^3\varrho\mathrm{d}{\bf x}\notag
\\ \le &\frac{\mathrm{d}}{\mathrm{d}t}\int \nabla^2\divv{\bf u}\nabla^2\varrho\mathrm{d}{\bf x}+C\|\nabla^3{\bf u}\|_{L^2}^2
+C\|\nabla\varrho\|_{H^1}\|\nabla{\bf u}\|_{H^1}\|\nabla^4{\bf u}\|_{L^2}+C\|\nabla^3\varrho\|_{L^2}\|\nabla^4{\bf u}\|_{L^2}\notag
\\ &+C\|\nabla\varrho\|_{H^1}\|\nabla^3\varrho\|_{L^2}\|\nabla^4{\bf u}\|_{L^2}+C\|\nabla^3\varrho\|_{L^2}\|\nabla{\bf u}\|_{H^2}^2+C\|\nabla^3\varrho\|_{L^2}\|\nabla\varrho\|_{H^1}^2,
\end{align*}
which implies that
\begin{align}\label{3.48}
-\frac{\mathrm{d}}{\mathrm{d}t}\int \nabla^2\divv{\bf u}\nabla^2\varrho\mathrm{d}{\bf x}+\frac{\gamma}{2}\|\nabla^3\varrho\|_{L^2}^2
\le &C\|\nabla^3{\bf u}\|_{L^2}^2
+C\|\nabla\varrho\|_{H^1}^2\|\nabla{\bf u}\|_{H^1}^2\notag
+C\|\nabla^4{\bf u}\|_{L^2}^2\notag
\\ &+C\|\nabla\varrho\|_{H^1}^2\|\nabla^3\varrho\|_{L^2}^2+C\|\nabla{\bf u}\|_{H^2}^4+C\|\nabla\varrho\|_{H^1}^4.
\end{align}
Let $D>0$ be a large fixed constant. Summing up $D\times\eqref{3.47}+\eqref{3.48}+\eqref{3.41}$ and applying Cauchy--Schwarz inequality,  then there exists an energy functional $H(\|\nabla^2\varrho\|_{H^1},\|\nabla^3{\bf u}\|_{L^2})$ which
is equivalent to $\|\nabla^2\varrho\|_{H^1}^2+\|\nabla^3{\bf u}\|_{L^2}^2$ such that
\begin{align}
\frac{\mathrm{d}}{\mathrm{d}t}H(t)+C_1\left(\|\nabla^2\varrho\|_{H^1}^2+\|\nabla^4{\bf u}\|_{L^2}^2\right)
\le &C(\|\nabla{\bf u}\|^2_{H^2}+\|\nabla\varrho\|^2_{H^1})H(t)+C\|\nabla\varrho\|_{H^1}^2+\|\nabla{\bf u}\|_{H^1}^2
\notag\\ &+C\|F\|_{W^{1,6}}^2\|\nabla^2\varrho\|_{L^2}^2+C\|\nabla F\|_{L^6}^2+C\|\nabla\dot{\bf u}\|_{L^2}^2,\label{3.49}
\end{align}
where we have used \eqref{3.3}, \eqref{3.12}, \eqref{3.34}, \eqref{3.36}, and \eqref{3.44}. Applying Gronwall inequality to \eqref{3.49}, we see that
\begin{align*}
H(t)\le C,
\end{align*}
which combined with \eqref{3.49} leads to
\begin{align*}
\sup\limits_{0\le t<\infty}\left(\|\nabla^2\varrho(t)\|_{H^1}^2+\|\nabla^3{\bf u}(t)\|_{L^2}^2\right)+\int_0^\infty\left(\|\nabla^2\varrho(t)\|_{H^1}^2+\|\nabla^4{\bf u}(t)\|_{L^2}^2\right)\mathrm{d}t\le C.
\end{align*}
This along with \eqref{3.3}, \eqref{3.34}, and Sobolev inequality yields \eqref{3.29}.
\end{proof}

The following Lemma \ref{lemma3.4} is necessary for decay estimates on the higher-order
derivatives of $(\varrho,{\bf u})$.
\begin{Lemma}\label{lemma3.4}
Under the assumptions of Theorem \ref{thm1.1},  it holds that
\begin{align}\label{3.51}
\sup\limits_{0\le t<\infty}\Big(\|\varrho(t)\|_{\dot B^{-s}_{2,\infty}}
+\|{\bf u}(t)\|_{\dot B^{-s}_{2,\infty}}\Big)\le C,\ \text{for}\ s\in(0,1/2],
\\ \label{3.52}
\|\varrho(T)\|_{\dot B^{-s}_{2,\infty}}+\|{\bf u}(T)\|_{\dot B^{-s}_{2,\infty}}\le C(T),\ \text{for}\ s\in(1/2,3/2].
\end{align}
\end{Lemma}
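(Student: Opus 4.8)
The plan is to view the lemma as a statement about the propagation in time of the negative Besov norm $\|(\varrho,{\bf u})\|_{\dot B^{-s}_{2,\infty}}$, which is finite at $t=0$ under the hypotheses of Theorem \ref{thm1.1} (where $N_0<\infty$), the two regimes $s\le\tfrac12$ and $\tfrac12<s\le\tfrac32$ differing only by whether the norm stays uniformly bounded or is merely controlled on finite time intervals. I would run a frequency-localized energy estimate for the linearized system \eqref{3.46}: apply $\Delta_q$ to \eqref{3.46}$_1$ and \eqref{3.46}$_2$, pair them in $L^2(\mathbb{R}^3)$ with $\gamma\Delta_q\varrho$ and $\Delta_q{\bf u}$, and add; the linear coupling $\gamma\int\nabla\Delta_q\varrho\cdot\Delta_q{\bf u}+\gamma\int(\divv\Delta_q{\bf u})\Delta_q\varrho$ cancels after integration by parts, leaving
\begin{align*}
\frac12\frac{\mathrm{d}}{\mathrm{d}t}\LC\gamma\|\Delta_q\varrho\|_{L^2}^2+\|\Delta_q{\bf u}\|_{L^2}^2\RC+\mu\|\nabla\Delta_q{\bf u}\|_{L^2}^2+(\mu+\lambda)\|\divv\Delta_q{\bf u}\|_{L^2}^2=R_q(t),
\end{align*}
where $R_q$ collects the pairings of $\Delta_q$ applied to the nonlinearities $-\divv(\varrho{\bf u})$, $-{\bf u}\cdot\nabla{\bf u}$, $-\tfrac{\mu\varrho}{\rho}\Delta{\bf u}-\tfrac{(\mu+\lambda)\varrho}{\rho}\nabla\divv{\bf u}$, $-\gamma(\rho^{\gamma-1}-1)\nabla\varrho$ against $\Delta_q\varrho$ or $\Delta_q{\bf u}$. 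Dropping the viscous terms, integrating in time, multiplying by $2^{-2qs}$ and taking $\sup_{q\in\mathbb{Z}}$ yields $\|(\varrho,{\bf u})(t)\|_{\dot B^{-s}_{2,\infty}}^2\le CN_0^2+C\sup_q 2^{-2qs}\int_0^t|R_q(\tau)|\,\mathrm{d}\tau$.

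The heart of the matter is the estimate of the nonlinear integral. For the terms carrying a full gradient ($\divv(\varrho{\bf u})$, ${\bf u}\cdot\nabla{\bf u}$) I would integrate by parts to move the derivative onto the dyadic blocks of the unknowns; for the second-order velocity terms I would write $\tfrac{\mu\varrho}{\rho}\Delta{\bf u}=\partial_k(\tfrac{\mu\varrho}{\rho}\partial_k{\bf u})-\partial_k(\tfrac{\mu\varrho}{\rho})\partial_k{\bf u}$, integrating the first piece against $\Delta_q{\bf u}$ to recover $\nabla\Delta_q{\bf u}$ (absorbed by the retained dissipation or handled by Bernstein), and use the elliptic identities in \eqref{1.4}--\eqref{1.5} to trade $\nabla^2{\bf u}$ for the better-behaved $\rho\dot{\bf u}$, $\nabla F$, $\nabla{\bf w}$. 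Each resulting pairing $\langle\Delta_q N,\Delta_q g\rangle$ is bounded, via H\"older and $\sup_q(a_qb_q)\le\sup a_q\cdot\sup b_q$, by $\|N\|_{\dot B^{-s}_{r,\infty}}\|g\|_{\dot B^{-s}_{r',\infty}}$ for conjugate exponents $r,r'$, and these Besov norms are controlled by Bony's paraproduct decomposition together with the embeddings of Lemma \ref{lemma2.3}, the uniform $H^3$ bound \eqref{3.29} and the interpolation $\|f\|_{\dot H^\theta}\le\|f\|_{L^2}^{1-\theta}\|\nabla f\|_{L^2}^\theta$. In the range $s\in(0,\tfrac12]$ the regularity one must borrow to land a paraproduct in $\dot B^{-s}_{\cdot,\infty}$ is strictly less than what is available with a norm lying in $L^2(0,\infty)$ (supplied by \eqref{3.3}, \eqref{3.12}, \eqref{3.29}), so the nonlinear integral is bounded \emph{uniformly} in $t$ by a product of $L^2(0,\infty)$ quantities, giving \eqref{3.51}; in this range one may alternatively try to reduce, via \eqref{2.7}, to uniform $L^{p}$ control of $(\varrho,{\bf u})$ with $p=\tfrac{6}{3+2s}\in[\tfrac32,2)$. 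For $s\in(\tfrac12,\tfrac32]$ this regularity budget is exhausted --- the terms stemming from $\tfrac{\varrho}{\rho}\Delta{\bf u}$ and $(\rho^{\gamma-1}-1)\nabla\varrho$ force the use of the merely uniformly bounded (not $L^1_t$) $H^3$ norm --- so one is left with $\frac{\mathrm{d}}{\mathrm{d}t}\|(\varrho,{\bf u})\|_{\dot B^{-s}_{2,\infty}}^2\le C\Psi(t)(\|(\varrho,{\bf u})\|_{\dot B^{-s}_{2,\infty}}^2+1)$, $\Psi\in L^\infty_{\mathrm{loc}}$, and Gronwall's inequality gives only \eqref{3.52}.

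The main obstacle I anticipate is precisely the bookkeeping that pins down the threshold $s=\tfrac12$: for each nonlinearity one must balance the number of derivatives consumed by the negative-Besov product estimate against the number available with an $L^2(0,\infty)$ bound ($\|\nabla{\bf u}\|_{L^2}$, $\|\nabla\varrho\|_{L^2}$, $\|F\|_{W^{1,6}}$, $\|{\bf w}\|_{W^{1,6}}$, $\|\nabla\dot{\bf u}\|_{L^2}$ and the higher-order analogues from Lemma \ref{lemma3.3}), the remaining norms being only uniformly bounded, and then show that the surplus is positive exactly when $s\le\tfrac12$. A secondary difficulty is that $\dot B^{-s}_{2,\infty}$ is governed by the low-frequency blocks, on which neither the parabolic dissipation nor the strong $H^3$ estimates help; there one must rely on the cancellation of the linear coupling and on the forcing being genuinely subcritical, and be careful never to estimate a low-frequency quantity by a high-regularity norm.
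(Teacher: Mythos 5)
Your opening move --- apply $\Delta_q$ to the linearized system \eqref{3.46}, pair with $(\gamma\Delta_q\varrho,\Delta_q{\bf u})$, use the cancellation of the linear coupling, then weight by $2^{-2sq}$ and take $\sup_q$ --- is exactly the paper's frequency-localized energy estimate \eqref{3.53}, and your intuition that the threshold $s=\tfrac12$ is where the $L^2(0,\infty)$-in-time dissipation budget runs out is the right one. Where your proposal diverges, and where it is not actually carried through, is in the treatment of the nonlinear source $R_q$. You propose a full Bony paraproduct decomposition, plus integration by parts on $\divv(\varrho{\bf u})$ and ${\bf u}\cdot\nabla{\bf u}$, plus writing the viscous quasilinear term as $\partial_k(\tfrac{\mu\varrho}{\rho}\partial_k{\bf u})-\partial_k(\tfrac{\mu\varrho}{\rho})\partial_k{\bf u}$ and using the effective-flux identities \eqref{1.4}--\eqref{1.5}. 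None of this is needed, and much of it you leave as ``I would'': the many paraproduct pairings are gestured at but not estimated, so as written this is a plan, not a proof. The paper's argument is one line shorter and more elementary: after the energy identity, apply the embedding $L^p\hookrightarrow\dot B^{-s}_{2,\infty}$ from Lemma \ref{lemma2.3} (estimate \eqref{2.7}) with $p=\tfrac{6}{3+2s}$ \emph{to the whole nonlinearity}, then a single H\"older inequality $\|fg\|_{L^{6/(3+2s)}}\le\|f\|_{L^{3/s}}\|g\|_{L^2}$ bounds every source term by $\|(\varrho,{\bf u})\|_{L^{3/s}}\|\nabla(\varrho,{\bf u})\|_{H^1}$ (the density lower bound \eqref{3.5} takes care of the $1/\rho$). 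The dichotomy at $s=\tfrac12$ then falls out arithmetically: for $s\le\tfrac12$ one has $\tfrac3s\ge 6$, so Sobolev gives $\|(\varrho,{\bf u})\|_{L^{3/s}}\lesssim\|\nabla(\varrho,{\bf u})\|_{H^1}$ and the Gronwall coefficient is $\|\nabla(\varrho,{\bf u})\|_{H^1}^2\in L^1(0,\infty)$ by \eqref{3.29}; for $s\in(\tfrac12,\tfrac32]$ one has $\tfrac3s\in[2,6)$, the factor $\|(\varrho,{\bf u})\|_{L^{3/s}}$ has an uncontrolled low-frequency part and is only uniformly bounded by \eqref{3.29}, so the Gronwall coefficient is merely $L^1_{\mathrm{loc}}$, giving $C(T)$. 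Note also that when you mention \eqref{2.7} you apply it to the unknown $(\varrho,{\bf u})$ rather than to the forcing; that is not what drives the argument.
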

\begin{proof}Applying $2^{-2sq}\Delta_q\eqref{3.46}_1$, $2^{-2sq}\Delta_q\eqref{3.46}_2$ by $\gamma2^{-2sq}\Delta_q \varrho$, $2^{-2sq}\Delta_q{\bf u}$, respectively, summing up,  integrating the resultant equation over
$\mathbb R^3$, and then using \eqref{2.7}, we obtain that
\begin{align}\label{3.53}
\frac{1}{2}
&\frac{\mathrm{d}}{\mathrm{d}t}\left(\gamma\norm{2^{-2sq}\Delta_q\varrho}_{L^2}^2+\norm{2^{-2sq}\Delta_q{\bf u}}_{L^2}^2\right)
+\mu\norm{2^{-2sq}\Delta_q(\nabla{\bf u})}_{L^2}^2+(\mu+\lambda)\norm{2^{-2sq}\Delta_q(\divv{\bf u})}_{L^2}^2\notag
\\ =&\gamma\int 2^{-2sq}\Delta_q\divv(\varrho{\bf u})2^{-2sq}\Delta_q\varrho\mathrm{d}{\bf x}+\int 2^{-2sq}\Delta_q\left(-\mathbf{u}\cdot\nabla\mathbf{u}-\gamma(\rho^{\gamma-1}-1)\nabla\varrho\right)2^{-2sq}\Delta_q{\bf u}\mathrm{d}{\bf x}\notag \\&+\int 2^{-2sq}\Delta_q\left(-\frac{\mu\varrho}{\rho}\Delta\mathbf{u}
-\frac{(\mu+\lambda)\varrho}{\rho}\nabla\divv\mathbf{u}\right)2^{-2sq}\Delta_q{\bf u}\mathrm{d}{\bf x}\notag
\\ \le &C\left\|\left(\divv (\varrho{\bf u}),-\mathbf{u}\cdot\nabla\mathbf{u}-\gamma(\rho^{\gamma-1}-1)\nabla\varrho,-\frac{\mu\varrho}{\rho}\Delta\mathbf{u}
-\frac{(\mu+\lambda)\varrho}{\rho}\nabla\divv\mathbf{u}\right)\right\|_{\dot B^{-s}_{2,1}}\notag
\\ &\times\left(\gamma\norm{2^{-2sq}\Delta_q\varrho}_{L^2}^2+\norm{2^{-2sq}\Delta_q{\bf u}}_{L^2}^2\right)\notag
\\ \le &C\left\|\left(\divv (\varrho{\bf u}),-\mathbf{u}\cdot\nabla\mathbf{u}-\gamma(\rho^{\gamma-1}-1)\nabla\varrho,-\frac{\mu\varrho}{\rho}\Delta\mathbf{u}
-\frac{(\mu+\lambda)\varrho}{\rho}\nabla\divv\mathbf{u}\right)\right\|_{L^\frac{6}{3+2s}}\notag
\\ &\times\left(\gamma\norm{2^{-2sq}\Delta_q\varrho}_{L^2}^2+\norm{2^{-2sq}\Delta_q{\bf u}}_{L^2}^2\right)\notag
\\ \le &C\|(\varrho,{\bf u})\|_{L^\frac{3}{s}}\|\nabla(\varrho,{\bf u})\|_{H^1}\left(\gamma\norm{2^{-2sq}\Delta_q\varrho}_{L^2}^2+\norm{2^{-2sq}\Delta_q{\bf u}}_{L^2}^2\right).
\end{align}
If $s\in(0,1/2]$, that is $\frac3s\ge 6$, then \eqref{3.51} follows from \eqref{3.53}, Sobolev inequality, and Gronwall inequality.
When $s\in(1/2,3/2]$, then $\frac3s\ge 2$, so \eqref{3.52} follows from \eqref{3.53}, Gronwall inequality, and \eqref{3.29}.
\end{proof}

\section{Proofs of Theorems \ref{thm1.1} and \ref{thm1.2}}\label{sec4}

With all the {\it a priori} estimates in Section \ref{sec3} at hand, we are ready to prove Theorems \ref{thm1.1} and \ref{thm1.2}.

{\bf Proof of Theorem \ref{thm1.1}.}
The desired \eqref{z1.8} and \eqref{1.9} follow from \eqref{3.5} and \eqref{3.29}, respectively.

To show \eqref{1.10}, in virtue of \eqref{3.29}, there exists a positive constant $T_9$ such that
\begin{align*}
\|\nabla\varrho(T_9)\|_{H^2}+\|\nabla{\bf u}(T_9)\|_{H^2}\le \delta,
\end{align*}
which together with \eqref{2.8} and \eqref{3.51} gives that
\begin{align*}
\|\varrho(T_9)\|_{L^2}+\|{\bf u}(T_9)\|_{L^2} \le C\|\nabla(\varrho,{\bf u})\|_{L^2}^\frac{s}{1+s}\|\nabla(\varrho,{\bf u})\|_{\dot B^{-s}_{2,\infty}}^\frac{1}{1+s}\le C\delta^\frac{s}{1+s},\ \text{if}\ s\in(0,1/2],
\\
\|\varrho(T_9)\|_{L^2}+\|{\bf u}(T_9)\|_{L^2} \le C\|\nabla(\varrho,{\bf u})\|_{L^2}^\frac{1}{3}\|\nabla(\varrho,{\bf u})\|_{\dot B^{-\frac{1}{2}}_{2,\infty}}^\frac{2}{3}\le C\delta^\frac{1}{3},\ \text{if}\ s\in(1/2,3/2].
\end{align*}
Therefore, we have
\begin{align}\label{3.54}
\|\varrho(T_9)\|_{L^2}+\|{\bf u}(T_9)\|_{L^2} \le \delta^\frac{1}{6},
\end{align}
provided that $\delta$ is properly small. Combining \eqref{3.54} with \eqref{3.51} and \eqref{3.52}, then using a family of scaled energy estimates with minimum derivative counts and interpolations among them developed by Guo and Wang \cite{GuoY}, we deduce \eqref{1.10}. The proof of Theorem \ref{thm1.1} is completed. \hfill$\Box$

{\bf Proof of Theorem \ref{thm1.2}.}
It follows from \eqref{1.1}$_1$ and the definition of $\sigma$ that
\begin{align}\label{4.1}
\sigma_t+{\bf u}\cdot\nabla \sigma+\gamma \sigma\divv {\bf u}=0.
\end{align}
Multiplying \eqref{4.1} by $6\sigma^5$ and integrating the resulting equality over $\mathbb R^3$, one has that
\begin{align*}
\frac{\mathrm{d}}{\mathrm{d}t}\norm{\sigma}_{L^6}^6=-(4\gamma-1)\int \sigma^6\divv {\bf u}\mathrm{d}{\bf x}-\gamma\int \sigma^5\divv {\bf u}\mathrm{d}{\bf x},
\end{align*}
which combined with \eqref{1.4}, \eqref{1.8}, \eqref{3.3}, and Young inequality yields that
\begin{align}\nonumber
\int_0^\infty\left|\frac{\mathrm{d}}{\mathrm{d}t}\norm{\sigma}_{L^6}^6\right|\mathrm{d}t\le C\int_0^\infty\big(\norm{\sigma}_{L^6}^6+\norm{\divv {\bf u}}_{L^2}^2\big)\mathrm{d}t
\le C,
\end{align}
As a result, we have
\begin{align*}
\lim\limits_{t\rightarrow\infty}\norm{\sigma}_{L^6}=0.
\end{align*}
This along with \eqref{3.1} implies that
\begin{align}\label{wz8}
\lim\limits_{t\rightarrow\infty}\|\rho-1\|_{L^6}=0,
\end{align}
which together with \eqref{1.7}, \eqref{1.8}, \eqref{3.3}, and the interpolation inequality indicates \eqref{1.14}.

On the other hand, thanks to \eqref{3.3}, one gets that
\begin{align}
\inf\limits_{{\bf x}\in \mathbb R^3}\rho({\bf x},t)
\le \inf\limits_{{\bf x}\in \mathbb R^3}\rho_0({\bf x})\text{e}^{\int_0^t\|\divv {\bf u}\|_{L^\infty}\mathrm{d}\tau}
\le \inf\limits_{{\bf x}\in \mathbb R^3}\rho_0({\bf x})\text{e}^{C\int_0^t(\|F\|_{L^\infty}+\|\sigma\|_{L^\infty})\mathrm{d}\tau}
\le \inf\limits_{{\bf x}\in \mathbb R^3}\rho_0({\bf x})\text{e}^{C(1+t)} = 0,\notag
\end{align}
as desired \eqref{1.15}. We complete the proof of Theorem \ref{thm1.2}. \hfill$\Box$

\section{Proof of Theorem \ref{thm1.3}}\label{sec5}

This section is devoted to proving Theorem \ref{thm1.3}.
To do so, we start with an algebraic decay rate of the total energy.
\begin{Lemma}\label{lemma5.1}
Under the assumptions of Theorem \ref{thm1.3}, there exists a positive constant $C$ independent of $t$ such that
\begin{align}\label{5.1}
\int\Big(\frac{1}{2}\rho|{\bf u}|^2+G(\rho)\Big)\mathrm{d}{\bf x}
\le C(1+t)^{-\frac{\gamma-1}{\gamma}}.
\end{align}
\end{Lemma}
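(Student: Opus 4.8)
The plan is to derive a closed differential inequality for a functional $\mathcal M_1(t)$ equivalent to the total energy $E(t)\triangleq\int(\frac12\rho|{\bf u}|^2+G(\rho))\,\mathrm d{\bf x}$, and then integrate the resulting ODE. First I would write down the classical energy identity: testing \eqref{1.1}$_2$ with ${\bf u}$ and using \eqref{1.1}$_1$ together with \eqref{1.6} gives
\begin{align*}
\frac{\mathrm d}{\mathrm dt}\int\Big(\frac12\rho|{\bf u}|^2+G(\rho)\Big)\mathrm d{\bf x}
+\mu\int|\nabla{\bf u}|^2\mathrm d{\bf x}+(\mu+\lambda)\int(\divv{\bf u})^2\mathrm d{\bf x}=0,
\end{align*}
which is valid because $\tilde\rho=0$ so $G(\rho)=\frac1{\gamma-1}P$ by \eqref{1.7} and $P(\tilde\rho)=0$; under $7\mu>\lambda$ the dissipation controls $\|\nabla{\bf u}\|_{L^2}^2$ from below up to a fixed constant. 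Thus $E(t)$ is nonincreasing and the real content is a lower bound on the dissipation in terms of a power of $E(t)$.

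The key interpolation step is to bound the potential-energy part $\int P\,\mathrm d{\bf x}=(\gamma-1)\int G(\rho)\,\mathrm d{\bf x}$ by the kinetic and dissipation terms. Here I would use the momentum equation \eqref{1.1}$_2$ tested against a suitable function (or equivalently the Bogovskii-type construction on the whole space, which is available since $\rho\in L^1$ and the far field is vacuum), together with the conservation of total mass $\int\rho\,\mathrm d{\bf x}=\int\rho_0\,\mathrm d{\bf x}=m_0$ and the $L^\infty$ bound \eqref{1.8}. The idea, following the strategy sketched after Theorem \ref{thm1.3}, is: choosing $\Phi$ with $\divv\Phi=P-\frac{1}{|\,\cdot\,|}(\cdots)$ is delicate in $\R^3$, so instead one multiplies \eqref{1.1}$_2$ by $\nabla(-\Delta)^{-1}(P)$ or exploits the effective viscous flux $F=(2\mu+\lambda)\divv{\bf u}-P$ from \eqref{1.4}; integrating $P$ against itself via $\int P^2 = \int P\big((2\mu+\lambda)\divv{\bf u}-F\big)$ and controlling $\int PF$ through \eqref{1.5} and the bound \eqref{3.3}-type estimates gives $\int P^2\,\mathrm d{\bf x}\le C(\|\nabla{\bf u}\|_{L^2}^2+\|\sqrt\rho\dot{\bf u}\|_{L^2}\|\cdot\|)$. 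Combining with Hölder, mass conservation and $\gamma>1$ to pass between $\int P\,\mathrm d{\bf x}$, $\int P^2\,\mathrm d{\bf x}$ and $\int\rho\,\mathrm d{\bf x}$ (interpolating $L^1\cap L^2$ control of $\rho^\gamma$ against the fixed $L^1$ mass of $\rho$ and the $L^\infty$ bound), one arrives at an estimate of the schematic form
\begin{align*}
\Big(\int G(\rho)\,\mathrm d{\bf x}\Big)^{\frac{2\gamma-1}{\gamma-1}}\le C\big(\|\nabla{\bf u}\|_{L^2}^2+\text{(controlled lower-order terms)}\big),
\end{align*}
the exponent $\frac{2\gamma-1}{\gamma-1}$ arising from tracking the powers of $\gamma$ in these interpolations. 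Adding the trivial bound $\big(\int\frac12\rho|{\bf u}|^2\big)^{\frac{2\gamma-1}{\gamma-1}}\le C\|\sqrt\rho{\bf u}\|_{L^2}^{2}\cdot E(t)^{\frac{\gamma}{\gamma-1}}$ and using that $E(t)\le E(0)$ is bounded, one gets $E(t)^{\frac{2\gamma-1}{\gamma-1}}\le C\,\big(-\frac{\mathrm d}{\mathrm dt}\mathcal M_1(t)\big)$ for $\mathcal M_1$ equivalent to $E$, hence
\begin{align*}
\frac{\mathrm d}{\mathrm dt}\mathcal M_1(t)+C\big(\mathcal M_1(t)\big)^{\frac{2\gamma-1}{\gamma-1}}\le 0.
\end{align*}

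Finally I would integrate this scalar ODE: with exponent $p=\frac{2\gamma-1}{\gamma-1}>1$, the inequality $y'\le -Cy^{p}$ gives $y(t)\le C(1+t)^{-1/(p-1)}$, and $\frac1{p-1}=\frac{\gamma-1}{\gamma}$, which is exactly the claimed rate \eqref{5.1} after using $\mathcal M_1\sim E$. The main obstacle I anticipate is the interpolation/Bogovskii step on the unbounded domain $\R^3$ with vacuum far field: one must control $\int P\,\mathrm d{\bf x}$ (a genuinely global quantity, finite only because $\rho\in L^1$) by the dissipation without a Poincaré inequality, and getting precisely the exponent $\frac{2\gamma-1}{\gamma-1}$ requires carefully combining the $L^1$ mass conservation, the uniform $L^\infty$ bound \eqref{1.8}, and the $L^2$–$L^6$ estimates for the effective viscous flux from Lemma \ref{Lemm2.2}; sign conditions like $7\mu>\lambda$ enter here to keep all the dissipation coefficients positive after absorbing cross terms.
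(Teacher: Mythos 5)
Your proposal follows essentially the same route as the paper. You use the basic energy identity, control $\|P\|_{L^2}^2$ by the dissipation modulo a total time derivative by pairing $P$ against the momentum equation via $\nabla(-\Delta)^{-1}P$ (equivalently, via the effective viscous flux; the paper uses the former, writing $P=\partial_t(-\Delta)^{-1}\divv(\rho{\bf u})+(2\mu+\lambda)\divv{\bf u}-\mathcal R_i\mathcal R_j(\rho u^iu^j)$ and absorbing the time-derivative term into the corrected functional $\mathcal M_1$), then interpolate $\int G(\rho)\,\mathrm d{\bf x}=\frac1{\gamma-1}\|P\|_{L^1}$ against $\|P\|_{L^2}$ using the conserved $L^1$ mass and the $L^\infty$ bound on $\rho$, and close a differential inequality $\mathcal M_1'+C\mathcal M_1^{(2\gamma-1)/(\gamma-1)}\le 0$ whose exponent yields the rate $(1+t)^{-(\gamma-1)/\gamma}$. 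Your treatment of the kinetic part (peeling off $E^{p-1}\le E(0)^{p-1}$ and then using $\|\sqrt\rho{\bf u}\|_{L^2}^2\le\|\rho\|_{L^{3/2}}\|{\bf u}\|_{L^6}^2\lesssim\|\nabla{\bf u}\|_{L^2}^2$) differs in detail from the paper's single three-factor H\"older in \eqref{5.11}, but gives the same conclusion.

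One remark: the hypothesis $7\mu>\lambda$ plays no role in this lemma. The basic energy dissipation is already coercive under the physical restrictions $\mu>0$, $2\mu+3\lambda\ge 0$ (which force $\mu+\lambda\ge\mu/3>0$), and the Bogovskii/Riesz-operator step needs nothing beyond $\mu>0$ either. The condition $7\mu>\lambda$ is inherited from the hypotheses of Theorem \ref{thm1.3} only because the subsequent decay of the pressure invokes the time-weighted estimates of Proposition \ref{proposition5.2}, cited from \cite{LiXin}, where that restriction on the Lam\'e coefficients is actually used.
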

\begin{proof}
Multiplying \eqref{1.1}$_1$ and \eqref{1.1}$_2$ by $G'(\rho)$ and ${\bf u}$, respectively, summing up and integrating the resultant over $\mathbb R^3$, we obtain that
\begin{align}\label{5.2}
\frac{\mathrm{d}}{\mathrm{d}t}\int\Big(\frac{1}{2}\rho|{\bf u}|^2+G(\rho)\Big)\mathrm{d}{\bf x}+\mu\norm{\nabla{\bf u}}_{L^2}^2+(\mu+\lambda)\norm{\nabla{\bf u}}_{L^2}^2=0,
\end{align}
which indicates that
\begin{align}\label{5.3}
\sup\limits_{0\le t<\infty}\int\Big(\frac{1}{2}\rho|{\bf u}|^2+G(\rho)\Big)\mathrm{d}{\bf x}+\int_0^\infty\big[\mu\norm{\nabla{\bf u}}_{L^2}^2+(\mu+\lambda)\norm{\nabla{\bf u}}_{L^2}^2\big]\mathrm{d}t
\le \int\Big(\rho_0|{\bf u}_0|^2+2G(\rho_0)\Big)\mathrm{d}{\bf x}.
\end{align}

Applying the operator $\Delta^{-1}\divv$ to \eqref{1.1}$_2$, we arrive at
\begin{align}\label{5.4}
P(\rho)=\partial_t(-\Delta)^{-1}\divv(\rho{\bf u})+(2\mu+\lambda)\divv{\bf u}-{\mathcal R}_i{\mathcal R}_j(\rho u^iu^j),
\end{align}
where ${\mathcal R}_i$ is the usual Riesz transform on $\mathbb R^3$: ${\mathcal R}_i=-(-\Delta)^{-\frac12}\partial_{x_i}$. Taking the $L^2$ product of \eqref{5.4} with $P(\rho)$, we infer that
\begin{align}\label{5.5}
\int P^2(\rho)\mathrm{d}{\bf x}=&\int\partial_t(-\Delta)^{-1}\divv(\rho{\bf u})P(\rho)\mathrm{d}{\bf x}+\int (2\mu+\lambda)\divv{\bf u}P(\rho)\mathrm{d}{\bf x}-\int {\mathcal R}_i{\mathcal R}_j(\rho u^iu^j)P(\rho)\mathrm{d}{\bf x}\notag\\\triangleq &~I_{1}+I_{2}+I_{3}.
\end{align}
We need to estimate each term $I_{i}\ (i=1,2,3)$. It follows from \eqref{1.1}$_{1}$ that
\begin{align*}
(P(\rho))_t+\divv[P(\rho){\bf u}]+(\gamma-1)P(\rho)\divv{\bf u}=0,
\end{align*}
which combined with \eqref{1.13}, \eqref{5.3}, the Marcinkiewicz multiplier theorem (see \cite[p. 96]{Stein}), \eqref{2.6}, H\"older inequality, and Cauchy--Schwarz inequality yields that
\begin{align}\label{5.7}
I_{1}=&\frac{\mathrm{d}}{\mathrm{d}t}\int(-\Delta)^{-1}\divv(\rho{\bf u})P(\rho)\mathrm{d}{\bf x}-\int(-\Delta)^{-1}\divv(\rho{\bf u})(P(\rho))_t\mathrm{d}{\bf x}\notag
\\=&\frac{\mathrm{d}}{\mathrm{d}t}\int (-\Delta)^{-1}\divv(\rho{\bf u})P(\rho)\mathrm{d}{\bf x}-\int \nabla(-\Delta)^{-1}\divv(\rho{\bf u})P(\rho){\bf u}\mathrm{d}{\bf x}\notag\\&+\int(-\Delta)^{-1}\divv(\rho{\bf u})(\gamma-1)P(\rho)\divv{\bf u}\mathrm{d}{\bf x}\notag\\ \le &\frac{\mathrm{d}}{\mathrm{d}t}\int (-\Delta)^{-1}\divv(\rho{\bf u})P(\rho)\mathrm{d}{\bf x}+C\big(\|\rho{\bf u}\|_{L^6}\|P(\rho)\|_{L^\frac{3}{2}}\|{\bf u}\|_{L^6}+\|\Delta^{-1}\divv(\rho{\bf u})\|_{L^6}\|P(\rho)\|_{L^3}\|\divv{\bf u}\|_{L^2}\big)\notag\\ \le &\frac{\mathrm{d}}{\mathrm{d}t}\int (-\Delta)^{-1}\divv(\rho{\bf u})P(\rho)\mathrm{d}{\bf x}+C\big(\|\rho\|_{L^\infty}\|P(\rho)\|_{L^\frac{3}{2}}\|{\bf u}\|_{L^6}^2+\|\rho{\bf u}\|_{L^2}\|P(\rho)\|_{L^3}\|\divv{\bf u}\|_{L^2}\big)\notag\\ \le &\frac{\mathrm{d}}{\mathrm{d}t}\int (-\Delta)^{-1}\divv(\rho{\bf u})P(\rho)\mathrm{d}{\bf x}+C\big(\|\rho\|_{L^\infty}\|P(\rho)\|_{L^\frac{3}{2}}\|{\bf u}\|_{L^6}^2 +\|\rho\|_{L^3}\|{\bf u}\|_{L^6}\|P(\rho)\|_{L^3}\|\divv{\bf u}\|_{L^2}\big)\notag\\ \le &\frac{\mathrm{d}}{\mathrm{d}t}\int_{\mathbb R^3}(-\Delta)^{-1}\divv(\rho{\bf u})P(\rho)\mathrm{d}{\bf x}+C\|\nabla{\bf u}\|^2_{L^2}.
\end{align}
Similarly, $I_{2}$ and $I_{3}$ can be handled as
\begin{align}\label{5.8}
&I_{2}\le \frac{1}{2}\|P(\rho)\|^2_{L^2}+C\|\nabla{\bf u}\|^2_{L^2},\\  \label{5.9}
&I_{3}\le C\|\rho u^iu^j\|_{L^3}\|P(\rho)\|_{L^\frac{3}{2}}\le C\|\rho\|_{L^\infty}\|{\bf u}\|_{L^6}^2\|P(\rho)\|_{L^\frac{3}{2}} \le C\|\nabla{\bf u}\|^2_{L^2}.
\end{align}
Hence, substituting \eqref{5.7}--\eqref{5.9} into \eqref{5.5} gives that
\begin{align}\label{5.10}
-\frac{\mathrm{d}}{\mathrm{d}t}\int (-\Delta)^{-1}\divv(\rho{\bf u})P(\rho)\mathrm{d}{\bf x}+\frac{1}{2}\|P(\rho)\|_{L^2}^2 \le C\|\nabla{\bf u}\|^2_{L^2}.
\end{align}

For $t\geq 0$, choose a positive constant $D_1$ and define the temporal energy functional
\begin{align*}
\mathcal M_1(t)=D_1\int\Big(\frac{1}{2}\rho|{\bf u}|^2\mathrm{d}{\bf x}+G(\rho)\Big)\mathrm{d}{\bf x}-\int(-\Delta)^{-1}\divv(\rho{\bf u})P(\rho)\mathrm{d}{\bf x}.
\end{align*}
By \eqref{1.8}, \eqref{5.3}, and $\|\rho\|_{L^1}=\|\rho_0\|_{L^1}$, we observe that
\begin{align*}
\int (-\Delta)^{-1}\divv(\rho{\bf u})P(\rho)\mathrm{d}{\bf x}\le &~C\|(-\Delta)^{-1}\divv(\rho{\bf u})\|_{L^6}\|P(\rho)\|_{L^\frac{6}{5}}\notag \\ \le&~C\|\rho{\bf u}\|_{L^2}\|P(\rho)\|_{L^\frac{6}{5}}\notag
\\ \le&~C\left(\|\sqrt{\rho}\|_{L^\infty}^6\|\sqrt{\rho}{\bf u}\|^6_{L^2}+\|P(\rho)\|_{L^1}\|P(\rho)\|_{L^\infty}^\frac{1}{5}\right)\notag \\ \le&~C\int\Big(\frac{1}{2}\rho|{\bf u}|^2+G(\rho)\Big)\mathrm{d}{\bf x},
\end{align*}
and
\begin{align}\label{5.11}
\int\Big(\frac{1}{2}\rho|{\bf u}|^2+G(\rho)\Big)\mathrm{d}{\bf x} &=\frac{1}{2}\int \rho^\frac{\gamma-1}{2\gamma-1}(\sqrt\rho|{\bf u}|)^\frac{2\gamma}{2\gamma-1}|{\bf u}|^\frac{2(\gamma-1)}{2\gamma-1}\mathrm{d}{\bf x}+\int G(\rho)\mathrm{d}{\bf x}\notag\\ &\le C\|\rho\|_{L^\frac{3}{2}}^\frac{\gamma-1}{2\gamma-1}\|\sqrt{\rho}{\bf u}\|_{L^2}^\frac{2\gamma}{2\gamma-1}\|{\bf u}\|_{L^6}^\frac{2(\gamma-1)}{2\gamma-1}
+C\|\rho\|_{L^1}^\frac{\gamma}{2\gamma-1}\|\rho\|_{L^{2\gamma}}
^\frac{2\gamma(\gamma-1)}{2\gamma-1}
\notag \\ &\le C\|(P(\rho),\nabla {\bf u})\|^\frac{2(\gamma-1)}{2\gamma-1}_{L^2}.
\end{align}
Thus, if $D_1$ is chosen large enough, one has that
\begin{align}
\mathcal M_1(t)\sim\int\Big(\frac{1}{2}\rho|{\bf u}|^2+G(\rho)\Big)\mathrm{d}{\bf x}\nonumber.
\end{align}

Taking a linear combination of \eqref{5.2} and \eqref{5.10}, and using \eqref{5.11}, we see that there exists a positive constant $C$ independent of $t$ such that
\begin{align*}
\frac{\mathrm{d}}{\mathrm{d}t}\mathcal M_1(t)+C(\mathcal M_1(t))^\frac{2\gamma-1}{\gamma-1}\le 0.
\end{align*}
Solving the above inequality directly yields that
\begin{align*}
\mathcal M_1(t)\le C(1+t)^{-\frac{\gamma-1}{\gamma}},
\end{align*}
as desired \eqref{5.1}.
\end{proof}

For $\delta>0$, it infers from \eqref{5.1} and \eqref{5.3} that there exists a positive constant $T_{10}$ such that
\begin{align}\label{5.16}
\int\Big(\frac{1}{2}\rho|{\bf u}|^2+G(\rho)\Big)(T_{10})\mathrm{d}{\bf x}+\norm{\nabla{\bf u}(T_{10})}_{L^2}\le \delta.
\end{align}
The following Proposition \ref{proposition5.2} (see \cite[Theorem 1.3]{LiXin}) deals with the large-time asymptotic behavior of global classical solutions.
\begin{Proposition}\label{proposition5.2}
Under the assumptions of Theorem \ref{thm1.3}, it holds that
\begin{align}
\sup\limits_{T_{10}\le t<\infty}\big[t\big(\norm{\nabla{\bf u}}_{L^2}^2+\norm{P}_{L^2}^2\big)\big]+\int_{T_{10}}^\infty t\big(\|\sqrt{\rho}\dot{\bf u}\|_{L^2}^2+\|P\|_{L^3}^3\big)\mathrm{d}t\le C,\label{5.13}
\\
\sup\limits_{T_{10}\le t<\infty}\big[t^2\big(\|\sqrt{\rho}\dot{\bf u}\|_{L^2}^2+\|P\|_{L^3}^3\big)\big]+\int_{T_{10}}^\infty t^2\big(\norm{\nabla{\dot{\bf u}}}_{L^2}^2+\norm{P}^4_{L^4}\big)\mathrm{d}t\le C.\label{5.14}
\end{align}
\end{Proposition}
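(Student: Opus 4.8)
The plan is to run the time-weighted energy method of Hoff and Huang--Li--Xin (as in \cite{LiXin}), using as inputs the global energy bound \eqref{5.3}, the pressure estimate contained in \eqref{5.10}, and the smallness \eqref{5.16} of the first-order energy at $t=T_{10}$; all time integrations below are over subintervals of $[T_{10},\infty)$, and $T_{10}$ may be enlarged so that the $L^1_t$-tails $\int_{T_{10}}^\infty\|\nabla{\bf u}\|_{L^2}^2\,\mathrm{d}t$ and $\int_{T_{10}}^\infty\|P\|_{L^2}^2\,\mathrm{d}t$ are as small as we wish. Since $\tilde\rho=0$ we have $F=(2\mu+\lambda)\divv{\bf u}-P$ and $G(\rho)=\frac{1}{\gamma-1}P$, so Lemma \ref{Lemm2.2} applies with $P-P(\tilde\rho)=P$, and \eqref{1.8} lets one pass freely between $\|\rho\dot{\bf u}\|_{L^2}$ and $\|\sqrt\rho\dot{\bf u}\|_{L^2}$.

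\textbf{First level (weight $t$).} First I would test \eqref{1.1}$_2$ against $\dot{\bf u}$ and use the pressure transport equation $(P(\rho))_t+\divv(P(\rho){\bf u})+(\gamma-1)P(\rho)\divv{\bf u}=0$ to handle the pressure work, reaching an inequality of the schematic form
\begin{align*}
\frac{\mathrm{d}}{\mathrm{d}t}B_1(t)+\int\rho|\dot{\bf u}|^2\,\mathrm{d}{\bf x}\le C\|\nabla{\bf u}\|_{L^3}^3+C\|\nabla{\bf u}\|_{L^2}^2+C\|P\|_{L^2}^2,
\end{align*}
with $B_1(t)=\frac{\mu}{2}\|\nabla{\bf u}\|_{L^2}^2+\frac{\mu+\lambda}{2}\|\divv{\bf u}\|_{L^2}^2-\int P\divv{\bf u}\,\mathrm{d}{\bf x}$. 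The cubic term is the crux: combining $\|\nabla{\bf u}\|_{L^3}\le C(\|F\|_{L^3}+\|{\bf w}\|_{L^3}+\|P\|_{L^3})$ with the case $p=3$ of \eqref{2.3}, namely $\|F\|_{L^3}+\|{\bf w}\|_{L^3}\le C\|\rho\dot{\bf u}\|_{L^2}^{1/2}(\|\nabla{\bf u}\|_{L^2}+\|P\|_{L^2})^{1/2}$, and Young's inequality yields $\|\nabla{\bf u}\|_{L^3}^3\le\varepsilon\|\sqrt\rho\dot{\bf u}\|_{L^2}^2+C_\varepsilon(\|\nabla{\bf u}\|_{L^2}^2+\|P\|_{L^2}^2)^3+C\|P\|_{L^3}^3$, the $\varepsilon$-term going into the dissipation (the viscosity restriction $7\mu>\lambda$ is what makes this scheme consistent). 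Multiplying by $t$, integrating on $[T_{10},t]$, and using $tB_1(T_{10})\le C\delta$ from \eqref{5.16}, $\int_{T_{10}}^tB_1(\tau)\,\mathrm{d}\tau\le C$ from \eqref{5.3} and \eqref{5.10}, and $\int_{T_{10}}^\infty t\|P\|_{L^3}^3\,\mathrm{d}t\le C$ (obtained by testing the Helmholtz identity \eqref{5.4} against $t\,P^2$ and invoking the pressure transport equation and the Riesz-transform bounds), I would close the estimate by a continuity argument: on a maximal interval where the running supremum $\sup_{T_{10}\le s\le t}s(\|\nabla{\bf u}\|_{L^2}^2+\|P\|_{L^2}^2)$ stays below a fixed constant, the cubic remainder $\int_{T_{10}}^t\tau(\|\nabla{\bf u}\|_{L^2}^2+\|P\|_{L^2}^2)^3\,\mathrm{d}\tau$ is dominated by that supremum times $C\int_{T_{10}}^\infty(\|\nabla{\bf u}\|_{L^2}^2+\|P\|_{L^2}^2)\,\mathrm{d}\tau$, which is small for $T_{10}$ large and hence absorbed. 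This delivers $\sup_{t\ge T_{10}}t(\|\nabla{\bf u}\|_{L^2}^2+\|P\|_{L^2}^2)\le C$ and $\int_{T_{10}}^\infty t\|\sqrt\rho\dot{\bf u}\|_{L^2}^2\,\mathrm{d}t\le C$, that is, \eqref{5.13}.

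\textbf{Second level (weight $t^2$).} Next I would apply the material-derivative operator $\partial_t+{\bf u}\cdot\nabla$ to \eqref{1.1}$_2$, test against $\dot{\bf u}$, and use $\dot P=-\gamma P\divv{\bf u}$ to obtain the standard second-order identity
\begin{align*}
\frac{\mathrm{d}}{\mathrm{d}t}\Big(\frac12\int\rho|\dot{\bf u}|^2\,\mathrm{d}{\bf x}\Big)+\mu\|\nabla\dot{\bf u}\|_{L^2}^2+(\mu+\lambda)\|\divv\dot{\bf u}\|_{L^2}^2\le C\|\nabla{\bf u}\|_{L^4}^4+\text{(cross terms)},
\end{align*}
where $\|\nabla{\bf u}\|_{L^4}^4$ is controlled via \eqref{2.3}--\eqref{2.4} (with $p=4$) and the Gagliardo--Nirenberg inequality of Lemma \ref{1interpolation}, so that only a small multiple of $\|\nabla\dot{\bf u}\|_{L^2}^2$ (absorbed using $7\mu>\lambda$) plus quantities $\|\sqrt\rho\dot{\bf u}\|_{L^2}^2$, $\|\nabla{\bf u}\|_{L^2}^2$, $\|P\|_{L^4}^4$, $\|P\|_{L^6}$ remain, and the cross terms are treated by Young's inequality together with the first-level bounds \eqref{5.13}. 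Multiplying by $t^2$ and integrating, the term $\int_{T_{10}}^t\tau\|\sqrt\rho\dot{\bf u}\|_{L^2}^2\,\mathrm{d}\tau$ produced by $\frac{\mathrm{d}}{\mathrm{d}t}(t^2\,\cdot\,)$ is bounded by $C$ from \eqref{5.13}, the boundary term $t^2\|\sqrt\rho\dot{\bf u}(T_{10})\|_{L^2}^2$ is controlled via the compatibility condition \eqref{1.13} and \eqref{5.16} (after shrinking $\delta$), and the remaining contributions are absorbed by the same continuity argument as before. This gives $\sup_{t\ge T_{10}}t^2\|\sqrt\rho\dot{\bf u}\|_{L^2}^2\le C$ and $\int_{T_{10}}^\infty t^2\|\nabla\dot{\bf u}\|_{L^2}^2\,\mathrm{d}t\le C$; the pressure bounds $\sup_{t\ge T_{10}}t^2\|P\|_{L^3}^3\le C$ and $\int_{T_{10}}^\infty t^2\|P\|_{L^4}^4\,\mathrm{d}t\le C$ then follow once more from \eqref{5.4}, the pressure transport equation, and the Riesz/Marcinkiewicz bounds. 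This is \eqref{5.14}.

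\textbf{Where the difficulty lies.} The heart of the argument is the treatment of the supercritical gradient nonlinearities $\|\nabla{\bf u}\|_{L^3}^3$ and $\|\nabla{\bf u}\|_{L^4}^4$: via the effective-viscous-flux and vorticity estimates of Lemma \ref{Lemm2.2} each must be split into a small multiple of the dissipation ($\|\sqrt\rho\dot{\bf u}\|_{L^2}^2$ at the first level, $\|\nabla\dot{\bf u}\|_{L^2}^2$ at the second), which is precisely what forces the viscosity restriction $7\mu>\lambda$, plus a remainder that is harmless only because it is weighted by the decaying, $L^1_t$-small energy quantities past $T_{10}$ — and it is exactly here that the smallness \eqref{5.16}, inherited from the decay estimate of Lemma \ref{lemma5.1}, is indispensable. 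Once this bookkeeping is set up, each of the two levels closes by a standard continuity (bootstrap) argument, and the higher-order pressure estimates drop out of the Helmholtz decomposition \eqref{5.4}.
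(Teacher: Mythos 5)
The paper does not actually prove this proposition: it is quoted verbatim from \cite[Theorem 1.3]{LiXin}, with the smallness at time $T_{10}$ provided by \eqref{5.16} standing in for the small-energy hypothesis of that theorem. Your sketch is a faithful outline of precisely that time-weighted energy argument (Hoff-type estimate for $\dot{\bf u}$, effective-viscous-flux/vorticity interpolation to absorb $\|\nabla{\bf u}\|_{L^3}^3$ and $\|\nabla{\bf u}\|_{L^4}^4$ into the dissipation, Helmholtz identity \eqref{5.4} for the pressure, and a bootstrap past $T_{10}$), so it coincides with the proof the paper delegates to the cited reference.
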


Now we are in a position to prove Theorem \ref{thm1.3}.

{\bf Proof of Theorem \ref{thm1.3}}.
By \eqref{1.1}$_1$ and the definition of  $F$ in \eqref{1.4}, one gets that
\begin{align}\label{5.15}
\frac{\mathrm{d}}{\mathrm{d}t}\rho+\frac{1}{2\mu+\lambda}\rho^{\gamma+1}
=-\frac{1}{2\mu+\lambda}\rho F,
\end{align}
Multiplying \eqref{5.15} by $\gamma t^2 \rho^{\gamma-1}$ indicates that
\begin{align*}
\frac{\mathrm{d}}{\mathrm{d}t}(t^2\rho^\gamma)+\frac{\gamma}{2\mu+\lambda}t^2\rho^{2\gamma}
=2t\rho^\gamma-\frac{\gamma}{2\mu+\lambda}t^2\rho^\gamma F,
\end{align*}
which implies that
\begin{align}\label{wz4}
t^2\rho^\gamma(t)+\frac{\gamma}{2\mu+\lambda}\int_{T_{10}}^t\tau^2\rho^{2\gamma}\mathrm{d}\tau= T_{10}^2\rho^\gamma(T_{10})+ 2\int_{T_{10}}^t\tau\rho^{\gamma}\mathrm{d}\tau-\frac{\gamma}{2\mu+\lambda}\int_{T_{10}}^t \tau^2\rho^\gamma F\mathrm{d}\tau.
\end{align}

By Cauchy--Schwarz inequality, we have
\begin{align}\label{5.17}
\int_{T_{10}}^t\tau\rho^{\gamma}\mathrm{d}\tau \le \frac{\gamma}{2(2\mu+\lambda)}\int_{T_{10}}^t\tau^2\rho^{2\gamma}\mathrm{d}\tau +C\int_{T_{10}}^t \mathrm{d}\tau\le \frac{\gamma}{2(2\mu+\lambda)}\int_{T_{10}}^t\tau^2\rho^{2\gamma}\mathrm{d}\tau +C t.
\end{align}
It follows from \eqref{2.2}, \eqref{2.3}, \eqref{2.6}, \eqref{5.13}, and \eqref{5.14} that
\begin{align}
\int_{T_{10}}^t \tau^2\rho^\gamma F\mathrm{d}\tau &\le  \frac{\gamma}{2(2\mu+\lambda)}\int_{T_{10}}^t\tau^2\rho^{2\gamma}\mathrm{d}\tau+C\int_{T_{10}}^t \tau^2\norm{F}_{L^\infty}^2\mathrm{d}t\notag
\\ &\le \frac{\gamma}{2(2\mu+\lambda)}\int_{T_{10}}^t\tau^2\rho^{2\gamma}\mathrm{d}\tau+\int_{T_{10}}^t \tau^2 \norm{F}_{L^6}\norm{\nabla F}_{L^6}\mathrm{d}t\notag
\\ &\le \frac{\gamma}{2(2\mu+\lambda)}\int_{T_{10}}^t\tau^2\rho^{2\gamma}\mathrm{d}\tau+C\int_{T_{10}}^t \tau^2\norm{\rho\dot{\bf u}}_{L^2}\norm{\rho\dot{\bf u}}_{L^6}\mathrm{d}t\notag
\\ &\le\frac{\gamma}{2(2\mu+\lambda)}\int_{T_{10}}^t\tau^2\rho^{2\gamma}\mathrm{d}\tau+ C\int_{T_{10}}^t \tau \norm{\nabla\dot{\bf u}}_{L^2}\mathrm{d}t\notag
\\ &\le \frac{\gamma}{2(2\mu+\lambda)}\int_{T_{10}}^t\tau^2\rho^{2\gamma}\mathrm{d}\tau+Ct.\label{5.18}
\end{align}
Substituting \eqref{5.17} and \eqref{5.18} into \eqref{wz4}, we obtain that
\begin{align*}
\norm{P(\cdot,t)}_{L^\infty}\le C(1+t)^{-1},
\end{align*}
which together with \eqref{5.1} and H\"older inequality leads to
\begin{align*}
\norm{P(\cdot,t)}_{L^p}\le\norm{P(\cdot,t)}_{L^1}^\frac{1}{p}\norm{P(\cdot,t)}_{L^\infty}^{1-\frac{1}{p}}\le C(1+t)^{-1+\frac{1}{p\gamma}},\ \ \text{for any}\ 1\leq p\leq \infty,
\end{align*}
as desired \eqref{1.16}.

Next, one infers from \eqref{1.16}, \eqref{5.13}, and H\"older inequality that
\begin{align*}
\int \rho|{\bf u}|^2\mathrm{d}{\bf x}
\le \norm{\rho}_{L^\frac{3}{2}}\norm{\bf u}_{L^6}^2
\le
C(1+t)^{-1}\norm{\rho}_{L^1}^{\frac{2}{3}}
\norm{\rho }_{L^\infty}^{\frac{1}{3}}\le C(1+t)^{-1-\frac{1}{3\gamma}},
\end{align*}
as desired \eqref{1.17}. \hfill$\Box$

\section*{Conflict of interest}
The authors declare that they have no conflict of interest.
	
\section*{Data availability}
Data sharing is not applicable to this article as no new data were created or analyzed in this study.


\begin{thebibliography}{10}

\renewcommand\refname{References}
\renewenvironment{thebibliography}[1]{%
\section*{\refname}
\list{{\arabic{enumi}}}{\def\makelabel##1{\hss{##1}}\topsep=0mm
\parsep=0mm
\partopsep=0mm\itemsep=0mm
\labelsep=1ex\itemindent=0mm
\settowidth\labelwidth{\small[#1]}%
\leftmargin\labelwidth \advance\leftmargin\labelsep
\advance\leftmargin -\itemindent
\usecounter{enumi}}\small
\def\newblock{\ }
\sloppy\clubpenalty4000\widowpenalty4000
\sfcode`\.=1000\relax}{\endlist}
\bibliographystyle{model1b-num-names}








\bibitem{CL23}
G. Cai and J. Li, Existence and exponential growth of global classical solutions to the compressible Navier--Stokes equations with slip boundary conditions in 3D bounded domains, {\it Indiana Univ. Math. J.}, {\bf 72} (2023), 2491--2546.

\bibitem{Dan3}
F. Charve and R. Danchin, A global existence result for the
compressible Navier--Stokes equations in the critical $L^p$
framework, {\it Arch. Ration. Mech. Anal.}, {\bf 198} (2010), 233--271.

\bibitem{CMZ}
Q. Chen, C. Miao, and Z. Zhang,
Global well--posedness for compressible Navier--Stokes equations with highly oscillating initial velocity, {\it Comm. Pure Appl. Math.}, {\bf 63} (2010), 1173--1224.

\bibitem{ChoKim}
Y. Cho and H. Kim, On classical solutions of the compressible Navier--Stokes equations with non-negative initial densities, {\it Manuscripta Math.}, {\bf 120} (2006), 91--129.



\bibitem{Dan1}
R. Danchin, Global existence in critical spaces for compressible
Navier--Stokes equations, {\it Invent. Math.}, {\bf 141} (2000), 579--614.

\bibitem{DM17}
R. Danchin and P. B. Mucha, Compressible Navier--Stokes system: large solutions and incompressible limit, {\it Adv. Math.}, {\bf320} (2017), 904--925.

\bibitem{DM23}
R. Danchin and P. B. Mucha, Compressible Navier--Stokes equations with ripped density, {\it Comm. Pure Appl. Math.}, {\bf 76} (2023), 3437--3492.

\bibitem{Dan4}
R. Danchin and J. Xu, Optimal time-decay estimates for the compressible Navier--Stokes equations in the critical $L^p$ framework, {\it Arch. Ration. Mech. Anal.}, {\bf 224} (2017), 53--90.

\bibitem{Desjardins}
B. Desjardins, Regularity of weak solutions of the compressible isentropic Navier--Stokes equations, {\it Comm. Partial Differential Equations}, {\bf 22} (1997), 977--1008.

\bibitem{DiPerna1}
R. J. DiPerna and P.-L. Lions, Ordinary differential equations, transport theory and Sobolev spaces, {\it Invent. Math.}, {\bf 98} (1989), 511--547.

\bibitem{Duan1}
R. Duan, S. Ukai, Y. Yang, and H. Zhao, Optimal convergence rates for the compressible Navier--Stokes equations with potential forces, {\it Math. Models Methods Appl. Sci.}, {\bf 17} (2007), 737--758.

\bibitem{FZ18}
D. Fang, T. Zhang, and R. Zi, Global solutions to the isentropic compressible Navier--Stokes equations with a class of large initial data,
{\it SIAM J. Math. Anal.}, {\bf 50} (2018), 4983--5026.


\bibitem{Feireisl}
E. Feireisl, A. Novotn\'y, and H. Petzeltov\'a, On the existence of globally defined weak solutions to the Navier--Stokes equations, {\it J. Math. Fluid Mech.}, {\bf 3} (2001), 358--392.

\bibitem{Feireisl2}
E. Feireisl and H. Petzeltov\'a, Large-time behavior of solutions to the Navier--Stokes equations of compressible flow,
{\it Arch. Ration. Mech. Anal.}, {\bf 150} (1999), 77--96.

\bibitem{GuoY}
Y. Guo and Y. Wang, Decay of dissipative equations and negative Sobolev spaces, {\it Comm. Partial Differential Equations}, {\bf 37} (2012), 2165--2208.

\bibitem{H11}
B. Haspot, Existence of global strong solutions in critical spaces for barotropic viscous fluids, {\it Arch. Ration. Mech. Anal.}, {\bf 202} (2011), 427--460.

\bibitem{He}
L. He, J. Huang, and C. Wang, Global stability of large solutions to the 3D compressible Navier--Stokes equations, {\it Arch. Ration. Mech. Anal.}, {\bf 234} (2019), 1167--1222.

\bibitem{Hoff1}
D. Hoff, Global solutions of the Navier--Stokes equations for multidimensional
compressible flow with discontinuous initial data, {\it J. Differential Equations}, {\bf 120} (1995), 215--254.

\bibitem{Hoff2}
D. Hoff, Strong convergence to global solutions for multidimensional flows of compressible, viscous fluids with polytropic equations of state and discontinuous initial data, {\it Arch. Ration. Mech. Anal.}, {\bf 132} (1995), 1--14.

\bibitem{HZ95}
D. Hoff and K. Zumbrun, Multi-dimensional diffusion waves for the Navier--Stokes equations of compressible flow, {\it Indiana Univ. Math. J.}, {\bf 44} (1995), 603--676.

\bibitem{HPZ24}
G. Hong, X. Hou, H. Peng, and C. Zhu, Global existence for a class of large solution to compressible Navier--Stokes equations with vacuum, {\it Math. Ann.}, {\bf 388} (2024), 2163--2194.

\bibitem{Hu}
X. Hu, Hausdorff dimension of concentration for isentropic compressible Navier--Stokes equations, {\it Arch. Ration. Mech. Anal.}, {\bf 234} (2019), 375--416.

\bibitem{Hu1}
X. Hu, Weak solutions for compressible isentropic Navier--Stokes equations in dimensions three, {\it Arch. Ration. Mech. Anal.}, {\bf 242} (2021),  1907--1945.


\bibitem{HLZ11}
X. Huang, J. Li, and Z. Xin, Serrin-type criterion for the three-dimensional viscous compressible flows, {\it SIAM J. Math. Anal.}, {\bf 43} (2011), 1872--1886.

\bibitem{HuangLi1}
X. Huang, J. Li, and Z. Xin, Global well--posedness of classical solutions with large oscillations and vacuum to the three-dimensional isentropic
compressible Navier--Stokes equaitons, {\it Commun. Pure Appl. Math.}, {\bf 65} (2012), 549--585.

\bibitem{Jiang-Zhang2001}
S. Jiang and P. Zhang, On spherically symmetric solutions of the compressible isentropic Navier--Stokes equations, {\it Comm. Math. Phys.}, {\bf 215} (2001), 559--581.

\bibitem{Jiang1}
S. Jiang and P. Zhang, Axisymmetric solutions of the 3D Navier--Stokes equations for compressible isentropic fluids, {\it J. Math. Pures Appl.},
{\bf 82} (2003), 949--973.

\bibitem{KK02}
Y. Kagei and T. Kobayashi, On large-time behavior of solutions to the compressible Navier--Stokes equations in the half space in $\mathbb R^3$,
{\it Arch. Ration. Mech. Anal.}, {\bf 165} (2002), 89--159.

\bibitem{KK05}
Y. Kagei and T. Kobayashi, Asymptotic behavior of solutions of the compressible Navier--Stokes equations on the half space, {\it Arch. Ration. Mech. Anal.}, {\bf 177} (2005), 231--330.

\bibitem{LG}
G. Leoni, A first course in Sobolev spaces, 2nd ed., American Mathematical Society, Providence, RI, 2017.


\bibitem{LiXin}
J. Li and Z. Xin, Global well--posedness and large time asymptotic behavior of classical solutions to the compressible Navier--Stokes equations with vacuum, {\it Ann. PDE}, {\bf 5} (2019), Paper No. 7, 37 pp.


\bibitem{Lions}
P.-L. Lions, Mathematical topics in fluid mechanics. Vol. 2. Compressible
models, Oxford University Press, New York, 1998.

\bibitem{LiuWang}
T. Liu and W. Wang, The pointwise estimates of diffusion wave for the Navier--Stokes systems in odd multi-dimension, {\it Commun. Math. Phys.}, {\bf 196} (1998), 145--173.

\bibitem{Matsumura3}
A. Matsumura and T. Nishida, The initial value problem for the equations of motion of viscous and heat-conductive gases, {\it J. Math. Kyoto Univ.}, {\bf 20} (1980), 67--104.

\bibitem{Matsumura2}
A. Matsumura and T. Nishida, Initial boundary value problems for the equations of motion of compressible viscous and heat-conductive fluids, {\it Comm. Math. Phys.}, {\bf 89} (1983), 445--464.

\bibitem{Merle1}
F. Merle, P. Rapha\"el, I. Rodnianski, and J. Szeftel, On the implosion of a compressible fluid I: smooth self-similar inviscid profiles,
{\it Ann. of Math.}, {\bf 196} (2022), 567--778.

\bibitem{Merle2}
F. Merle, P. Rapha\"el, I. Rodnianski, and J. Szeftel, On the implosion of a compressible fluid II: singularity formation, {\it Ann. of Math.}, {\bf 196} (2022), 779--889.


\bibitem{Padula}
M. Padula, On the exponential stability of the rest state of a viscous compressible fluid, {\it J. Math. Fluid Mech.}, {\bf 1} (1999), 62--77.

\bibitem{PSW}
Y. Peng, X. Shi, and Y. Wu, Exponential decay for Lions-Feireisl's weak solutions to the barotropic compressible Navier--Stokes equations in 3D bounded domains, {\it Indiana Univ. Math. J.}, {\bf 70} (2021), 1813--1831.



\bibitem{Sohinger1}
V. Sohinger and R. M. Strain, The Boltzmann equation, Besov spaces, and optimal time decay rates in $\mathbb R_x^n$, {\it Adv. Math.}, {\bf 261} (2014), 274--332.

\bibitem{Stein}
E. M. Stein, Singular integrals and differentiability properties of functions, Princeton University Press, Princeton, NJ, 1970.


\bibitem{Sun}
Y. Sun, C. Wang, and Z. Zhang, A Beale-Kato-Majda blow-up criterion for the 3-D compressible Navier--Stokes equations, {\it J. Math. Pures Appl.}, {\bf 95} (2011), 36--47.

\bibitem{WLZ}
G. Wu, L. Yao, and Y. Zhang, Global stability and nonvanishing vacuum states of 3D compressible Navier--Stokes equations,
\emph{SIAM J. Math. Anal.}, \textbf{55} (2023), 882--899.

\bibitem{ZLZ20}
X. Zhai, Y. Li, and F. Zhou, Global large solutions to the three dimensional compressible Navier--Stokes equations, {\it SIAM J. Math. Anal.}, {\bf52} (2020), 1806--1843.



\end{thebibliography}
\end{document}